\definecolor{darkblue}{RGB}{0,0,160}
\tikzstyle{invisivertex} = [black, shape=rectangle, minimum size=0pt, inner sep=3pt]
\tikzstyle{circledvertex} = [violet!80, draw, shape=circle, minimum size=0pt, inner sep=3pt]
\tikzstyle{dot} = [black, fill, shape=circle, minimum size=3pt, inner sep=2pt]
\tikzstyle{opendot} = [draw, black, shape=circle, minimum size=6pt, inner sep=1pt]
\tikzstyle{edgelabel} = [magenta, shape=circle, minimum size=6pt, inner sep=1pt]
\newcommand{\ie}{{\textit{i.e.}}}
\newcommand{\cA}{\mathcal{A}}
\newcommand{\cL}{\mathcal{L}}
\newcommand{\Z}{\mathbb{Z}}
\newcommand{\QQ}{\mathbb{Q}}
\newcommand{\cC}{\mathcal{C}}
\newcommand{\cD}{\mathcal{D}}
\newcommand{\cM}{\mathcal{M}}
\newcommand{\idInt}{\mathsf{IL}^{\ast}}
\newcommand{\Int}{\mathsf{IL}}
\newcommand{\aaa}{\mathbf{a}}
\newcommand{\bbb}{\mathbf{b}}
\newcommand{\ccc}{\mathbf{c}}
\newcommand{\ddd}{\mathbf{d}}
\newcommand{\Poin}{\mathcal{\sf Poin}}
\newcommand{\POIN}[3][]{\Poin_{#1}(#2; #3)} 
\newcommand{\peak}{{\sf Peak}}
\newcommand{\Des}{{\sf Des}}
\newcommand{\LinExt}{{\sf LinExt}}
\newcommand{\supp}{{\sf supp}}
\newcommand{\StatMon}{{\sf u}}
\newcommand{\mon}{{\sf m}}
\newcommand{\incdec}{{\sf Alt}}
\newcommand{\PIE}{{\sf A}}
\newcommand{\topPIE}{{\sf B}}
\newcommand{\rank}{{\sf rank}}
\newcommand{\irank}{{\sf irank}}
\newcommand{\Rank}{{\sf Rank}}
\newcommand{\IRank}{{\sf IRank}}
\newcommand{\wt}{{\sf wt}}
\newcommand{\wtn}{\wt^+}
\newcommand{\wtp}{\wt^-}
\newcommand{\N}{\mathbb{N}}
\newcommand{\Num}{{\sf Num}}
\newcommand{\TT}[2]{{\sf T}(#1, #2)} 
\newcommand{\bout}{\operatorname{b-out}}
\newcommand{\NN}{\mathbb N}
\newcommand{\des}{\mathsf{Des}}
\newcommand{\m}{\mathsf{m}}
\newcommand{\multichain}[1]{\{\!\!\{#1\}\!\!\}}
\newcommand{\extPsi}{{\text{\scriptsize\sf ex}\hspace*{-1pt}}\Psi}
\newcommand{\pullPsi}{\Psi_{\sf pull}}
\newcommand{\combextPsi}{{\text{\scriptsize\sf cx}\hspace*{-1pt}}\Psi}
\newcommand{\combpullPsi}{\Psi_{\sf cpull}}
\newcommand{\combPsi}{\Psi_{\sf c}}
\newcommand{\QSym}{\textsf{QSym}}
\newcommand{\Dfn}[1]{\emph{\bfseries #1}}
\newtheorem{lemma}{Lemma}
\newtheorem{proposition}[lemma]{Proposition}
\newtheorem{thm}[lemma]{Theorem}
\newtheorem{corollary}[lemma]{Corollary}
\newtheorem{conjecture}[lemma]{Conjecture}
\theoremstyle{definition}
\newtheorem{definition}[lemma]{Definition}
\newtheorem{to do}[lemma]{To Do}
\newtheorem{remark}[lemma]{Remark}
\newtheorem{example}[lemma]{Example}
\numberwithin{lemma}{section}
\title{The Poincaré-extended $\aaa\bbb$-index}
\date{\today}
\author[G.~Dorpalen-Barry]{Galen Dorpalen-Barry$^\star$}
\address[G.~Dorpalen-Barry]{Texas A\&M University, USA}
\email{dorpalen-barry@tamu.edu}
\author[J.~Maglione]{Joshua Maglione$^\dagger$}
\address[J.~Maglione]{University of Galway, Ireland}
\email{joshua.maglione@universityofgalway.ie}
\author[C.~Stump]{Christian Stump $^\ddagger$}
\address[C.~Stump]{Ruhr-Universit\"at Bochum, Germany}
\email{christian.stump@rub.de}
\let\@wraptoccontribs\wraptoccontribs
\subjclass[2020]{Primary 06A07; Secondary 05B35, 52C40}
\begin{document}

\begin{abstract}
  Motivated by a conjecture concerning Igusa local zeta functions for intersection posets of hyperplane arrangements, we introduce and study the \emph{Poincaré-extended $\aaa\bbb$-index}, which generalizes both the $\aaa\bbb$-index and the
  Poincaré polynomial.
  For posets admitting $R$-labelings, we give a
  combinatorial description of the coefficients of the extended
  $\aaa\bbb$-index, proving their nonnegativity.
  In the case of intersection posets of hyperplane arrangements,
  we prove the above conjecture of the second author and Voll as well as another conjecture of the second
  author and Kühne.
  We also define the pullback $\aaa\bbb$-index, generalizing the $\ccc\ddd$-index of face posets for oriented matroids.
  Our results recover, generalize and unify results from Billera--Ehrenborg--Readdy, Bergeron--Mykytiuk--Sottile--van~Willigenburg, Saliola--Thomas, and Ehrenborg.
  This connection allows us to translate our results into the language of quasisymmetric functions, and---in the special case of symmetric functions---pose a conjecture about Schur positivity.
  This conjecture was strengthened and proved by Ricky Liu, and the proof appears as an appendix.
\end{abstract}

\thanks{
  Supported by DFG Heisenberg grant STU 563/\{4-6\}-1 ``Noncrossing phenomena in Algebra and Geometry'' ($\star,\ddagger$) and by DFG-GRK 2297 ``Mathematical Complexity Reduction'' ($\dagger$).
}

\maketitle

\section{Introduction}

Grunewald, Segal, and Smith introduced the subgroup zeta function of finitely-generated groups~\cite{Grunewald-Segal-Smith}, and
Du Sautoy and Grunewald gave a general method to compute such zeta functions using $p$-adic integration and resolution of singularities~\cite{duSautoy-Grunewald}.
This motivated Voll and the second author to consider the setting where the multivariate polynomials factor linearly and found that the $p$-adic integrals are specializations of multivariate rational functions depending only on the combinatorics of the corresponding hyperplane arrangement~\cite{Maglione-Voll}.
After a natural specialization, its denominator greatly simplifies, and they conjecture that the numerator polynomial has nonnegative coefficients.

\medskip
In this work, we prove their conjecture, which is related to the poles of these zeta functions; see \Cref{rem:implications-for-zetas}.
Specifically, we reinterpret these numerator polynomials by introducing and studying the \emph{(Poincaré-)extended $\aaa\bbb$-index}, a polynomial generalizing both the \emph{Poincaré polynomial} and \emph{$\aaa\bbb$-index} of the \emph{intersection poset} of the arrangement.
These polynomials have been studied extensively in combinatorics, although from different perspectives.
The coefficients of the Poincaré polynomial have interpretations in terms of the combinatorics and the topology of the
arrangement~\cite[Section~2.5]{BjornerEtAl}.
The $\aaa\bbb$-index, on the other hand, carries information about the order complex of the poset and is
particularly well-understood in the case of face posets of oriented
matroids---or, more generally, Eulerian posets. In those settings, the
$\aaa\bbb$-index encodes topological data via the \emph{flag
$f$-vector}~\cite{bayer-survey}.

\medskip
We study the extended $\aaa\bbb$-index in the generality of graded posets admitting $R$-labelings.
This class of posets includes intersection posets of hyperplane arrangements and, more generally, geometric lattices and geometric semilattices.
We show that the extended $\aaa\bbb$-index has nonnegative coefficients by interpreting them in terms of a combinatorial statistic.
This generalizes statistics given for the $\aaa\bbb$-index by Billera, Ehrenborg, and Readdy~\cite{billera-ehrenborg-readdy} and for the pullback $\aaa\bbb$-index (defined below) by Bergeron, Mykytiuk, Sottile and van Willigenburg~\cite{bergeron-mykytiuk-sottile-vanWilligenburg2000}.
This interpretation proves the aforementioned conjecture~\cite{Maglione-Voll}, as well as a related conjecture from Kühne and the second author~\cite{Kuhne-Maglione}.

\medskip
We also describe a close relationship between the Poincaré polynomial and the $\aaa\bbb$-index by showing that the extended $\aaa\bbb$-index can be obtained from the $\aaa\bbb$-index by a suitable substitution.
This recovers, generalizes and unifies several results in the literature.
Concretely, special cases of this substitution were observed by Billera, Ehrenborg and Ready
for lattices of flats of \emph{oriented matroids}~\cite{billera-ehrenborg-readdy},
by Saliola and Thomas for lattices of flats of \emph{oriented interval greedoids}~\cite{saliola-thomas}, and by Ehrenborg for \emph{distributive lattices}~\cite{Ehrenborg-signed}.
In agreement with those special cases, our substitution allows us to study the \emph{pullback $\aaa\bbb$-index}, a polynomial in noncommuting variables $\ccc = \aaa + \bbb$ and $2\ddd = 2(\aaa\bbb + \bbb\aaa)$ with connections to the combinatorics of \emph{$P$-partitions} and \emph{quasisymmetric functions} from~\cite{stembridge-p-partitions} and~\cite[Section~7]{bergeron-mykytiuk-sottile-vanWilligenburg2000}.
We conjecture that our substitution (now defined on quasisymmetric functions) restricts to a map on symmetric function, and that the image of a Schur function is Schur positive.

\medskip

The remainder of this paper is organized as follows.
In \Cref{sec:main-stuff}, we introduce the main definitions (\Cref{sec:main-defs}) and state the main results (\Cref{sec:mainresults}) of the paper.
We then discuss in \Cref{sec:connections} how these recover, generalize and unify results in the context of $P$-partitions and quasisymmetric functions.
The proofs are then presented in \Cref{sec:proof-of-combinatorial-interp,,sec:refinement-of-ber}.

\subsection*{Acknowledgements}

This work was initiated at the \emph{Combinatorial Coworkspace} in March 2022 and continued at the conference \emph{Geometry Meets Combinatorics} the following September in Bielefeld.
We thank the organizers of both events for bringing us together and for fostering collaborative work environments.
We also thank
Aram Dermenjian, 
Martina Juhnke, and 
Vic Reiner for 
useful discussions and Richard Ehrenborg for drawing our attention to~\cite{Ehrenborg-signed}.
We finally thank Darij Grinberg for pointing us to the extended abstract ``The algebra of extended peaks''~\cite{2023arXiv230100309G}.
We believe that they also arrived at the same polynomials in the case of distributive lattices.

\section{The Poincaré-Extended $\aaa\bbb$-index}
\label{sec:main-stuff}

\subsection{Main definitions}
\label{sec:main-defs}

Unless otherwise specified,~$P$ is a finite \Dfn{graded poset} of rank~$n$.
That is, $P$ is a finite poset with unique minimum element~$\hat 0$ of rank~$0$ and unique maximum element~$\hat 1$ of rank~$n$ such that $\rank(X)$ is equal to the length of any maximal chain from~$\hat 0$ to $X$.
The \Dfn{M\"obius function}~$\mu$ of $P$ is given by $\mu(X, X) = 1$ for all $X\in P$ and $\mu(X, Y) = -\sum_{X\leq Z < Y} \mu(X, Z)$ for all $X < Y$ in $P$.
The \Dfn{Poincaré polynomial} of~$P$ is
\[
  \POIN{P}{y} = \sum_{X\in P} \mu(\hat{0}, X) \cdot (-y)^{\rank(X)} \ \in \Z[y].
\]
The \Dfn{chain Poincaré polynomial} of a chain $\cC = \big\{\cC_1 < \dots < \cC_k \big\}$ in~$P$ is 
\[
  \POIN[\cC]{P}{y} =\prod_{i = 1}^k \POIN{[\cC_i,\cC_{i+1}]}{y} \ \in \Z[y], 
\]
where we set~$\cC_{k+1} = \hat{1}$. By taking the singleton chain $\{\hat 0\}$,
we recover the usual Poincaré polynomial, $\POIN{P}{y} = \POIN[\{\hat 0\}]{P}{y}$. The set of ranks of a given chain~$\cC$ is given by 
\[
  \Rank(\cC) = \left\{ \rank(\cC_i) ~\middle|~ 1 \leq i \leq k \right\}. 
\]
We often consider polynomials in noncommuting variables~$\aaa$ and~$\bbb$ with coefficients being polynomials in $\Z[y]$.
For a subset $S \subseteq \{i,i+1,\dots,j\}$, we write $\mon_S = m_i\dots m_j$ for the monomial with $m_k = \bbb$ if $k \in S$ and $m_k = \aaa$ if $k \notin S$ and we similarly write $\wt_S = w_i\dots w_j$ for the polynomial with
\begin{equation}
\label{eqn:ext-weights}
  w_k = \begin{cases}
          \bbb        & \text{if } k\in S, \\
          \aaa - \bbb & \text{if } k\notin S\,.
        \end{cases}
\end{equation}
The supersets $\{i,i+1,\dots,j\}$ are understood from the context as the set of all indices that can possibly be contained in the set~$S$.
In case of ambiguity, we in addition identify the considered superset.
For a chain $\cC$ in~$P$, we also set $\mon_\cC = \mon_{\Rank(\cC)}$ and $\wt_\cC = \wt_{\Rank(\cC)}$.
The following is the main object of study of this paper.

\begin{definition}
\label{def:extab}
  The (\Dfn{Poincaré-})\Dfn{extended $\aaa\bbb$-index} of~$P$ is
  \[
    \extPsi(P;y,\aaa,\bbb)
      = \sum_{\cC\text{ chain in } P\setminus\{\hat{1}\}} \POIN[\cC]{P}{y} \cdot \wt_{\cC} \ \in \Z[y]\langle\aaa,\bbb\rangle\,,
  \]
  where~$\wt_\cC = w_0 \cdots w_{n-1}$ is given in \Cref{eqn:ext-weights}.
\end{definition}

Since~$P$ has a unique minimum, we always have $\POIN{P}{0} = 1$, implying
\[
  \extPsi(P;0,\aaa,\bbb)
    = \sum_{\cC\text{ chain in } P\setminus\{\hat{1}\}} \wt_\cC\,.
\]
This recovers\footnote{This is actually a mild variant of the usual definition of the $\aaa\bbb$-index; see~\Cref{rem:bottom-element-toss-out} below.} the \Dfn{$\aaa\bbb$-index}
\[
  \Psi(P;\aaa,\bbb) = \extPsi(P;0,\aaa,\bbb)\,.
\]

\begin{example}
\label{ex:face-poset2}
  We compute the extended $\aaa\bbb$-index of the poset $\cL$ drawn below on the left.
  On the right, we collect the relevant data.
  \begin{center}
    \begin{tikzpicture}[scale=1.2, baseline={([yshift=-.5ex]current bounding box.center)}]
      \node[invisivertex] (1)  at ( 0,2){$\hat 1$};
      \node[invisivertex] (H1) at (-1,1){$\alpha_1$};
      \node[invisivertex] (H2) at ( 0,1){$\alpha_2$};
      \node[invisivertex] (H3) at ( 1,1){$\alpha_3$};
      \node[invisivertex] (0)  at ( 0,0){$\hat 0$};
      \draw (0) -- (H1) -- (1);
      \draw (0) -- (H2) -- (1);
      \draw (0) -- (H3) -- (1);
    \end{tikzpicture}
     \hspace{2cm}
    \begin{tabular}{|c|c|c|c|}
      \hline
      &&&\\[-10pt]
      $\cC$ & $\POIN[\cC]{\cL}{y}$ & $\Rank(\cC)$ & $\wt_\cC$ \\[5pt]
      \hline
      &&&\\[-10pt]
      $\{\}$ & $1$ & $\{\}$ & $(\aaa-\bbb)^2$ \\[5pt]
      $\{\hat 0\}$ & $1 + 3y + 2y^2$ & $\{0\}$ & $\bbb(\aaa-\bbb)$ \\[5pt]
      $\{\alpha_i\}$ & $1+y$ & $\{1\}$ & $(\aaa-\bbb)\bbb$ \\[5pt]
      $\{\hat 0 < \alpha_i\}$ & $(1+y)^2$ & $\{0,1\}$ & $\bbb^2$\\
      \hline
    \end{tabular}
  \end{center}
  The extended $\aaa\bbb$-index and its specialization to the $\aaa\bbb$-index are thus
  \begin{align*}
    \extPsi(\cL;y,\aaa,\bbb)
      &= (\aaa-\bbb)^2\! +\! (1 + 3y + 2y^2)\bbb(\aaa-\bbb)\! +\! 3\cdot (1+y)(\aaa-\bbb)\bbb\! +\! 3\cdot(1+y)^2\bbb^2 \\
      &= \aaa^2 + (3y+2y^2)\bbb\aaa + (2+3y)\aaa\bbb + y^2\bbb^2, \\
    \Psi(\cL;\aaa,\bbb) &= \aaa^2 + 2\aaa\bbb\,.
  \end{align*}
\end{example}

\begin{remark}
  \label{rem:top-element-toss-out}
  Taking chains~$\cC$ in $P \setminus \{\hat 1\}$, rather than in~$P$, is a
  harmless reduction in the definition of the extended $\aaa\bbb$-index since
  $\POIN[\cC]{P}{y} = \POIN[\cC\cup\{\hat 1\}]{P}{y}$. If we consider both~$\cC$
  and~$\cC \cup\{\hat 1\}$ separately as summands of $\extPsi(P;y,\aaa,\bbb)$,
  we would need to consider weights $\wtn_\cC = w_0 \cdots w_n$
  taking also the $n$-th position into account.
  We would have the two terms $\POIN[\cC]{P}{y}\cdot\wtn_{\cC}$ and
  $\POIN[\cC \cup \{\hat 1\}]{P}{y}\cdot\wtn_{\cC \cup \{\hat 1\}}$,
  differing only in the last entry of the weight, so their sum is $\POIN[\cC]{P}{y}\cdot\wt_{\cC}\cdot \aaa$.
  This argument holds for all chains, proving
  \begin{equation}
    \extPsi(P;y,\aaa,\bbb)\cdot\aaa = \sum_{\cC\text{ chain in } P} \POIN[\cC]{P}{y} \cdot \wtn_\cC.
    \label{eq:enab01}
  \end{equation}
\end{remark}

\begin{remark}
\label{rem:bottom-element-toss-out}
  In the case of the $\aaa\bbb$-index, a similar argument to the one in
  \Cref{rem:top-element-toss-out} implies that we could further restrict to
  chains in $P\setminus\{\hat 0, \hat 1\}$. Therefore, we have 
  \begin{equation}
    \Psi(P;\aaa,\bbb) = \aaa\cdot\left(\sum_{\cC\text{ chain in } P\setminus\{\hat 0,\hat 1\}} \wtp_\cC\right),
    \label{eq:ab01}
  \end{equation}
  where $\wtp_\cC = w_1 \dots w_{n-1}$ as neither~$0$ nor~$n$ can appear
  in $\Rank(\cC)$. The expression in the parentheses recovers the usual
  definition of the $\aaa\bbb$-index as given, for example, in~\cite[Section~2]{bayer-survey}. Using additional information, the extended $\aaa\bbb$-index
  admits a similar property to that in \Cref{eq:ab01}, which is given in
  \Cref{cor:enab0}.
\end{remark}

The fact that $\hat 1$ is included in every chain in the computation of the
chain Poincaré polynomial is inspired by the setting of hyperplane arrangements.
A (central, real) \Dfn{hyperplane arrangement} $\mathcal{A}$ is a finite collection of
hyperplanes in~$\mathbb{R}^d$, all of which have a
common intersection.
The \Dfn{lattice of flats} $\mathcal{L}$ of $\mathcal{A}$
is the poset of subspaces of $\mathbb{R}^d$ obtained from
intersections of subsets of the hyperplanes, ordered by reverse inclusion.
The open, connected components of the complement $\mathbb{R}^d\setminus \mathcal{A}$ are called
(open) \Dfn{chambers}.
The set of (closed) \Dfn{faces} $\Sigma$ is the set of \emph{closures} of chambers of $\cA$, together with all possible intersections of closures of chambers (ignoring intersections which are empty).
This set comes equipped with a natural partial order by reverse inclusion, and for this reason we refer to $\Sigma$ as the \Dfn{face poset} of $\mathcal{A}$.
There is an order-preserving, rank-preserving
surjection $\supp : \Sigma \twoheadrightarrow \cL$ sending a face to its affine
span~\cite[Proposition~4.1.13]{BjornerEtAl}. This map extends to chains, and the
fiber sizes are given, for $\cC = \{ \cC_1 < \dots < \cC_k \}\subseteq \cL$, by
\begin{equation}
  \#\supp^{-1}(\cC) = \prod_{i=1}^{k} \POIN{[\cC_i,\cC_{i+1}]}{1} = \POIN[\cC]{P}{1}, 
  \label{eq:supppre}
\end{equation}
with $\cC_{k+1} = \hat 1$; see~\cite[Proposition~4.6.2]{BjornerEtAl}.
This is the key motivation for the next definition.
\begin{definition}
\label{def:pullbackab}
  The \Dfn{pullback $\aaa\bbb$-index} of~$P$ is
  \[
    \pullPsi(P;\aaa,\bbb) = \extPsi(P;1,\aaa,\bbb).
  \]
\end{definition}

Let~$\Sigma$ be the face poset and $\cL$ the lattice of flats of a real central
hyperplane arrangement. Since~$\Sigma$ may not have a unique minimum element, we
formally add a minimum element~$\hat{0}$ and let $\Sigma \cup \{\hat 0 \}$ be
the resulting poset. Now, \Cref{eq:supppre} relates the $\aaa\bbb$-index of the face poset and the pullback $\aaa\bbb$-index of the lattice of flats by
\begin{equation}
  \Psi(\Sigma \cup\{\hat 0\};\aaa,\bbb) =  \aaa\cdot\pullPsi(\cL;\aaa,\bbb)\,.
  \label{eq:pullbackab}
\end{equation}
Note that this corresponds to the evaluation of $\extPsi(\Sigma\cup\{\hat 0\};y,\aaa,\bbb)$ at $y=0$
to the evaluation of $\extPsi(\cL;y,\aaa,\bbb)$ at $y=1$.
\Cref{eq:supppre} and thus also \Cref{eq:pullbackab} hold indeed in the more general context of oriented
matroids.

\begin{example}
\label{ex:pullbackab}
  The pullback $\aaa\bbb$-index of the poset from \Cref{ex:face-poset2} is
  \[
    \pullPsi(\cL;\aaa,\bbb) = \extPsi(\cL;1,\aaa,\bbb) = \aaa^2 + 5\bbb\aaa + 5\aaa\bbb + \bbb^2\,.
  \]
  Consider the arrangement of three lines in the plane through a nonempty intersection as shown below on the left in a way that emphasizes its face structure.
  Its lattice of flats is the poset $\cL$ from \Cref{ex:face-poset2}.
  To the right, we draw its face poset~$\Sigma$ with~$\hat 0$ included.
  \begin{center}
     \begin{tikzpicture}[scale=1.2]
       \draw[very thick] (  0:-2) -- (  0:2);
       \draw[thick] (120:-2) -- (120:2);
       \draw[thick] (240:-2) -- (240:2);
       \draw[white, fill=white]  (0,0) circle [radius=4pt];
       \draw[fill]  (0,0) circle [radius=2pt];
       \node[invisivertex] at (30:1){\tiny$+++$};
       \node[invisivertex] at (90:1){\tiny$-++$};
       \node[invisivertex] at (150:1){\tiny$--+$};
       \node[invisivertex] at (210:1){\tiny$---$};
       \node[invisivertex] at (270:1){\tiny$+--$};
       \node[invisivertex] at (330:1){\tiny$++-$};
      \end{tikzpicture}
     \qquad
     \begin{tikzpicture}[scale=1.5]
      \node[invisivertex] (O) at (2.5,2){\tiny $000$};

      \node[invisivertex] (H1+) at (0,1){\tiny $0++$};
      \node[invisivertex] (H2+) at (1,1){\tiny $+0+$};
      \node[invisivertex] (H3-) at (2,1){\tiny $-+0$};
      \node[invisivertex] (H2-) at (3,1){\tiny $-0-$};
      \node[invisivertex] (H1-) at (4,1){\tiny $0--$};
      \node[invisivertex] (H3+) at (5,1){\tiny $+-0$};

      \node[invisivertex] (C1) at (0,0){\tiny $+++$};
      \node[invisivertex] (C2) at (1,0){\tiny $-++$};
      \node[invisivertex] (C3) at (2,0){\tiny $-+-$};
      \node[invisivertex] (C4) at (3,0){\tiny $---$};
      \node[invisivertex] (C5) at (4,0){\tiny $+--$};
      \node[invisivertex] (C6) at (5,0){\tiny $+-+$};

      \node[invisivertex] (1) at (2.5,-1){\tiny $\hat 0$};

      \path[-] (O) edge [] node[above] {} (H1+);
      \path[-] (O) edge [] node[above] {} (H2+);
      \path[-] (O) edge [] node[above] {} (H3-);
      \path[-] (O) edge [] node[above] {} (H2-);
      \path[-] (O) edge [] node[above] {} (H1-);
      \path[-] (O) edge [] node[above] {} (H3+);

      \path[-] (C1) edge [] node[above] {} (H1+);
      \path[-] (C1) edge [] node[above] {} (H2+);
      \path[-] (C2) edge [] node[above] {} (H1+);
      \path[-] (C2) edge [] node[above] {} (H3-);
      \path[-] (C3) edge [] node[above] {} (H3-);
      \path[-] (C3) edge [] node[above] {} (H2-);
      \path[-] (C4) edge [] node[above] {} (H2-);
      \path[-] (C4) edge [] node[above] {} (H1-);
      \path[-] (C5) edge [] node[above] {} (H1-);
      \path[-] (C5) edge [] node[above] {} (H3+);
      \path[-] (C6) edge [] node[above] {} (H3+);
      \path[-] (C6) edge [] node[above] {} (H2+);

      \path[-] (C1) edge [dashed] node[above] {} (1);
      \path[-] (C2) edge [dashed] node[above] {} (1);
      \path[-] (C3) edge [dashed] node[above] {} (1);
      \path[-] (C4) edge [dashed] node[above] {} (1);
      \path[-] (C5) edge [dashed] node[above] {} (1);
      \path[-] (C6) edge [dashed] node[above] {} (1);
      
      \end{tikzpicture}
  \end{center}
  The $\aaa\bbb$-index of $\Sigma\cup\{\hat 0\}$ can be computed as
  \[
    \aaa^3 + 5\aaa\bbb\aaa + 5 \aaa^2\bbb + \aaa\bbb^2 = \aaa(\aaa^2 + 5\bbb\aaa + 5\aaa\bbb + \bbb^2).
  \]
  As seen above, this equals $\aaa\cdot\pullPsi(\cL;\aaa,\bbb)$.
\end{example}

\subsection{Main results}
\label{sec:mainresults}

The main results of this paper concern \emph{$R$-labeled posets}.
These form a large family of posets including \emph{distributive lattices}, \emph{semimodular} (in particular \emph{geometric}) \emph{lattices}, and \emph{noncrossing partition lattices}.
In order to state \Cref{thm:combinatorial-interp}, we introduce a combinatorial statistic on maximal chains of these posets and use this to describe the extended $\aaa\bbb$-index.
In \Cref{sec:connections}, we briefly discuss this combinatorial statistic for general edge labeled graded posets.

\medskip
A function~$\lambda$ from the set of cover relations $X\lessdot Y$ in~$P$ into the positive integers is an \Dfn{$R$-labeling} of $P$ if, for every interval $[X, Y]$ in~$P$, there is a unique
maximal chain $X=\cM_i\lessdot \cM_{i+1} \lessdot \cdots \lessdot \cM_j=Y$ such that 
\[ 
  \lambda(\cM_i,\cM_{i+1}) \leq \lambda(\cM_{i+1},\cM_{i+2}) \leq \cdots \leq \lambda(\cM_{j-1},\cM_j). 
\] 
We say a poset $P$ is \Dfn{$R$-labeled} if it is finite, graded, and admits an
$R$-labeling.
Throughout this section, we consider $R$-labeled posets with a fixed $R$-labeling~$\lambda$.

\medskip
The first result is
a combinatorial statistic describing the coefficients of the extended
$\aaa\bbb$-index which witnesses their nonnegativity.
It generalizes~\cite[Corollary~7.2]{billera-ehrenborg-readdy} and also reproves it using purely combinatorial arguments.
For a maximal chain~$\cM =
\{\cM_0\lessdot\cM_1\lessdot\dots\lessdot\cM_n\}$ in $P$, define the monomial
$\StatMon(\cM) = u_1\cdots u_n$ in $\aaa,\bbb$ given by $u_1 = \aaa$
and for $i \in \{2,\dots,n\}$ by
\begin{equation}
\label{eq:statmon}
  u_i = \begin{cases}
                \aaa &\text{if } \lambda(\cM_{i-2},\cM_{i-1}) \leq \lambda(\cM_{i-1},\cM_{i})\,, \\
                \bbb &\text{if } \lambda(\cM_{i-2},\cM_{i-1}) > \lambda(\cM_{i-1},\cM_{i})\,.
              \end{cases}
\end{equation}
Now, let $E \subseteq \{1,\dots,n\}$, viewed as a subset of the cover relations
in the chain~$\cM$. Define the monomial $\StatMon(\cM,E) = v_1\dots v_n$ in
$\aaa,\bbb$ to be obtained from $\StatMon(\cM)$ by
\begin{itemize}
  \item replacing all variables~$\aaa$ by $\bbb$ at positions~$i\in \{1,\dots,
  n\}$ if $i \in E$ and
  \item replacing all variables~$\bbb$ by $\aaa$ at positions~$i\in \{2,\dots,
  n\}$ if $i-1 \in E$.
\end{itemize}
In symbols this means, for the given position~$i \in \{1,\dots,n\}$, that
\begin{equation*}
  \begin{array}{ll}
          v_i = \aaa &\text{ if } \begin{cases}
                              u_i = \aaa, \quad i\hspace*{21.5pt} \notin E\quad\text{or} \\
                              u_i = \bbb, \quad i-1 \in E\,,
                            \end{cases} \\ \\
          v_i = \bbb & \text{ if } \begin{cases}
                              u_i = \aaa, \quad i\hspace*{21.5pt} \in E\quad\text{or} \\
                              u_i = \bbb, \quad i-1 \notin E\,.
                            \end{cases}
  \end{array}
\end{equation*}
We have, in particular, $\StatMon(\cM,\emptyset) = \StatMon(\cM)$ and
\begin{equation}
\label{eq:firstindex}
  v_1 = \begin{cases}
          \aaa & \text{ if } 1 \notin E\,, \\
          \bbb & \text{ if } 1 \in E\,.
        \end{cases}
\end{equation}

\begin{thm}
\label{thm:combinatorial-interp}
  Let~$P$ be an $R$-labeled poset of rank~$n$.
 Then 
  \[
    \extPsi(P;y,\aaa,\bbb)
      = \sum_{(\cM,E)} y^{\#E}\cdot\StatMon(\cM,E)
  \]
  where the sum ranges over all maximal chains~$\cM$ in~$P$ and all subsets~$E \subseteq \{1,\dots,n\}$.
\end{thm}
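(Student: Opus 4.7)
The plan is to unfold the left-hand side using the $R$-labeling—the chain Poincaré polynomial via its expression as a sum over strictly descending saturated chains in each interval, and the weight $\wt_\cC$ via distributivity of the $\aaa-\bbb$ factors—then reorganize the resulting signed sum by maximal chains of $P$ and apply a sign-reversing involution to cancel non-canonical contributions down to the right-hand side.

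Concretely, the standard consequence of the $R$-labeling that $|\mu(X,Z)|$ equals the number of strictly descending saturated chains from $X$ to $Z$ yields, for any $R$-labeled interval $[X,Y]$,
$$\POIN{[X, Y]}{y} \;=\; \sum_{\mathcal{F}} y^{\ell(\mathcal{F})},$$
where $\mathcal{F}$ ranges over saturated chains in $[X,Y]$ starting at $X$ with strictly decreasing labels. Taking the product over the intervals of a chain $\cC$ expresses $\POIN[\cC]{P}{y}$ as a sum over tuples $(\mathcal{F}_1,\ldots,\mathcal{F}_k)$ of such descending chains. Because every $R$-labeled interval also admits a unique weakly ascending maximal chain, each tuple $(\cC,(\mathcal{F}_i))$ extends canonically to a maximal chain $\cM$ of $P$ by inserting the unique ascending maximal chain from $\hat 0$ up to $\cC_1$ and, within each $[\cC_i,\cC_{i+1}]$, the unique ascending maximal chain from the top of $\mathcal{F}_i$ up to $\cC_{i+1}$. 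The fiber of this extension over $\cM$ consists of configurations with $\cC\subseteq\cM$, each $\mathcal{F}_i$ lying along $\cM$, and the complementary pieces of $\cM$ weakly ascending. Expanding $\wt_\cC$ at positions $i \notin \Rank(\cC)$ chooses a subset $T\subseteq\{0,\ldots,n-1\}\setminus\Rank(\cC)$ of $\bbb$-selections with sign $(-1)^{|T|}$, producing the monomial $m_{\Rank(\cC)\cup T}$, where $m_S$ denotes the length-$n$ monomial with $\bbb$ at positions of $S$ and $\aaa$ elsewhere. Collecting these pieces gives
$$\extPsi(P; y,\aaa,\bbb) \;=\; \sum_\cM \sum_{(\cC,(\mathcal{F}_i),T)} (-1)^{|T|}\, y^{\sum_i \ell(\mathcal{F}_i)}\, m_{\Rank(\cC)\cup T}.$$

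For each maximal chain $\cM$, a sign-reversing involution on the inner sum is then defined by locating the smallest index $i$ at which a \emph{local toggle} is available: either $i\in T$, in which case $\cM_i$ is inserted into $\cC$ (splitting the containing interval with a canonical re-partitioning of the length of the corresponding falling chain), or $\cM_i\in\cC$ and the adjacent data permits removal (merging the two neighboring intervals and re-combining their falling-chain lengths). Each toggle flips the sign $(-1)^{|T|}$ while preserving the monomial $m_{\Rank(\cC)\cup T}$ and the $y$-weight $y^{\sum_i\ell(\mathcal{F}_i)}$. The fixed points are configurations with $T=\emptyset$ admitting no such toggle; they biject with subsets $E\subseteq\{1,\ldots,n\}$ via $E=\bigcup_i\{j_i+1,\ldots,j_i+\ell(\mathcal{F}_i)\}$, where $\Rank(\cC)=\{j_1<\cdots<j_k\}$. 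Here $\cC$ contains both the starts of the falling chains and any additional ``absorbing'' elements needed so that every descent of $\cM$ outside $E$ occurs at an interval boundary. A case analysis based on whether each position of $\cM$ is an ascent or a descent then identifies $m_{\Rank(\cC)}$ with $\StatMon(\cM,E)$ via the rules of \eqref{eq:statmon}. The principal obstacle is the precise construction and verification of this involution: one must check that the ``smallest toggleable position'' rule is well-defined, that the splitting and merging of falling chains preserves $y^{\sum_i \ell(\mathcal{F}_i)}$, and that the fixed-point characterization produces exactly the $(\cM,E)$-parametrized pairs with the correct monomial—a reconciliation between the $R$-labeling's ascent/descent constraints and the $u_i$-based substitution rule that defines $\StatMon(\cM,E)$.
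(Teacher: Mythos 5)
Your plan mirrors the paper's proof in its essential structure: both start from the $R$-labeling characterization of $|\mu(X,Y)|$ as the number of strictly decreasing maximal chains (the paper's Lemma~\ref{lem:moebiuscount}), both expand the chain Poincar\'e polynomial as a sum over ``increasing-decreasing'' maximal chains refining $\cC$ (Proposition~\ref{lem:poincare-expansion}), both distribute the $(\aaa-\bbb)$ factors of $\wt_\cC$ to obtain a signed sum indexed by sets $T\supseteq\Rank(\cC)$, and both then cancel the signed overcounting. The only genuine difference is how that cancellation is organized: the paper packages it as set-theoretic inclusion-exclusion via the embeddings $\varphi_{S,T}:\PIE_\ell(S)\hookrightarrow\PIE_\ell(T)$ and the definition of $\topPIE_\ell(T)$ (Proposition~\ref{lem:A-B-expansion}, Theorem~\ref{thm:toppiecount}), whereas you propose an explicit sign-reversing involution. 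These are interchangeable techniques, and your fixed points correspond exactly to the triples $(\cC_R,\cD_R,\cM)$ with $R=\TT{\cM}{E}$ from the paper's Lemma~\ref{prop:Tdescription} and Proposition~\ref{prop:PIEdescription}. So this is the same route, not a new one; fleshing out your involution would essentially reconstruct those two results by hand.

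One concrete imprecision worth fixing: your description of the fixed-point chain $\cC$ — ``the starts of the falling chains and any additional absorbing elements needed so that every descent of $\cM$ outside $E$ occurs at an interval boundary'' — captures only one of the two kinds of forced absorbing elements. The minimal $R = \TT{\cM}{E}$ consists of the positions $i$ with $i,i+1\notin E$ and $u_{i+1}=\bbb$ (descents in the nominally-ascending region, which you do describe) \emph{and} the positions $i$ with $i,i+1\in E$ and $u_{i+1}=\aaa$ (ascents in the nominally-descending region, which you omit). Without the second type, the falling chain $\mathcal{F}_j$ is not actually decreasing along $\cM$, so the configuration is invalid rather than a fixed point. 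Accounting for both cases is exactly what produces the rule defining $\StatMon(\cM,E)$ in the statement, so this omission is the main thing standing between your sketch and a complete proof; the remainder of the involution bookkeeping (well-definedness of the smallest toggleable index, preservation of $\Rank(\cC)\cup T$ and of $\#E$) is routine once that is in place.
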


When $P$ is a geometric lattice, setting $y = 0$ in \Cref{thm:combinatorial-interp} recovers~\cite[Corollary~7.2]{billera-ehrenborg-readdy}.
Specifically
\begin{equation}
  \Psi(P;\aaa,\bbb)
    = \sum_{\cM} \StatMon(\cM)\,,
\label{eq:abstat}
\end{equation}
where the sum ranges over all maximal chains~$\cM = \{ \cM_0 \lessdot \dots \lessdot \cM_n\}$.

\begin{example}
\label{ex:face-poset3}
  The poset from the previous examples admits the $R$-labeling given below on the left.
  On the right, we collect the relevant data to compute the combinatorial description of the extended $\aaa\bbb$-index.
  \begin{center}
    \begin{tikzpicture}[scale=.7, baseline={([yshift=-.5ex]current bounding box.center)}]
      \node[invisivertex] (1) at (2,4){$\hat 1$};
      \node[invisivertex] (H1) at (0,2){$\alpha_1$};
      \node[invisivertex] (H2) at (2,2){$\alpha_2$};
      \node[invisivertex] (H3) at (4,2){$\alpha_3$};
      \node[invisivertex] (0) at (2,0){$\hat 0$};

      \node[edgelabel] at (.5,1){\tiny $1$};
      \node[edgelabel] at (1.7,1){\tiny $2$};
      \node[edgelabel] at (3.5,1){\tiny $3$};

      \node[edgelabel] at (.5,3){\tiny $2$};
      \node[edgelabel] at (1.7,3){\tiny $1$};
      \node[edgelabel] at (3.5,3){\tiny $1$};

      \draw (0) -- (H1) -- (1);
      \draw (0) -- (H2) -- (1);
      \draw (0) -- (H3) -- (1);
    \end{tikzpicture}
    \hspace*{2cm}
    \begin{tabular}{|c|c|c|c|c|}
    \hline
    & & & &\\[-10pt]
    $E$ & $y^{\#E}$
    & $\hat 0 \lessdot \alpha_1 \lessdot \hat 1$
    & $\hat 0 \lessdot \alpha_2 \lessdot \hat 1$ &
    $\hat 0 \lessdot \alpha_3 \lessdot \hat 1$ \\[5pt]
    \hline
    & & & &\\[-10pt]
    $\{\}$      & $1$   & $\aaa\aaa$ & $\aaa\bbb$ & $\aaa\bbb$ \\
    $\{ 1 \}$   & $y$   & $\bbb\aaa$ & $\bbb\aaa$ & $\bbb\aaa$ \\
    $\{ 2 \}$   & $y$   & $\aaa\bbb$ & $\aaa\bbb$ & $\aaa\bbb$ \\
    $\{ 1,2 \}$ & $y^2$ & $\bbb\bbb$ & $\bbb\aaa$ & $\bbb\aaa$\\[5pt]
    \hline
    \end{tabular}
  \end{center}
  Then $\extPsi(\cL;y,\aaa,\bbb)$ is obtained as
  \[
    \extPsi(\cL;y,\aaa,\bbb) = \aaa\aaa + (3y+2y^2)\bbb\aaa + (2 + 3y)\aaa\bbb + y^2\bbb\bbb\,,
  \]
  in agreement with our computation in \Cref{ex:face-poset2}.
\end{example}

\Cref{thm:combinatorial-interp} has many consequences, which we formulate in eight corollaries.
The most important gives a substitution sending the
$\aaa\bbb$-index to the extended $\aaa\bbb$-index---meaning that the extended
$\aaa\bbb$-index is already encoded in the $\aaa\bbb$-index.

\begin{corollary}
\label{thm:refinement-of-ber}
  For an $R$-labeled poset~$P$, we have
  \[
    \extPsi(P;y,\aaa,\bbb) = \omega\big(\Psi(P;\aaa,\bbb)\big)
  \]
  where the substitution $\omega$ replaces all occurrences of $\aaa\bbb$ with
  $\aaa\bbb + y\bbb\aaa + y\aaa\bbb + y^2\bbb\aaa$ and then simultaneously
  replaces all remaining occurrences of $\aaa$ with $\aaa+y\bbb$ and $\bbb$ with
  $\bbb+y\aaa$.
\end{corollary}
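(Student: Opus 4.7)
The plan is to reduce the identity to a claim about a single maximal chain and then verify that claim by factoring both sides over a common block decomposition of the word $\StatMon(\cM)$.

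First, I invoke \Cref{thm:combinatorial-interp} at the given $y$ and at $y=0$:
\[
  \extPsi(P;y,\aaa,\bbb) = \sum_{\cM}\sum_{E\subseteq\{1,\dots,n\}} y^{\#E}\,\StatMon(\cM,E)
  \qquad\text{and}\qquad
  \Psi(P;\aaa,\bbb) = \sum_{\cM}\StatMon(\cM),
\]
with the sums running over all maximal chains $\cM$ of $P$. Since $\omega$ is a linear substitution on monomials in $\aaa,\bbb$, it suffices to fix a single $\cM$ and prove the purely word-combinatorial identity
\[
  \omega\bigl(\StatMon(\cM)\bigr) \;=\; \sum_{E\subseteq\{1,\dots,n\}} y^{\#E}\,\StatMon(\cM,E).
\]

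Second, I decompose the word $u=\StatMon(\cM)=u_1\cdots u_n$ (so $u_1=\aaa$) into blocks by splitting immediately before every occurrence of $\aaa$ at positions $i>1$. Each block has the form $\aaa\bbb^k$ for some $k\geq 0$. This decomposition is respected by both sides. On the left, any $\aaa\bbb$ substring of $u$ lies entirely inside a single block (a cross-boundary pair would have the shape $x\aaa$, never $\aaa\bbb$), so $\omega$ factors as the product over blocks. On the right, a direct reading of the bullet-point rules defining $\StatMon(\cM,E)$ shows that the indicator of $i\in E$ influences only $v_i$ (when $u_i=\aaa$) and $v_{i+1}$ (when $u_{i+1}=\bbb$); hence the indicators associated with positions of one block affect only the $v$'s of that same block, and the sum over $E$ factors accordingly.

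Third, I verify the identity on each block. A singleton block $\aaa$ gives $\aaa+y\bbb$ on both sides. For a block $B=\aaa\bbb^k$ with $k\geq 1$ at positions $i,\dots,i+k$, the substitution gives
\[
  \omega(B) = (\aaa\bbb+y\bbb\aaa+y\aaa\bbb+y^2\bbb\aaa)(\bbb+y\aaa)^{k-1} = (1+y)(\aaa\bbb+y\bbb\aaa)(\bbb+y\aaa)^{k-1}.
\]
On the other side, the indicator at position $i$ jointly toggles the pair $(v_i,v_{i+1})\in\{\aaa\bbb,\bbb\aaa\}$ and contributes $\aaa\bbb+y\bbb\aaa$; for $1\leq j\leq k-1$, the indicator at position $i+j$ flips the single $\bbb$-letter at position $i+j+1$ and contributes $\bbb+y\aaa$; finally the indicator at position $i+k$ is free (it targets no $v$ in the block, since $u_{i+k}=\bbb$ and the next position is either the start of a new block with $\aaa$ or off the end of $u$), contributing only $1+y$. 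Multiplying these three factors yields the same expression.

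The main obstacle is the locality argument in the second paragraph: extracting from the definition of $\StatMon(\cM,E)$ that each indicator of $i\in E$ acts locally on at most two consecutive positions, and that its effects stay within a single block. Once locality and the matching factorization of $\omega$ are established, the per-block identity reduces to a short algebraic check.
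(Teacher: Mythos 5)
Your proposal is correct and follows essentially the same route as the paper: reduce via Theorem~\ref{thm:combinatorial-interp} to the single-chain identity $\omega(\StatMon(\cM)) = \sum_E y^{\#E}\StatMon(\cM,E)$, then factor both sides over the $\aaa\bbb^k$-block decomposition of $\StatMon(\cM)$ (the paper's ``decomposing chain'') and check each block directly. Your per-block bookkeeping, assigning the factor $\aaa\bbb+y\bbb\aaa$ to the indicator at the block's initial position, $\bbb+y\aaa$ to each of the next $k-1$ positions, and a free $1+y$ to the final position, and using $(\aaa\bbb+y\bbb\aaa+y\aaa\bbb+y^2\bbb\aaa)=(1+y)(\aaa\bbb+y\bbb\aaa)$, is a cleaner organization of the same count that appears in the paper's Lemma~\ref{lem:omeggs-over-easy}.
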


The proof of \Cref{thm:refinement-of-ber} relies on the fact that~$P$ is $R$-labeled.
We suspect however that this is true more generally.

\begin{conjecture}
  \label{conj:refinement-of-ber}
  For any finite and graded poset~$P$, we have
  \[
    \extPsi(P;y,\aaa,\bbb) = \omega\big(\Psi(P;\aaa,\bbb)\big)\,.
  \]
\end{conjecture}
We have tested this conjecture on all graded posets of cardinality at most~$10$ and also on many larger graded posets.

\begin{example}
  We have seen that the poset~$\cL$ from \Cref{ex:face-poset2} has $\aaa\bbb$-index
  \[
    \Psi(\cL;\aaa,\bbb) = \extPsi(\cL,0,\aaa,\bbb) = \aaa\aaa + 2\aaa\bbb\,.
  \]
  Applying $\omega$ gives
  \[
    \omega\big(\Psi(\cL;\aaa,\bbb)\big)
      = (\aaa+y\bbb)^2 + 2(\aaa\bbb + y\bbb\aaa + y\aaa\bbb + y^2\bbb\aaa) = \extPsi(\cL;y,\aaa,\bbb)\,,
  \]
  which coincides with the extended $\aaa\bbb$-index we computed in \Cref{ex:face-poset3}.
\end{example}

Using \Cref{thm:refinement-of-ber}, the monomials $\StatMon(\cM,E)$ in \Cref{thm:combinatorial-interp} capture the same information as the \emph{generalized descent sets} on \emph{réseaux} as defined by Bergeron, Mykytiuk, Sottile, and van Willigenburg in~\cite[Section~7]{bergeron-mykytiuk-sottile-vanWilligenburg2000} in the context of quasisymmetric functions.
The next corollary can be seen as a refinement of~\cite[Proposition~2.2]{stembridge-p-partitions} and of~\cite[Theorem~7.2]{bergeron-mykytiuk-sottile-vanWilligenburg2000}, stated in terms of $\aaa\bbb$-indices rather than quasisymmetric functions.
Both can be seen as the special case for the pullback $\aaa\bbb$-index: the first for \emph{enriched $P$-partitions} and the second for general edge-labeled graded posets, compare with \Cref{sec:connections}.
We start by describing their relevant combinatorics in the present notation.
Let $\cM$ be a maximal chain with $\StatMon(\cM) = u_1\dots u_n$, and let
\[
  \peak(\cM) = \big\{ i \in \{2,\dots,n\} \mid u_{i-1} = \aaa, u_i = \bbb \big\}
\]
denote its \Dfn{peak set}.
A set $S \subseteq \{1,\dots,n\}$ is then \Dfn{$\cM$-peak-covering} if
\[
  \peak(\cM) \subseteq S \cup \{ i+1 \mid i \in S\}\,,
\]
and let $\bout(\cM,S)$ be the number of positions $i \in \{1,\dots,n\} \setminus S$ for which $u_i = \bbb$.

\begin{corollary}
\label{cor:peakenumeration}
  For an~$R$-labeled poset $P$ of rank~$n$, we have
  \[
    \extPsi(P;y,\aaa,\bbb) = \sum_{(\cM,S)} (1+y)^{\# S}\cdot y^{\bout(\cM,S)} \cdot \wt_S\,,
  \]
  where the sum ranges over all maximal chains~$\cM$ and all $\cM$-peak-covering subsets $S \subseteq \{1,\dots,n\}$ and where $\wt_S = w_1\dots w_n$ as given in \Cref{eqn:ext-weights}.
\end{corollary}

\begin{example}
  For the poset~$\cL$ from \Cref{ex:face-poset2}, using \Cref{cor:peakenumeration} we compute
  \begin{align*}
    \extPsi(\cL;y,\aaa,\bbb)
    &= \underbrace{(\aaa-\bbb)^2}_{S = \emptyset} + \underbrace{(1+y) \cdot \bbb(\aaa-\bbb)}_{S = \{1\}} + \underbrace{(1+y) \cdot (\aaa-\bbb)\bbb}_{S = \{2\}} + \underbrace{(1+y)^2 \cdot\bbb^2}_{S = \{1,2\}} \\
    &+ 2\cdot\Big(\underbrace{(1+y) \cdot y \cdot \bbb(\aaa-\bbb)}_{S = \{1\}} + \underbrace{(1+y) \cdot (\aaa-\bbb)\bbb}_{S = \{2\}} + \underbrace{(1+y)^2 \cdot\bbb^2}_{S = \{1,2\}} \Big)
  \end{align*}
  where the sum in the first row corresponds to the maximal chain $\cM = \{ \hat 0 \lessdot \alpha_1 \lessdot \hat 1\}$ with $\StatMon(\cM) = \aaa\aaa$ and $\peak(\cM) = \emptyset$.
  The condition of being $\cM$-peak-covering is thus vacuous, and we sum over all $S \subseteq \{1,2\}$.
  The sum in the second row corresponds to the two maximal chains $\cM = \{ \hat 0 \lessdot \alpha_i \lessdot \hat 1\}$ for $i \in \{2,3\}$ with $\StatMon(\cM) = \aaa\bbb$ and $\peak(\cM) = \{2\}$.
  In this case, the condition of being $\cM$-peak-covering excludes the empty set, and we sum over all nonempty subsets $S \subseteq \{1,2\}$.
  Observe that only in the second row, we have the second letter of $\StatMon(\cM)$ equal to~$\bbb$, so the only set for which $\bout(\cM,S) \neq 0$ is $S = \{1\}$, and in this case $\bout(\cM,S) = 1$.
  The above sum further expands to
  \begin{align*}
    \extPsi(\cL;y,\aaa,\bbb)
    &= (\aaa\aaa + y\bbb\aaa + y\aaa\bbb + y^2\bbb\bbb)\ +\ 2\big((y+y^2)\bbb\aaa + (1+y)\aaa\bbb \big) \\
    &= \aaa\aaa + (3y+2y^2)\bbb\aaa + (2+3y)\aaa\bbb + \bbb\bbb
  \end{align*}
  as expected.
\end{example}

Another consequence of \Cref{thm:refinement-of-ber} is that the Poincaré polynomial
of~$P$ is in fact encoded in its $\aaa\bbb$-index. To see this, we define another
substitution~$\iota$, which deletes the first letter from every $\aaa\bbb$-monomial, so
$\iota(\aaa^3\bbb\aaa + (1+y)\bbb\aaa) = \aaa^2\bbb\aaa + (1+y)\aaa$ for example. 
This gives us a way to obtain the Poincaré polynomial from the $\aaa\bbb$-index, a result which is similar in spirit to~\cite[Proposition 5.3]{billera-ehrenborg-readdy}.

\begin{corollary}\label{eq:exabpoin}
  For an $R$-labeled poset $P$ of rank $n$, the Poincaré polynomial is the coeffcient of $\aaa^{n-1}$ in
  $\iota\big(\omega\big(\Psi(P;\aaa,\bbb)\big)\big)$.
\end{corollary}

\Cref{thm:refinement-of-ber}
generalizes~\cite[Theorem~3.1]{billera-ehrenborg-readdy} relating the
$\aaa\bbb$-index of the lattice of flats of an oriented matroid with the
$\aaa\bbb$-index of its face poset. As a consequence, we see that
$\extPsi(P;y,\aaa,\bbb)$ is akin to a refinement of a $\ccc\ddd$-index. We
make this observation precise in the following corollary. 

\begin{corollary}
\label{cor:cd-index}
  For an $R$-labeled poset~$P$, there exists a polynomial $\Phi(P;
  \ccc_1,\ccc_2,\ddd)$ in noncommuting variables $\ccc_1,\ccc_2,\ddd$ such that 
  \[ 
    \extPsi(P; y, \aaa, \bbb) = \Phi(P;\, \aaa+y\bbb,\, \bbb+y\aaa,\, \aaa \bbb + y \bbb \aaa + y \aaa \bbb + y^2 \bbb \aaa) .
  \]
  In particular, the pullback $\aaa\bbb$-index $\pullPsi(P;\aaa,\bbb)$ is a polynomial in noncommuting variables $\ccc = \aaa+\bbb$ and $2\ddd = 2(\aaa\bbb+\bbb\aaa)$.
\end{corollary}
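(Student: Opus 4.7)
The plan is to deduce this statement from Corollary~\ref{thm:refinement-of-ber}, which writes $\extPsi(P;y,\aaa,\bbb) = \omega\bigl(\Psi(P;\aaa,\bbb)\bigr)$ for an explicit substitution $\omega$. Since $\omega$ acts linearly on $\aaa\bbb$-polynomials, it suffices to show that for every noncommutative monomial $w\in\{\aaa,\bbb\}^*$, the image $\omega(w)$ is itself a noncommutative monomial in the three letters
\[
  \ccc_1 := \aaa+y\bbb,\qquad \ccc_2 := \bbb+y\aaa,\qquad \ddd_{\mathrm{ext}} := \aaa\bbb + y\bbb\aaa + y\aaa\bbb + y^2\bbb\aaa.
\]
Once this is in hand, one defines $\Phi(P;\ccc_1,\ccc_2,\ddd_{\mathrm{ext}})$ to be the resulting polynomial in three noncommuting variables, and the displayed identity follows directly.

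The key combinatorial step is a unique-parse observation: two occurrences of the pattern $\aaa\bbb$ in a word cannot overlap, since otherwise a single letter would have to be simultaneously $\aaa$ and $\bbb$. Consequently the ``occurrences of $\aaa\bbb$'' appearing in the definition of $\omega$ form an unambiguous, disjoint family of length-two substrings, and the remaining letters of $w$ decompose uniquely into singletons, each of which is either an $\aaa$ not immediately followed by $\bbb$ or a $\bbb$ not immediately preceded by $\aaa$. Reading $\omega$ block by block then shows that each $\aaa\bbb$-block is sent to $\ddd_{\mathrm{ext}}$, each isolated $\aaa$ to $\ccc_1$, and each isolated $\bbb$ to $\ccc_2$, so $\omega(w)$ is the ordered product of these factors, proving the first assertion.

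For the ``in particular'' claim I would simply specialize $y=1$: one has $\ccc_1\big|_{y=1} = \ccc_2\big|_{y=1} = \aaa+\bbb = \ccc$ and $\ddd_{\mathrm{ext}}\big|_{y=1} = 2(\aaa\bbb+\bbb\aaa) = 2\ddd$, whence
\[
\pullPsi(P;\aaa,\bbb) = \extPsi(P;1,\aaa,\bbb) = \Phi(P;\ccc,\ccc,2\ddd)
\]
is visibly a polynomial in $\ccc$ and $2\ddd$ alone. The only step with real content is the unique-parse observation, and even that reduces to the one-line check that $\aaa\bbb$ cannot self-overlap; everything else is direct bookkeeping with the definition of $\omega$ provided by Corollary~\ref{thm:refinement-of-ber}.
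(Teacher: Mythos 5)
Your proposal is correct, and it takes essentially the same route as the paper: the paper states \Cref{cor:cd-index} as a direct consequence of \Cref{thm:refinement-of-ber} (``As a consequence, we see that $\extPsi(P;y,\aaa,\bbb)$ is akin to a refinement of a $\ccc\ddd$-index'') and does not give a separate proof in Sections 3 or 4. Your write-up makes the implicit step explicit: the non-self-overlapping property of the pattern $\aaa\bbb$ guarantees a unique parse of each $\aaa\bbb$-monomial into $\aaa\bbb$-blocks and isolated letters, so $\omega$ sends each monomial to a monomial in $\ccc_1$, $\ccc_2$, and $\ddd_{\mathrm{ext}}$, and the ``in particular'' clause follows by setting $y=1$. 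This is exactly the content the paper is relying on, so your proposal is a faithful filling-in of the omitted argument rather than a different approach.
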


\begin{remark}[The synthetic $\ccc\ddd$-index]\label{rem:matroids}
  Recall that the $\ccc\ddd$-index of a poset exists if the $\aaa\bbb$-index can be written as a polynomial in $\ccc = \aaa + \bbb$ and $\ddd = \aaa\bbb + \bbb\aaa$.
  Bayer, Fine, and Klapper observe that a poset satisfies the \emph{generalized Dehn-Sommerville relations} if and only if its $\ccc\ddd$-index exists and has nonnegative integer coefficients~\cite[Theorem 4]{bayer-klapper}.
  The $\ccc\ddd$-index of an Eulerian poset always exists (see~\cite[Theorem 2.1]{bayer-billera}) and has nonnegative coefficients when it comes from the face poset of a polytope (or, more generally, from a Gorenstein* poset)~\cite[Theorem 1.3]{karu}.
  
  In~\cite{billera-ehrenborg-readdy}, Billera, Ehrenborg, and Readdy give an elegant alternative proof of the nonnegativity of the $\ccc\ddd$-index of the face poset of an oriented matroid.
  They use the support map from \Cref{eq:supppre} to relate the $\aaa\bbb$-index of the lattice of flats to the $\aaa\bbb$-index of the face poset.
  In our language, they interpret (using posets and polytopes) the extended $\aaa\bbb$-index of an oriented matroid at $y=0$ and $y=1$.
  Every matroid admits an extended $\aaa\bbb$-index, and the evaluation at $y=0$ is the $\aaa\bbb$-index of its lattice of flats.
  This raises the natural question whether there is a geometric or poset-theoretic interpretation of the $y=1$ evaluation of the extended $\aaa\bbb$-index.
  For this reason, we call the $y=1$ evaluation of the extended $\aaa\bbb$-index rewritten in terms of $\ccc$ and $\ddd$ the \Dfn{synthetic $\ccc\ddd$-index}.
\end{remark}

  \begin{example}[The Fano matroid]
 The extended $\aaa\bbb$-index of the \emph{Fano matroid}~\cite[Example~6.6.2(1)]{BjornerEtAl} is 
  \begin{multline*}
    \aaa^3 + (7y+6)\aaa^2\bbb + (14y^2+21y+6)\aaa\bbb\aaa \\
    + (7y^2+14y+8)\aaa\bbb^2 + (8y^3+14y^2+7y)\bbb\aaa^2 \\
    + (6y^3+21y^2+14y)\bbb\aaa\bbb + (6y^3+7y^2)\bbb^2\aaa + y^3\bbb^3.
  \end{multline*}
  Setting $y=1$ and then $\ccc = \aaa + \bbb$ and $\ddd = \aaa\bbb + \bbb\aaa$ gives the synthetic $\ccc\ddd$-index $12\ccc\ddd + 28\ddd\ccc + \ccc^3$.
  A convex $3$-polytope with this $\ccc\ddd$-index would have $30$~vertices and $14$~facets; see~\cite{Meisinger}.
 Thus its polar dual polytope would have $14$~vertices and $30$~facets, contradicting the Upper Bound Theorem~\cite[Theorem 8.23]{ziegler}.
\end{example}

 \begin{example}[The Mac Lane matroid]
 We compute the synthetic $\ccc\ddd$-index of the \emph{Mac Lane matroid}; see~\cite[page~114]{bland-lasvergnas} and~\cite[Section~2]{ziegler-nonorientable}.
  We get the synthetic $\ccc\ddd$-index $18\ccc\ddd + 32\ddd\ccc + \ccc^3$, which is the $\ccc\ddd$-index of the polar dual of the convex hull of the following $20$ points 
  \begin{center}
    \begin{tabular}{ c c c c c }
    $\left(\frac{17}{18}, \frac{8}{9}, \frac{11}{12}\right)$,&
    $\left(\frac{17}{18}, \frac{5}{49}, \frac{5}{16}\right)$,&
    $\left(\frac{19}{51}, \frac{11}{15}, \frac{6}{43}\right)$,&
    $\left(\frac{17}{18}, \frac{4}{5}, \frac{1}{2}\right)$,&
    $\left(\frac{7}{32}, \frac{3}{4}, \frac{10}{13}\right)$,\\
    & & & & \\[-7pt]
    $\left(\frac{2}{3}, \frac{7}{9}, \frac{18}{19}\right)$,&
    $\left(\frac{1}{26}, \frac{19}{21}, \frac{11}{85}\right)$,&
    $\left(\frac{5}{6}, \frac{6}{7}, \frac{14}{15}\right)$,&
    $\left(\frac{3}{4}, \frac{4}{75}, \frac{2}{5}\right)$,&
    $\left(\frac{1}{9}, \frac{15}{16}, \frac{9}{19}\right)$,\\
    & & & & \\[-7pt]
    $\left(\frac{7}{8}, \frac{1}{5}, \frac{10}{47}\right)$,&
    $\left(\frac{21}{44}, \frac{10}{11}, \frac{18}{19}\right)$,&
    $\left(\frac{9}{10}, \frac{16}{17}, \frac{5}{6}\right)$,&
    $\left(\frac{3}{4}, \frac{16}{33}, \frac{13}{14}\right)$,&
    $\left(\frac{17}{18}, \frac{21}{43}, \frac{8}{9}\right)$,\\
    & & & & \\[-7pt]
    $\left(\frac{17}{18}, \frac{7}{8}, \frac{9}{11}\right)$,&
    $\left(\frac{3}{17}, \frac{11}{145}, \frac{19}{34}\right)$,&
    $\left(\frac{13}{28}, \frac{15}{16}, \frac{17}{31}\right)$,&
    $\left(\frac{9}{137}, \frac{13}{38}, \frac{3}{8}\right)$,&
    $\left(\frac{3}{8}, \frac{20}{69}, \frac{20}{21}\right)$.
  \end{tabular}
\end{center}
\end{example}

\begin{remark}[Oriented interval greedoids]
\label{rem:oig-version}
  The argument used for oriented matroids and their lattices of flats also applies to \emph{oriented interval greedoids}, where the analogue of \Cref{eq:supppre} is given in~\cite[Theorem~6.8]{saliola-thomas}.
  Since the lattice of flats of an interval greedoid is a semimodular lattice, it admits an $R$-labeling; see~\cite[Theorem~3.7]{bjorner-shellings}.
  Applying \Cref{thm:refinement-of-ber} and setting $y=1$ gives~\cite[Corollary~6.12]{saliola-thomas}.
\end{remark}

\begin{remark}[Distributive lattices \& $r$-signed Birkhoff posets]
\label{rem:jp-version}
  Ehrenborg discussed an $\omega$-like substitution for arbitrary distributive lattices~\cite{Ehrenborg-signed}.
  Remarkably, that substitution is equivalent to the substitution in \Cref{thm:refinement-of-ber} for $y = r-1 \in \N$.
  In that case of distributive lattices, the parameter~$r$ is a fixed integer (rather than a variable) carrying information about the fiber sizes of a certain support map.
  For a (not necessarily graded) finite poset~$P$, the \emph{$r$-signed Birkhoff poset} $J_r(P)$ is the collection of pairs $(F,f)$ where~$F$ is an \emph{order ideal} in~$P$ and~$f$ is a map from the maximal elements in~$F$ to the set $\{1,\dots,r\}$, with order relation given by
  \[
    (F,f) \leq (G,g) \quad \Longleftrightarrow \quad G\subseteq F \text{ and } f(x) = g(x) \text{ for all }x \in \max(F) \cap \max(G)\,.
  \]
  These posets were defined in~\cite{Hsiao,Ehrenborg-signed} and studied in connection to the Birkhoff lattice $J(P) = J_1(P)$.
  The map $z: J_r(P) \to J(P)$ with $(F,f) \mapsto F$ is an order- and rank-preserving poset surjection for which the fiber size of a chain~$\cC$ in $J(P)$ can---in the notation from the previous sections---be computed by
  \[
    \# z^{-1}(\cC) = \Poin_{\cC}(J(P);r-1)\,,
  \]
  see ~\cite[Proposition 5.2]{Ehrenborg-signed}.
  Since distributive lattices are modular, they admit $R$-labelings; see~\cite[Theorem~3.7]{bjorner-shellings}.
  Thus, applying~\Cref{thm:refinement-of-ber} for $y=r-1$ gives the first part of~\cite[Theorem 4.2]{Ehrenborg-signed}.
\end{remark}

\medskip
By examining the statistic from~\Cref{thm:combinatorial-interp}, we see that the
extended $\aaa\bbb$-index has symmetry among its coefficients. We encode this in
the following theorem, which generalizes the bivariate version
of~\cite[Theorem~A]{Maglione-Voll}. 

\begin{corollary}
\label{thm:symmetry-of-coeffs}
  Let~$P$ be an $R$-labeled poset of rank~$n$.
  Let moreover~$\cM$ be a maximal chain in~$P$ and let $E, E^c$ be complementary subsets of $\{1,\dots,n\} = E \cup E^c$.
  Then $\StatMon(\cM,E) = u_1\dots u_{n}$ and $\StatMon(\cM,E^c) = u_1^c\dots u_{n}^c$ are complementary words, \ie, $\{u_i,u_i^c\} = \{\aaa,\bbb\}$ for every $i \in \{1,\dots,n\}$.
  In particular, the coefficients of $y^\ell\mon$ and of $y^{n-\ell}\mon^c$ in $\extPsi(P;q,\aaa,\bbb)$ coincide,
  \[
    [y^\ell \mon]\ \extPsi(P;y,\aaa,\bbb) = [y^{n-\ell} \mon^c]\ \extPsi(P;y,\aaa,\bbb)\,,
  \]
  for complementary words~$\mon$ and~$\mon^c$ in $\aaa,\bbb$ and for $\ell\in
  \{0,\dots, n\}$.
\end{corollary}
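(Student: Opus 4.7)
The plan is to prove the corollary as a direct verification from the definition of $\StatMon(\cM,E)$, once we recognize that complementation of subsets, $E \leftrightarrow E^c$, is the obvious involution on the indexing set of the sum in \Cref{thm:combinatorial-interp}. Since $\#E^c = n - \#E$, this involution already matches the required shift $y^\ell \leftrightarrow y^{n-\ell}$; the only thing to check is that it induces complementation of the monomial word position-by-position.

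First I would fix a maximal chain $\cM$ and establish the pointwise claim by a short case analysis on the fixed letter $u_i$ of $\StatMon(\cM) = u_1 \cdots u_n$, which does not depend on $E$. For $i=1$, we have $u_1 = \aaa$ and \Cref{eq:firstindex} gives $v_1 = \bbb$ iff $1 \in E$, which toggles when we replace $E$ by $E^c$. For $i \geq 2$ with $u_i = \aaa$, the defining table shows that $v_i$ depends only on whether $i \in E$, and again flips under $E \leftrightarrow E^c$. For $i \geq 2$ with $u_i = \bbb$, the value of $v_i$ depends only on whether $i-1 \in E$, which also flips. In every case $\{v_i, v_i^c\} = \{\aaa,\bbb\}$, so $\StatMon(\cM,E)$ and $\StatMon(\cM,E^c)$ are complementary words of length~$n$.

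Having proved the first assertion, I would deduce the coefficient symmetry from \Cref{thm:combinatorial-interp}. By that theorem, $[y^\ell \mon]\extPsi(P;y,\aaa,\bbb)$ counts pairs $(\cM, E)$ with $\#E = \ell$ and $\StatMon(\cM,E) = \mon$. The involution $(\cM, E) \mapsto (\cM, E^c)$ is a bijection between such pairs and pairs $(\cM, E')$ with $\#E' = n - \ell$ and $\StatMon(\cM, E') = \mon^c$, which by the theorem enumerate $[y^{n-\ell}\mon^c]\extPsi(P;y,\aaa,\bbb)$.

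There is no serious obstacle here beyond careful bookkeeping: the mild subtlety is that the condition governing $v_i$ when $u_i = \bbb$ involves the index $i-1$ rather than $i$, so one might briefly worry that toggling $E$ does not produce a clean toggling of the word. The case analysis above dispels this because within each case the relevant condition is still a single membership query (into either $E$ or its shifted version), and complementing $E$ flips such a query in both cases uniformly. This is essentially the only thing to be attentive to while writing up the proof.
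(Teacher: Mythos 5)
Your proposal is correct and follows essentially the same route as the paper's proof: observe that complementing $E$ flips every letter of $\StatMon(\cM,E)$ to its opposite, then invoke \Cref{thm:combinatorial-interp}. The paper states the letter-flipping fact without the explicit three-case verification you supply, but the argument is the same.
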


We next turn toward the coarse flag Hilbert--Poincaré series introduced and
studied in~\cite{Maglione-Voll}. The numerator of this rational function is
defined in~\cite[Equation~(1.13)]{Maglione-Voll}, and we extend this definition
to graded posets via
\[
  \Num(P;y,t) = \sum_{\cC\text{ chain in } P\setminus\{\hat{0},\hat 1\}} \POIN[\{\hat 0\} \cup \cC]{P}{y}\cdot t^{\#\cC}(1-t)^{n-1-\#\cC}\, \in \Z[y,t]\,.
\]

\begin{example}
  Let~$\cL$ be the poset from \Cref{ex:face-poset2}.
  This poset has rank~$2$ and numerator polynomial
  \[
    \Num(P;y,t) = (1 + 3y + 2y^2)(1-t) + 3\cdot(1+y)^2t = 1 + 3y + 2y^2 + (2 + 3y + y^2)t\,.
  \]
\end{example}

The following corollary generalizes the property in \Cref{eq:ab01} to extended
$\aaa\bbb$-indices and relates them to coarse flag Hilbert--Poincaré series.
Recall from \Cref{eq:exabpoin}, the function $\iota$ that deletes the first
letter from every $\aaa\bbb$-monomial.

\begin{corollary}
\label{cor:enab0}
  For an $R$-labeled poset~$P$, we have
  \[
    \iota\big(\extPsi(P;y,\aaa,\bbb)\big) = \sum_{\cC\text{ chain in } P\setminus\{\hat 0,\hat 1\}} \Poin_{\{\hat 0\} \cup \cC}(P;y) \cdot \wtp_\cC\ \in \N[y]\langle \aaa,\bbb\rangle \,,
  \]
  where $\wtp_\cC = w_1\dots w_{n-1}$.
\end{corollary}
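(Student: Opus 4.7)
The plan is to split the defining sum for $\extPsi(P;y,\aaa,\bbb)$ according to whether the chain $\cC \subseteq P\setminus\{\hat 1\}$ contains $\hat 0$, apply $\iota$ termwise, and observe that chains missing $\hat 0$ contribute zero while chains containing $\hat 0$ biject with chains of $P\setminus\{\hat 0,\hat 1\}$ and reproduce the right-hand side.

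The first step is the elementary observation that $\iota\big((\aaa-\bbb)\cdot f\big)=0$ for every $f \in \Z[y]\langle\aaa,\bbb\rangle$. Indeed, $\iota$ is linear and strips the leading letter of every monomial, so $\iota(\aaa\cdot m) = m = \iota(\bbb\cdot m)$ for each monomial $m$. Consequently, for any chain $\cC$ in $P\setminus\{\hat 1\}$ with $\hat 0 \notin \cC$, the definition of the weight forces $w_0 = \aaa-\bbb$, so $\iota\big(\wt_\cC(\aaa,\bbb)\big) = 0$, and such chains contribute nothing after applying $\iota$.

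The second step is to analyze the surviving chains. For a chain $\cC$ in $P\setminus\{\hat 1\}$ containing $\hat 0$, write $\cC = \{\hat 0\}\cup\cC'$ where $\cC'$ is a chain in $P\setminus\{\hat 0,\hat 1\}$. Since $\Rank(\cC)\cap\{1,\dots,n-1\} = \Rank(\cC')$ and $0 \in \Rank(\cC)$, the weight factors as $\wt_\cC(\aaa,\bbb) = \bbb\cdot\wtp_{\cC'}(\aaa,\bbb)$, and $\iota$ strips the leading $\bbb$ from every monomial in this product to leave exactly $\wtp_{\cC'}(\aaa,\bbb)$. Moreover, by definition of the chain Poincaré polynomial, $\POIN[\cC]{P}{y} = \POIN[\{\hat 0\}\cup\cC']{P}{y}$. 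Summing over chains $\cC' \subseteq P\setminus\{\hat 0,\hat 1\}$ then yields the claimed identity.

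Finally, to verify that the right-hand side lies in $\N[y]\langle\aaa,\bbb\rangle$, I would invoke \Cref{thm:combinatorial-interp}: that theorem writes $\extPsi(P;y,\aaa,\bbb)$ as a manifestly nonnegative sum over pairs $(\cM,E)$ of a single monomial $\StatMon(\cM,E)$ with coefficient $y^{\#E}$, and since $\iota$ sends each such monomial to a single monomial with coefficient $+1$, nonnegativity is preserved. There is no real obstacle; the argument is essentially bookkeeping, the only subtlety being the vanishing observation in the first step, which makes $\iota$ compatible with the $(\aaa-\bbb)$ factors built into $\wt_\cC$.
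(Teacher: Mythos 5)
Your proof is correct and takes a genuinely different---and cleaner---route than the paper's.

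The paper's proof reruns the inclusion--exclusion machinery of \Cref{sec:inclusion-exclusion}: it restricts $\PIE_\ell(S)$ and $\topPIE_\ell(T)$ to sets containing $0$, defines $\topPIE^\circ_\ell(T)$, and re-derives the analogue of \Cref{thm:toppiecount} and \Cref{prop:topPIEStat} in that restricted setting before equating coefficients. Your argument instead works directly with the defining sum for $\extPsi$. The key new observation, which the paper does not isolate, is that $\iota\bigl((\aaa-\bbb)\cdot f\bigr) = 0$ for all $f$, since $\iota$ strips the leading letter so $\iota(\aaa m) = m = \iota(\bbb m)$; this makes chains with $\hat 0 \notin \cC$ (whose weight begins with $w_0 = \aaa-\bbb$) vanish under $\iota$. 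The surviving chains, all of the form $\{\hat 0\}\cup\cC'$ with $\cC'\subseteq P\setminus\{\hat 0,\hat 1\}$, have weight $\bbb\cdot\wtp_{\cC'}(\aaa,\bbb)$, and $\iota$ strips the leading $\bbb$ to leave exactly $\wtp_{\cC'}$; the chain Poincar\'e polynomial is manifestly the same for $\cC$ and $\{\hat 0\}\cup\cC'$. Nonnegativity follows, as you say, from \Cref{thm:combinatorial-interp} and the fact that $\iota$ sends monomials to monomials with coefficient $+1$. What your argument buys is economy and transparency: it needs only the vanishing identity plus the already-proven \Cref{thm:combinatorial-interp}. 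What the paper's heavier route buys is an explicit combinatorial description of the coefficients via $\topPIE^\circ_\ell(T)$, which it then reuses in the proof of \Cref{thm:lower-bound-on-coefficients}. If you only need \Cref{cor:enab0} and \Cref{thm:maglione-voll-conjecture}, your proof is preferable; if you want the $\topPIE^\circ$ machinery for the lower-bound corollary, the paper's route is the one that sets it up.
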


Specializing the equation from \Cref{cor:enab0}
via $\aaa \mapsto 1$ and $\bbb \mapsto t$
proves~\cite[Conjecture~E]{Maglione-Voll} and its generalization to $R$-labeled
posets. We collect this in the following.

\begin{corollary}
\label{thm:maglione-voll-conjecture}
  For an $R$-labeled poset~$P$, the coefficients of $\Num(P;y,t)$ are
  nonnegative.
\end{corollary}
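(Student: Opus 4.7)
The plan is to deduce the corollary as a direct specialization of \Cref{cor:enab0} under the substitution $\aaa \mapsto 1$ and $\bbb \mapsto t$. First, I would verify that this substitution transforms the right-hand side of \Cref{cor:enab0} into $\Num(P;y,t)$. For a chain $\cC = \{\cC_1 < \cdots < \cC_k\}$ in $P \setminus \{\hat{0}, \hat{1}\}$, every element has rank strictly between $0$ and $n$, so $\Rank(\cC)$ is a $k$-element subset of $\{1, \dots, n-1\}$. Consequently,
\[
  \wtp_\cC(1, t) = \prod_{i=1}^{n-1} w_i = t^{\#\cC}(1-t)^{n-1-\#\cC},
\]
where $w_i = t$ if $i \in \Rank(\cC)$ and $w_i = 1 - t$ otherwise. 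Comparing with the definition of $\Num(P;y,t)$, this yields exactly the desired expression on the right-hand side.

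Next I would argue that the left-hand side $\iota\big(\extPsi(P;y,1,t)\big)$ has nonnegative integer coefficients. By \Cref{thm:combinatorial-interp},
\[
  \extPsi(P;y,\aaa,\bbb) = \sum_{(\cM,E)} y^{\#E} \cdot \StatMon(\cM, E),
\]
which lies in $\N[y]\langle \aaa, \bbb \rangle$, since each coefficient $y^{\#E}$ is a monomial in $y$ with coefficient~$1$. The operator $\iota$ simply deletes the first letter of each $\aaa\bbb$-monomial and thus preserves membership in $\N[y]\langle \aaa, \bbb\rangle$. The further substitution $\aaa \mapsto 1$, $\bbb \mapsto t$ then transports it into $\N[y, t]$, and combining with the previous paragraph shows $\Num(P;y,t) \in \N[y,t]$.

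Since the substantive work has already been carried out in \Cref{thm:combinatorial-interp} and \Cref{cor:enab0}, there is no real obstacle here. The only care needed is in the weight calculation above: one must confirm that the exponents $\#\cC$ and $n-1-\#\cC$ appearing in the definition of $\Num(P;y,t)$ correctly match the number of $\bbb$-positions and $(\aaa-\bbb)$-positions in $\wtp_\cC$, which is immediate from the fact that no element of a chain in $P\setminus\{\hat{0},\hat{1}\}$ has rank $0$ or $n$.
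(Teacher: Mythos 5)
Your proposal is correct and follows exactly the paper's route: the paper also deduces this corollary by specializing \Cref{cor:enab0} via $\aaa \mapsto 1$, $\bbb \mapsto t$. Your explicit check that $\wtp_\cC(1,t) = t^{\#\cC}(1-t)^{n-1-\#\cC}$ and the nonnegativity argument (which you could have taken directly from the $\in \N[y]\langle\aaa,\bbb\rangle$ assertion already built into \Cref{cor:enab0}, though re-deriving it via \Cref{thm:combinatorial-interp} is also fine) merely fill in details the paper leaves implicit in its one-line remark.
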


Together with \Cref{eq:exabpoin}, we obtain
\begin{equation}
  \Poin(P;y) = [t^0]\ \Num(P;y,t). 
  \label{eq:poinfromnum}
\end{equation}
The substitutions in the previous corollaries show that \Cref{thm:combinatorial-interp} also gives analogous combinatorial interpretations for the coefficients of  $\iota\big(\extPsi(P;y,\aaa,\bbb)\big)$ and of $\Num(P;y,t)$. 

\begin{example}
  Recall that $\iota$ is the function that deletes the first letter of every
  $\aaa\bbb$-monomial. Thus, ignoring the first position in the computation in
  \Cref{ex:face-poset3} yields
  \[
    \iota\big(\extPsi(\cL;y,\aaa,\bbb)\big) = (1 + 3y + 2y^2)\aaa + (2 + 3y + y^2)\bbb\,.
  \]
  The substitution $\aaa \mapsto 1$ and $\bbb \mapsto t$ yields $\Num(\cL;y,t)$.
  Since $n=2$, we have
  \[
    [\aaa^{n-1}]\ \iota\big(\extPsi(\cL;y,\aaa,\bbb)\big) = [t^0]\ \Num(\cL;y,t) = \Poin(\cL;y)\,,
  \]
  also in agreement with \Cref{eq:exabpoin}.
\end{example}

\begin{remark}[Geometric semilattices]
  Note that~\cite[Conjecture~E]{Maglione-Voll} concerns all hyperplane arrangements (central and affine).
While the intersection posets of central hyperplane arrangements are geometric lattices and, thus, admit $R$-labelings~\cite[Example 3.8]{bjorner-shellings}, the intersection posets of affine arrangements are part of a more general family called \emph{geometric semilattices}, first explicitly studied by Wachs and Walker in~\cite{wachs-walker}.
A theorem of Ziegler shows that if $\cL$ is a geometric semilattice, then $\cL\cup\{\hat{1}\}$ admits an $R$-labeling~\cite[Theorem 2.2]{ziegler-shellability}.
Thus~\Cref{thm:combinatorial-interp} holds for intersection posets of affine arrangements (now with a formal unique maximal element included, which is consistent with the formulation in~\cite[Conjecture~E]{Maglione-Voll}).
\end{remark}

\begin{remark}[Implications for other zeta functions]\label{rem:implications-for-zetas}
The coarse flag Hilbert--Poincaré polynomial of a poset $P$ comes from a natural
specialization of its flag Hilbert--Poincaré series. The flag Hilbert--Poincaré
series is a rational function in $\mathbb{Q}[y](t_x \mid x \in P)$ given by 
\begin{align*}
  \mathsf{fHP}_P(y, \mathbf{t}) &= \sum_{\cC \text{ chain in } P\setminus \hat{0}} \POIN[\cC]{P}{y} \prod_{x\in \cC} \dfrac{t_x}{1-t_x}\,.
\end{align*}
The coarse flag Hilbert--Poincaré polynomial $\Num(P; y, t)$ is obtained by setting all the $t_x$ equal to $t$ and considering $(1 - t)^{\rank(P)}\mathsf{fHP}_P(y,t)$. 
Different specializations of $\mathsf{fHP}_P(y, \mathbf{t})$ yield other well-studied zeta functions like local Igusa zeta functions of hyperplane arrangements~\cite{Budur-Nero-Mustata}, motivic zeta functions of matroids from~\cite{Jensen-Kutler-Usatine}, and the conjugacy class counting zeta functions of certain group schemes defined in~\cite{Rossmann-Voll}.
Moreover, each of these is obtained from $\mathsf{fHP}_P(y, \mathbf{t})$ by a monomial substitution of the form $y = -p^{-1}$ and $t_x = p^{\lambda_x}t^{\mu_x}$ for some integers $\lambda_x$ and $\mu_x$, where $p$ is a prime and $t=p^{-s}$ for a complex variable~$s$; see \cite[Remark 1.3]{Maglione-Voll}.

In each of these settings, the poles and their orders play an important role in understanding how quickly the terms of these zeta functions grow.
Poles of zeta functions are challenging to compute in general---see for example \cite[Conjectures~2.3.1 \& 2.3.2]{Denef}, \cite[Conjecture~IV]{Rossmann}, and \cite[Question~1.8]{Rossmann-Voll}.
\Cref{thm:maglione-voll-conjecture} tells us that $\mathsf{fHP}_P(y,t)$ has exactly one pole at $t=1$ of order $\rank(P)$.
In particular, this result suggests there might be similar combinatorial interpretations of the numerators of zeta functions obtained from specializing $\mathsf{fHP}_P(y, \mathbf{t})$. 
\end{remark}

The specialization of $\Num(P;y,t)$ at $y=1$ was studied further for matroids
and oriented matroids by the second author and Kühne in~\cite{Kuhne-Maglione},
who showed $\Num(P;1,t)$ is the sum of $h$-polynomials of simplicial complexes
related to the chambers if $P$ is the lattice of flats of a real central
hyperplane arrangement. The following corollary gives a lower bound for the
coefficients of $\Num(P;1,t)$, whose proof also addresses the conjectured lower
bound in~\cite[Conjecture~1.4]{Kuhne-Maglione} generalized to the setting of
$R$-labeled posets---originally stated for geometric lattices.

\begin{corollary}
\label{thm:lower-bound-on-coefficients}
  Let~$P$ be an $R$-labeled poset of rank~$n$.
  The coefficients of $\Num(P;1,t)$ are bounded below by
   \[
     [t^k]\ \Num(P;1,t) \geq \binom{n-1}{k}\cdot \POIN{P}{1}\,.
   \]
\end{corollary}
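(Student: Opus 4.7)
The plan is to express both $[t^k]\Num(P;1,t)$ and $\POIN{P}{1}$ as counts of pairs $(\cM, E)$ via \Cref{thm:combinatorial-interp}, and then establish the bound chain by chain.

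First, by \Cref{thm:combinatorial-interp} combined with the identification $\Num(P;y,t) = \iota(\extPsi(P;y,\aaa,\bbb))|_{\aaa=1,\bbb=t}$ discussed after \Cref{cor:enab0}, setting $y = 1$ gives
\[
\Num(P;1,t) = \sum_{(\cM,E)} t^{N(\cM,E)},
\]
where $\cM$ ranges over the maximal chains of $P$, $E$ over the subsets of $\{1,\dots,n\}$, and $N(\cM,E)$ counts the $\bbb$'s among $v_2,\dots,v_n$ in $\StatMon(\cM,E) = v_1\cdots v_n$. Hence $[t^k]\Num(P;1,t) = \#\{(\cM,E) : N(\cM,E) = k\}$, and by~\eqref{eq:poinfromnum} the same formula with $k=0$ computes $\POIN{P}{1}$.

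Second, for a fixed maximal chain $\cM$ with $\StatMon(\cM) = u_1\cdots u_n$, I would determine $\#\{E : N(\cM,E) = k\}$. The recipe in \Cref{thm:combinatorial-interp} shows that for each $i \in \{2,\dots,n\}$ the letter $v_i$ is determined by a single bit $[j \in E]$ of $E$: on slot $j = i$ if $u_i = \aaa$ and on slot $j = i-1$ if $u_i = \bbb$. Two distinct letters $v_j$ and $v_{j+1}$ use the same slot precisely when $j \in \{2,\dots,n-1\}$, $u_j = \aaa$ and $u_{j+1} = \bbb$; at such a slot their dependences on $[j \in E]$ are opposite, so their joint contribution to $N$ is $1$ regardless of $E$. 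Denoting by $c(\cM)$ the number of these shared slots, a slot count (treating the boundary positions $j \in \{1, n\}$ carefully) leads to
\[
\#\{E : N(\cM,E) = k\} = 2^{2c(\cM)+1}\,\binom{n-1-2c(\cM)}{k-c(\cM)},
\]
since $c(\cM) + (c(\cM)+1)$ bits contribute an independent factor of $2$ each (coming from shared and unused slots), while the remaining $n-1-2c(\cM)$ active bits each toggle an independent $0/1$-contribution to $N$.

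Third, specializing this formula to $k = 0$ shows that only chains with $c(\cM) = 0$ contribute, giving $\POIN{P}{1} = 2\cdot\#\{\cM : c(\cM) = 0\}$. Summing over $\cM$ and discarding the nonnegative contribution of chains with $c(\cM) \geq 1$ yields
\[
[t^k]\Num(P;1,t) = \sum_\cM 2^{2c(\cM)+1}\binom{n-1-2c(\cM)}{k-c(\cM)} \geq \sum_{\cM : c(\cM) = 0} 2\binom{n-1}{k} = \binom{n-1}{k}\POIN{P}{1}.
\]
The main obstacle is the slot-counting in the second step, with particular care needed at the boundary positions where only a single letter can depend on a given bit.
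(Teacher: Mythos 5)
Your proof is correct, and it takes a genuinely different route from the paper. The paper reduces \Cref{eqn:LB} to the inequality \Cref{eq:boundingsets} between sizes of the sets $\topPIE^{\circ}_{\ell}(T)$ and $\topPIE^{\circ}_{\ell}(\emptyset)$, and then proves \Cref{lem:injective} by exhibiting, for each $T$, an explicit injection $\bigcup_{\ell}\topPIE^{\circ}_{\ell}(\emptyset) \hookrightarrow \bigcup_{\ell}\topPIE^{\circ}_{\ell}(T)$, the construction of which requires a careful, case-dependent choice of the multichain $\cD_{\cM,T,\ell}$. You instead stay entirely at the level of pairs $(\cM,E)$ furnished by \Cref{thm:combinatorial-interp}: fixing a maximal chain $\cM$, you analyze the "slot" structure determined by $\StatMon(\cM)$ and derive the closed formula
\[
\#\{E : N(\cM,E) = k\} = 2^{2c(\cM)+1}\binom{n-1-2c(\cM)}{k-c(\cM)},
\]
where $c(\cM)$ is the number of indices $j\in\{2,\dots,n-1\}$ with $u_j=\aaa$, $u_{j+1}=\bbb$. (I checked the bookkeeping: $c(\cM)$ shared slots each forcing a contribution of $1$ to $N$, $n-1-2c(\cM)$ single slots each toggling $N$ by one, and $c(\cM)+1$ unused slots, so the slot count matches $n$.) Specializing to $k=0$ recovers $\POIN{P}{1}=2\cdot\#\{\cM : c(\cM)=0\}$, and the lower bound follows by discarding the nonnegative contribution of chains with $c(\cM)\geq 1$. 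Your method buys you more than the stated bound: an exact formula for $[t^k]\Num(P;1,t)$ in terms of a single per-chain statistic, from which one can immediately read off, for instance, when equality holds. The paper's injection, by contrast, is a more qualitative argument that sits naturally in the machinery built around $\topPIE^{\circ}$ and avoids having to work out the exact slot combinatorics, at the cost of a more delicate verification of well-definedness and injectivity.
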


\section{The combinatorial extended $\aaa\bbb$-index and quasisymmetric functions}
\label{sec:connections}

So far, we have only presented our results for $R$-labeled posets.
In this section, we consider more general graded posets with arbitrary edge labelings and explore connections to quasisymmetric functions.
We end with a Schur-positivity conjecture of the map~$\omega$ from \Cref{thm:refinement-of-ber} applied to a Schur function.

\medskip

\Cref{thm:combinatorial-interp} shows that the extended $\aaa\bbb$-index of an $R$-labeled poset has nonnegative coefficients.
Nonnegativity may fail, however, for posets that do not admit $R$-labelings.
For example, the weak order for the symmetric group $\mathfrak{S}_3$ (the hexagon poset) does \emph{not} admit an $R$-labeling, and its extended $\aaa\bbb$-index is
\begin{multline*}
  \aaa^{3} + \left(2 y + 1\right) \aaa^{2}\bbb + \left(2 y + 1\right) \aaa \bbb \aaa + \left(2 y^{2} - 1\right) \aaa \bbb^{2} + \\ 
  \left(-y^{3} + 2 y\right) \bbb \aaa^{2} + \left(y^{3} + 2 y^{2}\right) \bbb \aaa \bbb + \left(y^{3} + 2 y^{2}\right) \bbb^{2}\aaa + y^{3} \bbb^{3}\,.
\end{multline*}
Observe that this equals the $\omega$-evaluation $\omega\big( \aaa^{3} + \aaa^{2}\bbb + \aaa \bbb \aaa - \aaa \bbb^{2} \big)$ of its $\aaa\bbb$-index, in accordance with \Cref{conj:refinement-of-ber}.

\medskip

In this section, we define a combinatorial analogue of the extended $\aaa\bbb$-index and explore some of its properties.
This combinatorial analogue is \emph{manifestly positive} for all posets and closely related to certain quasisymmetric function identities.

\subsection{The combinatorial extended $\aaa\bbb$-index}

Using the right-hand side in \Cref{thm:combinatorial-interp}, we define the (\Dfn{combinatorial}) \Dfn{extended $\aaa\bbb$-index} of a finite edge-labeled graded poset~$P$ via
\[
  \combextPsi(P;y,\aaa,\bbb) = \sum_{(\cM,E)} y^{\#E}\cdot\StatMon(\cM,E) \ \in \N[y]\langle\aaa,\bbb\rangle\,.
\]
So if $P$ is $R$-labeled, then $\combextPsi$ and $\extPsi$ coincide, thanks to \Cref{thm:combinatorial-interp}.
For later reference, we also set $\combPsi(P;\aaa,\bbb) = \combextPsi(P;0,\aaa,\bbb)$ to be the (combinatorial) $\aaa\bbb$-index and $\combpullPsi(P;\aaa,\bbb) = \combextPsi(P;1,\aaa,\bbb)$ to be the (combinatorial) pullback $\aaa\bbb$-index.

\medskip

While this combinatorial description is in general not linked to the Poincaré polynomial, the proofs of \Cref{thm:refinement-of-ber,,cor:peakenumeration} still hold.
We record this in the following corollary.

\begin{corollary}
  \label{cor:comb-peakenumeration}
We have
\[
  \combextPsi(P;y,\aaa,\bbb)
  = \omega\big(\combPsi(P;\aaa,\bbb)\big) = \sum_{(\cM,S)} (1+y)^{\# S}\cdot y^{\bout(\cM,S)} \cdot \wt_S \,,
\]
where the sum ranges over all maximal chains~$\cM$ and all $\cM$-peak-covering $S \subseteq \{1,\dots,n\}$, where $\wt_S = w_1\dots w_n$ is given in \Cref{eqn:ext-weights}, and where $\bout(\cM,S)$ is the number of positions $i \notin S$ for which $u_i = \bbb$ in $\StatMon(\cM) = u_1\dots u_n$ as defined in \Cref{eq:statmon}.
\end{corollary}

In particular, $\combextPsi(P;y,\aaa,\bbb)$ is a polynomial in $\ccc_1 = \aaa+y\bbb, \ccc_2 = \bbb+y\aaa$ and $\ddd = \aaa\bbb+y\bbb\aaa+y\aaa\bbb+y^2\bbb\aaa$.
This means that $2\cdot \combextPsi(P;1,\aaa,\bbb)$ is an $\aaa\bbb$-analogue of the \emph{peak enumerator} from~\cite[Definition 7.1]{bergeron-mykytiuk-sottile-vanWilligenburg2000}; see \Cref{eq:Xicpull} below.

\subsection{Connections to quasisymmetric functions and $P$-partitions}\label{subsec:QSYM-connection}

In this section, we discuss some of the connections between the combinatorial extended $\aaa\bbb$-index and the theory of \emph{quasisymmetric functions}.
In order to make this precise, we first need to collect a few relevant definitions.
For more details, see~\cite[Section~1.4]{stembridge-p-partitions} and~\cite[Sections~6~\&~7]{bergeron-mykytiuk-sottile-vanWilligenburg2000}.

\medskip

Let $S = \{s_1 <\dots <s_k\}$ be a subset of $\{1,\dots,n\}$.
The \Dfn{monomial quasisymmetric function}~$M_S$ is the power series
\[
  M_S = \sum_{i_1 < i_2 < \cdots < i_k < i_{k+1}} x_{i_1}^{s_1} x_{i_2}^{s_2 - s_1} \cdots x_{i_k}^{s_k-s_{k-1}}x_{i_{k+1}}^{n+1   - s_k} \quad \in \mathbb{Q}[[x_1,x_2,x_3,\dots]]\,.
\]
Note that~$M_S$ is homogeneous of degree~$n+1$ and---although we surpress it in the notation---implicitly depends on~$n$.
The ring of \Dfn{quasisymmetric functions} $\QSym$ is the (linear) span of $M_{\bullet} = 1$ and all $M_S$ for $n \geq 0$.
Gessel introduced $\QSym$ to study $P$-partitions and gave a second basis
\begin{align}
  \label{eq:FSMT}
  F_{S} = \sum_{\{1,\dots,n\} \supseteq T \supseteq S} M_T \in \QSym\,,
\end{align}
which is related to the monomial quasisymmetric functions via inclusion-exclusion~\cite[Equation~2]{gessel}.
Following~\cite[Section~3]{Ehrenborg-Readdy-transforms}, we define a vector space isomorphism
\[
  \begin{array}{lcccl}
    \Xi : & \QQ\langle\aaa,\bbb\rangle &\longrightarrow& \QSym & \\[5pt]
          & \wt_T &\longmapsto& M_T &\,.
  \end{array}
\]
Comparing~\Cref{eq:FSMT} with the relation
\[
 \mon_S = \sum_{T \supseteq S} \wt_T \ \in \QQ\langle\aaa,\bbb\rangle
\]
yields
\[
  \Xi: \mon_S \longmapsto F_S\,.
\]
Using the isomorphism~$\Xi$, we can view the map~$\omega$ from \Cref{thm:refinement-of-ber} as a map
\[
  \omega : \QSym \longrightarrow \QSym \otimes \QQ[y]
\]
which is given by
\begin{equation}
\label{eq:quasiomega}
  F_S \mapsto \omega(F_S) = \Xi\big(\omega(\mon_S)\big) \,.
\end{equation}
As an example, consider $S = \{2\} \subseteq \{1,2,3\}$.
Then $\mon_S = \aaa\bbb\aaa$ and
\begin{align*}
  \omega(\mon_S)
  &= \big((1+y)\aaa\bbb + y(1+y)\bbb\aaa \big)(\aaa+y\bbb) \\
  &= (1+y)\aaa\bbb\aaa + y(1+y)\aaa\bbb\bbb + y(1+y)\bbb\aaa\aaa + y^2(1+y)\bbb\aaa\bbb\,,
\end{align*}
implying
\[
  \omega(F_S) = (1+y)F_{\{2\}} + y(1+y)F_{\{2,3\}}+ y(1+y)F_{\{1\}}+ y^2(1+y)F_{\{1,3\}}\in \QSym\otimes \QQ[y]\,.
\]
Next we use~$\Xi$ to turn relations concerning $\aaa\bbb$-indices into relations concerning quasisymmetric functions.

\medskip

Given a finite poset~$P$ together with an injective vertex labeling~$\gamma : P \rightarrow \N$, the \Dfn{weight enumerator} is the generating function of all \Dfn{$P$-partitions}.
As in~\cite[Equation 1.7]{stembridge-p-partitions}, the peak enumerator $\Gamma(P)$ is equal to the following quasisymmetric function
\begin{equation}
 \Gamma(P) = \sum_{\pi \in \LinExt(P)} F_{\Des(\pi)}
\label{eq:GammaF}
\end{equation}
where $\LinExt(P)$ is the set of linear extensions of~$P$ and
\[
 \Des(\pi) = \big\{ i \in \{1,\dots,n-1\} \mid \gamma(p_i) > \gamma(p_{i+1})\big\}
\]
is the set of \Dfn{descents} of a linear extension $\pi = [p_1,\dots,p_n]$.
Note that the set $\LinExt(P)$ of linear extensions can be identified with the set of maximal chains in its Birkhoff lattice~$J(P)$ by recording which element was added to the order ideal at each step of the chain; see \Cref{rem:jp-version}.
Given a vertex labeling~$\gamma$ of~$P$, we obtain an edge labeling~$\lambda_\gamma$ of~$J(P)$ with the property that $\lambda_\gamma$ is an $R$-labeling if and only if~$\gamma$ is a natural labeling of~$P$, \ie , if $p < q$ in~$P$ implies $\gamma(p) < \gamma(q)$.

\medskip

For example, \Cref{eq:GammaF} corresponds---via $\Xi$---to \Cref{eq:abstat}.
Now setting $y=0$ in the combinatorial extended $\aaa\bbb$-index and applying $\Xi$ gives
\[
  \Xi\big(\iota\combPsi(J(P);\aaa,\bbb)\big)
  = \Gamma(P)\,.
\]
where $\iota$ deletes the first letter of every monomial.
This first letter (which is equal to~$\aaa$ for every monomial in $\StatMon(\cM,\emptyset)$) is removed because it does not come from the comparison between labels along the chain as explained in \Cref{rem:bottom-element-toss-out}.
We illustrate this in the following example.

\begin{example}
  Consider the following pair of~$P$ and~$J(P)$.
  \[
    J\begin{pmatrix}\begin{tikzpicture}[scale=1]
    \node[invisivertex] (1)  at (0,0){$1$};
    \node[invisivertex] (2) at (1,0){$2$};
    \node[invisivertex] (3) at (.5,1){$3$};
    \draw (1) -- (3);
    \draw (2) -- (3);
  \end{tikzpicture}
  \end{pmatrix} = ~~
  \begin{matrix}
  \begin{tikzpicture}[scale=.75]
    \node[invisivertex] (empt)  at (1,0){$\emptyset$};
    \node[invisivertex] (1) at (0,1){$1$};
    \node[invisivertex] (2) at (2,1){$2$};
    \node[invisivertex] (12) at (1,2){$12$};
    \node[invisivertex] (123) at (1,3){$123$};
    \draw (empt) -- (1);
    \draw (empt) -- (2);
    \draw (1) -- (12);
    \draw (2) -- (12);
    \draw (12) -- (123);
  \end{tikzpicture}
  \end{matrix}~~.
  \]
  Then we have
  \[
    \Gamma(P) = F_{\emptyset} + F_{\{1\}} = M_{\emptyset} + 2M_{\{1\}} + M_{\{2\}} + 2M_{\{1,2\}}
  \]
  for the two linear extensions $[1,2,3]$ and $[2,1,3]$ of~$P$ with the respective descent sets.
  On the other hand, we obtain
  \[
    \iota\combPsi(J(P);\aaa,\bbb) = \iota(\aaa\aaa\aaa + \aaa\bbb\aaa) = \aaa\aaa+\bbb\aaa = (\aaa - \bbb)^2 + 2\bbb (\aaa - \bbb) + (\aaa - \bbb) \bbb + 2 \bbb \bbb\,.
  \]
\end{example}

Next we observe that the pullback $\aaa\bbb$-index is connected to the weight enumerator for \emph{enriched $P$-partitions} as defined in~\cite[Section 2]{stembridge-p-partitions}.
From~\cite[Equation~2.4]{stembridge-p-partitions}, the weight enumerator $\Delta(P)$ of an enriched $P$-partition can be expressed as a sum of quasisymmetric functions indexed by peak sets.
Setting $y=1$ in \Cref{cor:comb-peakenumeration} and comparing to \cite[Proposition 2.2]{stembridge-p-partitions} gives
\begin{equation}
\label{eq:Xicpull}
  \Xi : 2\cdot\iota\big(\combextPsi(J(P);1,\aaa,\bbb)\big) \mapsto \Delta(P)\,.
\end{equation}

In~\cite[Theorem 3.1(c)]{stembridge-p-partitions}, Stembridge shows that the weight enumerators~$\Delta(P)$ linearly span the peak algebra~$\Pi \subseteq \QSym$.
Concretely, $\Delta(P)$ can be written as a sum over maximal chains of $J(P)$.
For a maximal chain~$\cM$ in~$J(P)$ with corresponding linear extension $\pi = [p_1,\dots,p_n] \in \LinExt(P)$, let the peak set be
\[
  S = \big\{ i \in \{2,\dots, n-1\} \mid \gamma(p_{i-1}) < \gamma(p_i) > \gamma(p_{i+1}) \big\}\,.
\]
Following~\cite[Equation~5.3]{bergeron-mykytiuk-sottile-vanWilligenburg2000} and \cite[Proposition 2.2]{stembridge-p-partitions}, define  
\[
  \Theta_S = \sum_{S\subseteq T \subseteq S \cup (S+1)} 2^{\#T + 1} M_T\,,
\]
where $S+1 = \{ i+1 \mid i \in S\}$.
The \Dfn{peak algebra}~$\Pi \subseteq \QSym$ is the linear span of all $\Theta_S$ together with $M_\bullet = 1$.
Now combining \Cref{eq:thetadef} from page~\pageref{eq:thetadef} and \Cref{cor:comb-peakenumeration} gives
\begin{equation}
\label{eq:XiTheta}
  \Xi : 2\cdot\iota\big(\theta(\cM,1)\big) \mapsto \Theta_S\,.
\end{equation}

\subsection{Connections to Schur functions}
\label{sec:schur}

In~\cite[Equation~(1.8)]{stembridge-p-partitions}, Stembridge shows how to obtain (skew) Schur functions as $P$-partition enumerators of certain posets given in~\cite[Section~1.3]{stembridge-p-partitions}.
For a given (top-left aligned) Ferrers diagram of a partition $\lambda \vdash n$, the poset is obtained by ordering its cells from left to right and from bottom to top, and labeling them row by row with the numbers~$1$ through~$n$. The poset is therefore the set $\{1,\dots, n\}$ where $i\leq j$ if and only if the cell labeled by $i$ is northwest of the cell labeled by $j$.
Observe that this is usually not a natural labeling.

\begin{corollary}
  Let~$\lambda\vdash n$ be a partition identified with the above poset of its labeled Ferrers diagram.
  Then
  \[
    s_\lambda = \Xi\big(\iota\combPsi(J(\lambda);\aaa,\bbb)\big)\,.
  \]
\end{corollary}

\begin{example}
  Consider $\lambda = (2,2)\vdash 4$.
  The poset structure on its diagram is then
  \[
  \begin{matrix}
  \begin{tikzpicture}[scale=.75]
    \node[invisivertex] (empt)  at (1,0){$3$};
    \node[invisivertex] (1) at (0,1){$1$};
    \node[invisivertex] (2) at (2,1){$4$};
    \node[invisivertex] (12) at (1,2){$2$};
    \draw (empt) -- (1);
    \draw (empt) -- (2);
    \draw (1) -- (12);
    \draw (2) -- (12);
  \end{tikzpicture}
  \end{matrix}
  \]
  and we obtain its two linear extensions $[3,1,4,2]$ and $[3,4,1,2]$.
  Thus,
  \begin{equation*}
    s_{[2,2]} = F_{\{1,3\}} + F_{\{2\}} 
              = \Xi(\bbb\aaa\bbb + \aaa\bbb\aaa)
              = \Xi\big(\iota(\aaa\bbb\aaa\bbb + \aaa\aaa\bbb\aaa)\big)
              = \Xi\big(\iota\combPsi(J(\lambda);\aaa,\bbb)\big)\,.
  \end{equation*}
  In addition, we can compute $\omega(s_{[2,2]})$ as given in \Cref{eq:quasiomega} by
  \begin{align*}
    \omega(s_{[2,2]})
    &= \Xi\big(\omega(\bbb\aaa\bbb+\aaa\bbb\aaa)\big) \\
    &= \Xi\big( (y^2+y)\aaa\aaa\bbb + (y^3+y^2+y+1)\aaa\bbb\aaa + (y^2+y)\aaa\bbb\bbb + \\
    &\hspace*{35pt}+(y^2+y)\bbb\aaa\aaa + (y^3+y^2+y+1)\bbb\aaa\bbb + (y^2+y)\bbb\bbb\aaa \big) \\
    &= (y^2+y) F_{\{3\}}+ (y^3+y^2+y+1)F_{\{2\}}+ (y^2+y)F_{\{2,3\}} \\
    &\hspace*{35pt}+(y^2+y)F_{\{1\}} + (y^3+y^2+y+1)F_{\{1,3\}} + (y^2+y)F_{\{1,2\}}\,.
  \end{align*}
  It turns out that this is a symmetric function, and its Schur expansion is
  \[
    \omega(s_{[2,2]}) = (y^2+y) s_{[2,1,1]} + (y^3+1) s_{[2, 2]} + (y^2+y) s_{[3, 1]}\,.
  \]
  The analogous calculation for $[3,2] \vdash 5$ yields
  \begin{align*}
    \omega(s_{[3,2]})
    &= (y^3+y^2)s_{[2, 1, 1, 1]} + (y^4+y^3+y^2+y)s_{[2, 2, 1]}\\
    &\hspace{25pt}+ (y^3+2y^2+y)s_{[3, 1, 1]}+ (y^3+y^2+y+1)s_{[3, 2]} + (y^2+y)s_{[4, 1]}\,.
  \end{align*}
\end{example}

These examples suggest the following conjecture\footnote{This conjecture was exhibited at the \emph{90th Séminaire Lotharingien de Combinatoire} in Bad Boll, Germany in September 2023 in collaboration with Darij Grinberg.}.

\begin{conjecture}
\label{conj:schurpos}
  For any partition $\lambda \vdash n$, the quasisymmetric function $\omega(s_\lambda)$ is symmetric and Schur positive.
  Specifically, for each $\mu\vdash n$, there exist $c_{\lambda}^{\mu}(y)\in \N[y]$ such that 
  \[
    \omega(s_\lambda) = \sum_{\mu \vdash n} c_{\lambda}^{\mu}(y)\cdot s_{\mu}\,.
  \] 
\end{conjecture}

After posting this paper to the arXiv, Ricky Liu proved the following theorem involving the Kronecker product (denoted by $\ast$) and Kronecker coefficients $g_{\lambda,\mu,\nu}$.

\begin{thm}[Liu]
  For any partition $\lambda \vdash n$,
  \[\omega(s_\lambda) = \sum_{k=0}^{n-1} (s_\lambda * s_{(n-k,1^k)}) y^k = \sum_{\mu \vdash n} c^\mu_{\lambda}(y)\cdot s_\mu,\] where \[c^\mu_{\lambda}(y) = \sum_{k=0}^{n-1} g_{\lambda,\mu,(n-k,1^k)} y^k \in \NN[y].\]
\end{thm}

The proof of this theorem is included as an appendix, see \Cref{thm:proof-of-conjecture} in \Cref{sec:appendix}.

\section{The combinatorial description of the extended $\aaa\bbb$-index}
\label{sec:proof-of-combinatorial-interp}

In this section we prove~\Cref{thm:combinatorial-interp} as well as~\Cref{thm:symmetry-of-coeffs,,thm:maglione-voll-conjecture,,thm:lower-bound-on-coefficients}.
First we use a property of $R$-labelings to rewrite the chain Poincaré polynomial as a sum over maximal chains (\Cref{lem:poincare-expansion}).
Then, in~\Cref{sec:inclusion-exclusion}, we use an inclusion-exclusion argument to give a combinatorial description of the coefficients of the extended $\aaa\bbb$-index.
The proof of~\Cref{thm:combinatorial-interp} and the corollaries are given in \Cref{sec:reinterp-coeffs} and come from reformulating this combinatorial description.

\medskip
Throughout this section, we fix an $R$-labeled poset~$P$ of rank~$n$ together
with an $R$-labeling~$\lambda$. We start by recalling the following rewriting of
the Möbius function in terms of maximal chains. 

\begin{lemma}[\!\!{\cite[Corollary 2.3]{bjorner-garsia-stanley}}]
\label{lem:moebiuscount}
  Let $X,Y \in P$ with $X \leq Y$.
  Then 
  \[
    (-1)^{\rank(Y) - \rank(X)}\mu(X,Y)
    =
    \#
    \big\{
    X = \cC_i \lessdot \cC_{i+1} \lessdot \dots \lessdot \cC_{j} = Y
    \mid
    \lambda(\cC_{i},\cC_{i+1}) > \dots > \lambda(\cC_{j-1},\cC_{j})
    \big\}\,.
  \]
\end{lemma}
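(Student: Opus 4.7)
The plan is to derive the identity from Philip Hall's formula
\[
  \mu(X,Y) = \sum_{\cC} (-1)^{\ell(\cC)},
\]
where the sum ranges over all chains $\cC : X = \cC_0 < \cC_1 < \cdots < \cC_{\ell(\cC)} = Y$ in $[X,Y]$, by collecting terms along a canonical refinement map to saturated chains defined via the $R$-labeling $\lambda$.

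First I would define a refinement map $r$ sending each chain $\cC$ in $[X,Y]$ to the saturated chain obtained by replacing each step $\cC_{i-1} < \cC_i$ with the unique $\lambda$-increasing saturated chain of $[\cC_{i-1},\cC_i]$ guaranteed by the $R$-labeling property. Fix a saturated chain $\cM : X = \cM_0 \lessdot \cM_1 \lessdot \cdots \lessdot \cM_n = Y$ and define its descent set
\[
  D(\cM) = \bigl\{\, i \in \{1,\dots,n-1\} : \lambda(\cM_{i-1},\cM_i) > \lambda(\cM_i,\cM_{i+1}) \,\bigr\}.
\]
The heart of the argument is identifying $r^{-1}(\cM)$ with the set of subsets $S \subseteq \{0,1,\dots,n\}$ containing $\{0,n\} \cup D(\cM)$, via $\cC \mapsto \{\,i : \cM_i \in \cC\,\}$. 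This uses the uniqueness clause of the $R$-labeling property: a segment $\cM_a \lessdot \cdots \lessdot \cM_b$ of $\cM$ coincides with the $\lambda$-increasing saturated chain of $[\cM_a,\cM_b]$ precisely when no position in $\{a+1,\dots,b-1\}$ is a descent of $\cM$, and stitching over the segments of $\cC$ translates this into $D(\cM) \subseteq S$.

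Once the fiber description is in place, I would collect Philip Hall's contributions fiber by fiber. Parameterizing $S \in r^{-1}(\cM)$ by $T = S \setminus (\{0,n\} \cup D(\cM)) \subseteq \{1,\dots,n-1\} \setminus D(\cM)$, one has
\[
  \sum_{\cC \in r^{-1}(\cM)} (-1)^{\ell(\cC)}
    \;=\; (-1)^{|D(\cM)|+1} \!\!\sum_{T \subseteq \{1,\dots,n-1\} \setminus D(\cM)}\!\! (-1)^{|T|},
\]
which vanishes unless $D(\cM) = \{1,\dots,n-1\}$, in which case it equals $(-1)^n$. Summing over all saturated chains $\cM$ in $[X,Y]$ yields
\[
  \mu(X,Y) = (-1)^n \cdot \#\bigl\{\, \text{falling maximal chains from } X \text{ to } Y \,\bigr\},
\]
and taking absolute values gives the lemma.

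The main technical step is the fiber description: one must carefully exploit the uniqueness clause in the $R$-labeling axiom to conclude that a segment of $\cM$ is its own $\lambda$-increasing refinement iff it contains no interior descents of $\cM$. Once this equivalence is in place, the remainder is a clean inclusion-exclusion that localizes Philip Hall's sum onto the falling maximal chains with uniform sign $(-1)^n$.
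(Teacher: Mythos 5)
Your proof is correct: the identification of the fiber $r^{-1}(\cM)$ with the subsets $S\supseteq\{0,n\}\cup D(\cM)$ uses the uniqueness clause of the $R$-labeling exactly as needed, and the resulting cancellation localizes Philip Hall's sum onto the falling chains with uniform sign $(-1)^{n}$ (the case $X=Y$ being trivial). The paper does not prove this lemma itself but quotes it from Björner--Garsia--Stanley, and your argument is essentially the standard proof of that cited result, so there is nothing to object to.
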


We distinguish multisets from sets by using two curly brackets, and a multichain is a totally ordered multiset.
For $k\geq 0$ and a chain $\cC = \{\cC_1 < \cdots < \cC_k\}$ in $P$, say that a multichain $\cD=\multichain{\cD_1 \leq \cdots \leq \cD_k}$ in~$P$ \Dfn{interlaces}~$\cC$ if
\begin{equation}\label{eqn:interlaced}
  \cC_1 \leq \cD_1 \leq \cC_2 \leq \cD_2 \leq \dots \leq \cC_k \leq \cD_k\,.
\end{equation}
Note that we allow~$\cD$ to be a multichain only because we could have $\cD_{i-1} = \cC_i = \cD_i$ for some position~$i$.
We call such a pair $(\cC,\cD)$ an \Dfn{interlacing pair}. 
For a chain $\cC$ in~$P$, let 
\[ 
  \Int(\cC) = \{ \cD ~|~ (\cC, \cD) \text{ is an interlacing pair}\}, 
\] 
the set of multichains interlacing~$\cC$.
For $(\cC,\cD)$ interlacing, we write
\begin{align*}
  \IRank(\cC,\cD)
  &= \big\{\rank(\cC_1)+1,\dots,\rank(\cD_1) \big\} \cup \dots \cup \big\{\rank(\cC_k)+1,\dots,\rank(\cD_k) \big\} \\
  &= \big\{ r \in \{1,\dots,n\} \ \big\vert\ \rank(\cC_i) < r \leq \rank(\cD_i) \text{ for some position } i \big\}\,,
\end{align*}
and we denote the cardinality of this set by
\[
  \irank(\cC,\cD) = \# \IRank(\cC,\cD) = \sum_{i=1}^k\rank(\cD_i) - \sum_{i=1}^k \rank(\cC_i)\,.
\]
A maximal chain $\cM = \{ \cM_0 \lessdot \cM_1 \lessdot \dots \lessdot \cM_n\}$ in~$P$ \Dfn{decreases along} the interval $[\cM_i,\cM_j]$ if
\[
  \lambda(\cM_i, \cM_{i+1}) > \dots > \lambda(\cM_{j-1}, \cM_{j})\,.
\]
Similarly, we say that $\cM$ \Dfn{weakly increases along} $[\cM_i,\cM_j]$ if
\[
  \lambda(\cM_i, \cM_{i+1}) \leq \dots \leq \lambda(\cM_{j-1}, \cM_{j})\,.
\]
We say $\cM$ is \Dfn{decreasing} (resp.\ \Dfn{weakly increasing}) if $\cM$ is decreasing (resp.\ weakly increasing) along $[\cM_0, \cM_n] = P$. Let $\incdec(\cC,\cD)$ be the set of maximal chains~$\cM$ in~$P$ that refine the underling chain from~\Cref{eqn:interlaced} which
\begin{itemize}
  \item decrease along all the intervals of the form $[\cC_i,\cD_i]$, and
  \item weakly increase along all the intervals of the forms $[\hat{0}, \cC_1]$, $[\cD_i,\cC_{i+1}]$ and $[\cD_k, \hat{1}]$.
\end{itemize}
We say that such a maximal chain $\cM \in \incdec(\cC,\cD)$ is \Dfn{alternating} with respect to $(\cC,\cD)$.

\begin{proposition}
\label{lem:poincare-expansion}
  Let~$P$ be an $R$-labeled poset, and let~$\cC$ be a chain in~$P$.
  Then
  \[
    \POIN[\cC]{P}{y} = \sum_{\ell \geq 0} \left( y^\ell\cdot \sum_{\substack{\cD \in \Int(\cC) \\ \irank(\cC,\cD)=\ell}} \# \incdec(\cC,\cD)\right).
  \]
\end{proposition}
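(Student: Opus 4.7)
The plan is to expand $\POIN[\cC]{P}{y}$ by its definition, rewrite each Möbius factor as a count of strictly decreasing chains via \Cref{lem:moebiuscount}, and then observe that the defining property of an $R$-labeling makes the weakly increasing pieces on either side of the interlacing data uniquely determined.

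First I would write $\cC = \{\cC_1 < \dots < \cC_k\}$ with $\cC_{k+1} = \hat 1$ and expand
\[
\POIN[\cC]{P}{y} = \prod_{i=1}^k \POIN{[\cC_i,\cC_{i+1}]}{y} = \prod_{i=1}^k \sum_{\cC_i \leq \cD_i \leq \cC_{i+1}} |\mu(\cC_i,\cD_i)|\, y^{\rank(\cD_i)-\rank(\cC_i)}.
\]
Distributing the product, the terms are indexed by tuples $(\cD_1,\ldots,\cD_k)$ with $\cC_i \leq \cD_i \leq \cC_{i+1}$, i.e.\ by multichains $\cD \in \Int(\cC)$. The exponent of $y$ in the corresponding term is exactly $\sum_i(\rank(\cD_i)-\rank(\cC_i)) = \irank(\cC,\cD)$, so after collecting by the value of $\ell = \irank(\cC,\cD)$, matching the right-hand side reduces to proving
\[
\prod_{i=1}^k |\mu(\cC_i,\cD_i)| = \#\incdec(\cC,\cD)
\]
for every interlacing pair $(\cC,\cD)$.

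Next I would apply \Cref{lem:moebiuscount} to rewrite each $|\mu(\cC_i,\cD_i)|$ as the number of maximal chains in $[\cC_i,\cD_i]$ that are strictly decreasing with respect to $\lambda$. Thus $\prod_i|\mu(\cC_i,\cD_i)|$ counts the number of ways to pick a strictly decreasing maximal chain in each interval $[\cC_i,\cD_i]$. To promote such a choice to an element of $\incdec(\cC,\cD)$ one must fill in the complementary intervals $[\hat 0,\cC_1]$, $[\cD_i,\cC_{i+1}]$ for $i=1,\ldots,k-1$, and $[\cD_k,\hat 1]$ by weakly increasing maximal chains. By the defining property of an $R$-labeling, each of these intervals admits a \emph{unique} weakly increasing maximal chain. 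So extending the decreasing data by these forced increasing fillers gives a bijection between tuples of strictly decreasing maximal chains in the $[\cC_i,\cD_i]$ and elements of $\incdec(\cC,\cD)$, establishing the displayed equality.

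The main thing to take care of is the degenerate cases in which some $\cD_i$ coincides with $\cC_i$ or with $\cD_{i-1}$ (so the multichain genuinely has repeats). In such a case the corresponding interval is a single point, there is exactly one (empty) maximal chain in it, it is vacuously both strictly decreasing and weakly increasing, and the Möbius contribution $\mu(\cC_i,\cC_i)=1$ is consistent with this. Once this is checked the bijection above is clean, and combining it with the earlier expansion yields the claimed identity.
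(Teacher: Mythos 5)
Your proof is correct and follows essentially the same route as the paper: expand $\POIN[\cC]{P}{y}$ by definition, distribute the product to index terms by interlacing multichains $\cD \in \Int(\cC)$, collect by $\ell = \irank(\cC,\cD)$, and then show $\prod_i |\mu(\cC_i,\cD_i)| = \#\incdec(\cC,\cD)$ by invoking \Cref{lem:moebiuscount} for the decreasing pieces and the defining uniqueness property of the $R$-labeling for the weakly increasing fillers. Your extra remark on the degenerate intervals (where $\cD_i = \cC_i$ or $\cD_{i-1} = \cC_i = \cD_i$) is a useful sanity check that the paper leaves implicit.
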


\begin{proof}
  We first rewrite the chain Poincaré polynomial, and then invoke~\Cref{lem:moebiuscount}. For $\cC = \{\cC_1 < \cdots < \cC_k\}$, we have 
  \begin{align*}
    \POIN[\cC]{P}{y} & = \prod_{i=1}^{k} \POIN{[\cC_i,\cC_{i+1}]}{y}
    =  \prod_{i=1}^{k} \left(\sum_{X \in [\cC_i,\cC_{i+1}]} \mu(\cC_i,X)\cdot (-y)^{\rank(X)-\rank(\cC_i)}\right)\\
    & =  \sum_{\ell\geq 0} \left(y^\ell\cdot \sum_{\substack{\cD \in \Int(\cC) \\\irank(\cC,\cD) = \ell}}\prod_{i=1}^{k} \left((-1)^{\rank(\cD_i, \cC_i)}\mu(\cC_i,\cD_i)\right)\right)\,.
  \end{align*}
  We may interpret $(-1)^{\rank(\cD_i, \cC_i)}\mu(\cC_i,\cD_i)$ via maximal chains from $\cC_i$ to $\cD_i$ using~\Cref{lem:moebiuscount}.
  From the definition of an $R$-labeling, each interval $[\cD_i,\cC_{i+1}]$ has a unique weakly increasing chain.
  By gluing together weakly increasing and decreasing pieces, we finally obtain, for $\cD\in \Int(\cC)$,
  \[
  \prod_{i=1}^{k} \left((-1)^{\rank(\cD_i, \cC_i)}\mu(\cC_i,\cD_i)\right) = \# \incdec(\cC,\cD) . \qedhere 
  \]
\end{proof}

\subsection{An inclusion-exclusion construction}
\label{sec:inclusion-exclusion}

The inclusion-exclusion construction starts with a set $\PIE_\ell(S)$ associated to a subset $S\subseteq \{0,\dots,n-1\}$ and a parameter~$\ell \geq 0$.
It is given by
\[
\PIE_\ell(S) =
\left\{
(\cC,\cD,\cM)\
\middle\vert\
\begin{matrix}
  (\cC,\cD) \text{ interlacing}\\
  \Rank(\cC) = S,\  \irank(\cC,\cD) = \ell\\
  \cM \in  \incdec(\cC,\cD)
\end{matrix}\
\right\}.
\]
In other words, we consider triples $(\cC,\cD,\cM)$ carrying the following data:
a chain~$\cC$ in~$P$ with the prescribed ranks~$\Rank(\cC) = S$,
a multichain $\cD$ interlacing~$\cC$ with $\irank(\cC,\cD) = \ell$, and
a maximal chain $\cM$ that is alternating with respect to the pair $(\cC,\cD)$.
The following lemma is immediate from the definition.

\begin{lemma}
\label{lem:AIncDec}
  For all $S \subseteq \{0, \dots, n-1\}$ and $\ell\geq 0$, we have
  \[
  \# \PIE_\ell(S) = \sum_{(\cC,\cD)} \# \incdec(\cC,\cD),
  \]
  where the sum ranges over all interlacing pairs $(\cC,\cD)$ with $\Rank(\cC)\! =\! S$ and $\irank(\cC,\cD)\! =\! \ell$.
\end{lemma}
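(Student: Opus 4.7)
The plan is a direct bookkeeping argument: partition the set $\PIE_\ell(S)$ according to the first two coordinates of its triples. By construction, an element of $\PIE_\ell(S)$ is a triple $(\cC,\cD,\cM)$ whose first two entries form an interlacing pair with $\Rank(\cC) = S$ and $\irank(\cC,\cD) = \ell$, while $\cM$ ranges over $\incdec(\cC,\cD)$. Hence the projection $(\cC,\cD,\cM) \mapsto (\cC,\cD)$ maps $\PIE_\ell(S)$ surjectively onto the indexing set of the sum on the right-hand side, and the fiber over a fixed interlacing pair $(\cC,\cD)$ is in canonical bijection with $\incdec(\cC,\cD)$.

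Summing the fiber sizes over the image yields
\[
  \#\PIE_\ell(S) = \sum_{(\cC,\cD)} \#\incdec(\cC,\cD),
\]
which is the claim. There is no genuine obstacle here: the lemma is essentially a rewriting of the definition of $\PIE_\ell(S)$, and the only thing to verify is that the constraints defining $\PIE_\ell(S)$ (namely $\Rank(\cC) = S$, $\irank(\cC,\cD) = \ell$, and the interlacing condition) match exactly the constraints on the summation range on the right-hand side. The role of this lemma is not in its proof but in setting up the subsequent inclusion-exclusion among the sets $\PIE_\ell(S)$, so we keep the argument to this brief unpacking.
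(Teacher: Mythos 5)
Your proof is correct and takes essentially the same approach as the paper: both arguments simply observe that $\PIE_\ell(S)$ decomposes according to its first two coordinates, with each fiber in bijection with the corresponding $\incdec(\cC,\cD)$.
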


\begin{remark}
  Given a graded poset~$P$ of rank~$n$ and a subset $S \subseteq \{0,\dots,n-1\}$, let $\alpha(S)$ denote the number of chains in~$P$ with prescribed ranks $S$, and let $\beta(T)$ be the signed sum $\sum_{S \subseteq T} (-1)^{\#(T\setminus S)} \alpha(S)$.
  The $\aaa\bbb$-index of~$P$ can then be expressed as
  \[
    \Psi(P;\aaa,\bbb) = \sum_{T\subseteq \{0,\dots,n-1\}} \beta(T) \cdot \StatMon_T
  = \sum_{T\subseteq \{0,\dots,n-1\}} \left(\sum_{S \subseteq T} (-1)^{\#(T\setminus S)} \alpha(S)\right)\cdot \StatMon_T,
  \]
  where $\StatMon_T = w_0\dots w_{n-1}$ with $w_i = \bbb$ if $i \in T$ and $w_i = \aaa$ if $i \notin T$.
  In this section we define an analogue of~$\alpha(S)$ for the extended $\aaa\bbb$-index, and use an inclusion-exclusion construction to obtain the analogue of~$\beta(S)$.
  This describes, in particular, how we use $R$-labelings since our analogue of~$\alpha(S)$ uses this machinery to interpret the Möbius function via chains as described in \Cref{lem:moebiuscount}.
\end{remark}

Define the embedding $\varphi_{S, T} : \PIE_\ell(S) \hookrightarrow \PIE_\ell(T)$ for $S \subseteq T \subseteq \{0,\dots,n-1\}$ given by
\[
  \varphi_{S,T}(\cC,\cD,\cM) = \big(\cC\cup\{\cM_r \mid r \in T \setminus S\},\ \cD\cup\{\cM_r \mid r\in T \setminus S\},\ \cM \big)\,.
\]
It is immediate that $\cM \in \incdec(\cC,\cD)$ implies $\cM \in \incdec(\cC\cup\{\cM_r\}, \cD\cup\{\cM_r\})$ for $r \notin \Rank(S)$, so this embedding is well-defined.
These maps are compatible with one another in the sense that $\varphi_{T,U} \circ \varphi_{S,T} = \varphi_{S,U}$ for $S \subseteq T \subseteq U$.
For $T\subseteq \{0,\dots,n-1\}$, this allows us to define the set
\[
  \topPIE_\ell(T) = \PIE_\ell(T) \big\backslash \bigcup_{S\subsetneq T} \varphi_{S,T}\big(\PIE_\ell(S)\big).
\]

\begin{proposition}
\label{lem:A-B-expansion}
  For $T\subseteq \{0,\dots,n-1\}$ and $\ell\geq 0$, we have
  \[
  \# \topPIE_\ell(T)
  = \sum_{S \subseteq T} (-1)^{\#T - \#S}
  \sum_{(\cC,\cD)} \# \incdec(\cC,\cD)\,,
  \]
  where the inner sum ranges over all interlacing pairs $(\cC,\cD)$ in~$P$ for which $\Rank(\cC) = S$ and $\irank(\cC,\cD) = \ell$.
\end{proposition}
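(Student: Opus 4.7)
The plan is to reduce the claim to a standard inclusion--exclusion on the Boolean lattice $2^T$. By \Cref{lem:AIncDec}, the inner sum on the right-hand side equals $\#\PIE_\ell(S)$, so it suffices to show
\[
  \#\topPIE_\ell(T) \;=\; \sum_{S\subseteq T}(-1)^{\#T-\#S}\,\#\PIE_\ell(S).
\]
For each $i\in T$, set $A_i = \varphi_{T\setminus\{i\},T}\bigl(\PIE_\ell(T\setminus\{i\})\bigr)\subseteq\PIE_\ell(T)$. Using compatibility $\varphi_{T\setminus\{i\},T}\circ\varphi_{S,T\setminus\{i\}}=\varphi_{S,T}$, every proper $S\subsetneq T$ satisfies $\varphi_{S,T}(\PIE_\ell(S))\subseteq A_i$ for any $i\in T\setminus S$. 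Hence the defining union in $\topPIE_\ell(T)$ collapses to $\bigcup_{i\in T}A_i$, and ordinary inclusion--exclusion yields
\[
  \#\topPIE_\ell(T) \;=\; \sum_{I\subseteq T}(-1)^{\#I}\,\#\!\bigcap_{i\in I}A_i.
\]

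The crux of the proof is the identification
\[
  \bigcap_{i\in I}A_i \;=\; \varphi_{T\setminus I,T}\bigl(\PIE_\ell(T\setminus I)\bigr) \qquad (I\subseteq T).
\]
The inclusion ``$\supseteq$'' is immediate from the compatibility of the $\varphi$'s. For ``$\subseteq$'', the plan is to unpack what it means for a triple $(\cC,\cD,\cM)\in\PIE_\ell(T)$ to lie in $A_i$: by injectivity of $\varphi_{T\setminus\{i\},T}$, this is equivalent to the local condition that the rank-$i$ element of $\cC$ and the corresponding rank-$i$ entry (or entries) of $\cD$ all coincide with $\cM_i$. Because insertion of elements at distinct ranks commutes---and preserves both the interlacing structure and $\irank$, as well as the increasing--decreasing property of $\cM$---these local conditions for different $i\in I$ are independent, and one may simultaneously strip off all positions in $I$ to obtain a preimage in $\PIE_\ell(T\setminus I)$.

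Once the intersection identity is in hand, injectivity of $\varphi_{T\setminus I,T}$ gives $\#\bigcap_{i\in I}A_i=\#\PIE_\ell(T\setminus I)$. Substituting $S=T\setminus I$ and reapplying \Cref{lem:AIncDec} to the right-hand side produces the claimed signed sum over interlacing pairs, completing the argument.

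The main obstacle is the ``$\subseteq$'' direction of the intersection identity. It requires a careful examination of the insertion rule defining $\varphi_{S,T}$: one must verify that ``lies in $A_i$'' truly depends only on rank-$i$ data, that the matched multiset entries of $\cD$ at rank $i$ are exactly as prescribed, and that simultaneous deletion at several ranks produces a genuine element of $\PIE_\ell(T\setminus I)$ (with interlacing, $\irank$, and the increasing--decreasing structure of $\cM$ all intact). This is essentially the combinatorial content of the proposition; everything else is formal inclusion--exclusion.
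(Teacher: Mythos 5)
Your proposal is correct and follows essentially the same route as the paper's proof: reduce via \Cref{lem:AIncDec} to the identity $\#\topPIE_\ell(T) = \sum_{S\subseteq T}(-1)^{\#T-\#S}\#\PIE_\ell(S)$, and obtain that by inclusion--exclusion on the defining union of $\topPIE_\ell(T)$. You make explicit the intersection identity $\bigcap_{i\in I}A_i = \varphi_{T\setminus I,T}\bigl(\PIE_\ell(T\setminus I)\bigr)$ that underlies the inclusion--exclusion step, which the paper compresses into a single appeal to ``the principle of inclusion-exclusion.''
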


\begin{proof}
  We have
  \[
  \#\topPIE_\ell(T) = \# \PIE_\ell(T) - \# \bigcup_{S\subsetneq T} \varphi_{S,T}\big(\PIE_\ell(S)\big)
   = \sum_{S \subseteq T} (-1)^{\#T - \#S} \# \PIE_\ell(S)\,,
 \]
 where the second equality follows from the principle of inclusion-exclusion.
 The statement follows from \Cref{lem:AIncDec}.
\end{proof}

We are now ready to give a first combinatorial description for the coefficients of the extended $\aaa\bbb$-index, which are given by the cardinalities of the sets $\topPIE_\ell(T)$.

\begin{thm}
\label{thm:toppiecount}
  For all $T\subseteq \{0, \dots, n-1\}$ and $\ell\geq 0$, 
  \[
    [y^\ell\cdot \mon_T]\ \extPsi(P;y,\aaa,\bbb) = \# \topPIE_\ell(T)\,,
  \]
  where $\mon_T = m_0\dots m_{n-1}$ with $m_i = \bbb$ if $i \in T$ and $w_i = \aaa$ if $i \notin T$.
\end{thm}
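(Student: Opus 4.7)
The plan is to directly expand the definition of $\extPsi(P;y,\aaa,\bbb)$ using \Cref{lem:poincare-expansion} to rewrite the chain Poincaré polynomial, and then extract the coefficient of $y^\ell \StatMon_T$ by expanding the weights $\wt_\cC(\aaa,\bbb)$. The inclusion–exclusion identity of \Cref{lem:A-B-expansion} then matches the resulting expression to $\#\topPIE_\ell(T)$ on the nose; so at this point the theorem should reduce to bookkeeping.

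First I would substitute \Cref{lem:poincare-expansion} into the definition of $\extPsi(P;y,\aaa,\bbb)$ to obtain
\[
  \extPsi(P;y,\aaa,\bbb) = \sum_{\ell \geq 0} y^\ell \sum_{\cC} \sum_{\substack{\cD \in \Int(\cC) \\ \irank(\cC,\cD)=\ell}} \#\incdec(\cC,\cD) \cdot \wt_\cC(\aaa,\bbb),
\]
where the outer double sum ranges over chains $\cC$ in $P \setminus \{\hat 1\}$ and multichains $\cD$ interlacing $\cC$. The only $\aaa\bbb$-dependence on the right sits inside $\wt_\cC(\aaa,\bbb)$, so extracting the coefficient of $\StatMon_T$ amounts to understanding $[\StatMon_T]\wt_\cC(\aaa,\bbb)$ in terms of $S := \Rank(\cC)$.

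Next I would compute $[\StatMon_T]\wt_\cC(\aaa,\bbb)$ directly from~\Cref{eqn:ext-weights}. Writing $S = \Rank(\cC)$, the word $\wt_\cC$ has a $\bbb$ at every position in $S$ and an $(\aaa - \bbb)$ at every position in $\{0,\dots,n-1\} \setminus S$. Expanding each factor $\aaa - \bbb$ shows that a nonzero coefficient of $\StatMon_T$ requires every position in $S$ to lie in $T$, \ie\ $S \subseteq T$, and that, when this holds, the coefficient equals $(-1)^{\#T - \#S}$ (choosing the $-\bbb$ summand at exactly the positions in $T\setminus S$). Substituting back yields
\[
  [y^\ell \StatMon_T] \extPsi(P;y,\aaa,\bbb)
     = \sum_{S \subseteq T} (-1)^{\#T - \#S}
       \sum_{(\cC,\cD)} \#\incdec(\cC,\cD),
\]
where the inner sum runs over interlacing pairs $(\cC,\cD)$ with $\Rank(\cC) = S$ and $\irank(\cC,\cD) = \ell$.

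Finally, I would invoke \Cref{lem:A-B-expansion}, whose right-hand side is literally the displayed sum above, to conclude that this coefficient equals $\#\topPIE_\ell(T)$. The only potential pitfall I see is a careful check of the index sets, namely that $\Rank(\cC) \subseteq \{0,\dots,n-1\}$ (ensured because $\cC$ lies in $P \setminus \{\hat 1\}$) so it can indeed be an arbitrary subset of $T$; given that, the argument requires no further calculation beyond the weight expansion.
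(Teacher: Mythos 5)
Your proof is correct and takes essentially the same approach as the paper: both expand $\wt_\cC(\aaa,\bbb)$ via the binomial factors $(\aaa-\bbb)$ to extract $[\StatMon_T]\wt_\cC = (-1)^{\#T-\#\Rank(\cC)}$ for $\Rank(\cC) \subseteq T$, use \Cref{lem:poincare-expansion} to turn the chain Poincaré polynomial into a count over interlacing pairs, and then invoke \Cref{lem:A-B-expansion} to identify the signed sum with $\#\topPIE_\ell(T)$. The only difference is the order of operations (you apply \Cref{lem:poincare-expansion} first, then extract the coefficient; the paper extracts the alternating sum over $T$ first), but the two manipulations commute and land on the identical intermediate expression.
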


\begin{proof}
  Rewriting the weight function in the definition of $\extPsi(P;y,\aaa,\bbb)$ gives
    \begin{align*}
      \extPsi(P;y,\aaa,\bbb) & = \sum_{\cC \text{ chain in }P\setminus \{\hat{1}\}} \POIN[\cC]{P}{y} \cdot \wt_\cC\\[5pt]
      = & \sum_{\cC \text{ chain in }P\setminus \{\hat{1}\}}~\sum_{\substack{T \subseteq \{0,\dots,n-1\} \\ \Rank(\cC) \subseteq T}} (-1)^{\#T\setminus\Rank(\cC)} \POIN[\cC]{P}{y}\cdot \mon_T.
    \end{align*}
  By specifying the ranks of these chains and rearranging the order of summation, we obtain
  \begin{align*}
    \extPsi(P;y,\aaa,\bbb)
    = & \sum_{S\subseteq \{0,\dots,n-1\}}~\sum_{\substack{\cC \text{ chain in }P\\\Rank(\cC) = S}}~\sum_{\substack{T \subseteq \{0,\dots,n-1\} \\ S \subseteq T}} (-1)^{\#T\setminus S} \POIN[\cC]{P}{y} \cdot \mon_T\\[5pt]
    = & \sum_{S \subseteq T \subseteq \{0,\dots,n-1\}}~\sum_{\substack{\cC \text{ chain in }P\\\Rank(\cC) = S}} (-1)^{\# T\setminus S} \POIN[\cC]{P}{y}\cdot \mon_T\,.
  \end{align*}
  Applying \Cref{lem:poincare-expansion,,lem:A-B-expansion} yields
  \begin{align*}
    \extPsi(P;y,\aaa,\bbb) &= \sum_{\ell\geq 0} y^{\ell} \sum_{T\subseteq \{0,\dots,n-1\}} \left(\sum_{S \subseteq T} (-1)^{\#(T\setminus S)} \sum_{(\cC,\cD)} \#\incdec(\cC,\cD) \right) \cdot \mon_T
    \\[5pt] &=
    \sum_{\ell\geq 0} y^{\ell} \sum_{T\subseteq \{0,\dots,n-1\}}\#\topPIE_\ell(T) \cdot \mon_T . \qedhere 
  \end{align*}
  \end{proof}

  \begin{example}
    Below we compute $\PIE_\ell(T)$ and $\topPIE_\ell(T)$ for the poset $\cL$ from~\Cref{ex:face-poset2}.
    We record a triple $(\cC,\cD,\cM) \in \PIE_\ell(T)$ as follows:
    a maximal chain~$\cM = \{\hat 0 \lessdot \alpha_i \lessdot \hat 1\}$ is recorded by~$\alpha_i$,~$\cC$ is given by~$\cM$ and the property $\Rank(\cC) = T$, and~$\cD$ is finally given by $\IRank(\cC,\cD) = E \subseteq \{1,2\}$.
    \begin{center}
      \begin{tabular}{|c|c||c|c|c|}
      \hline
      & & & & \\[-10pt]
      T & $\ell$ & $E$ & $\PIE_\ell(T)$ & $\topPIE_\ell(T)$\\[2pt]
      \hline
      & & & & \\[-10pt]
      $\emptyset$ &  $0$     & $\emptyset$ & $\alpha_1$ & $\alpha_1$\\
      $\{0\}$     &  $0$     & $\emptyset$ & $\alpha_1$ & $\emptyset$ \\
      $\{1\}$     &  $0$     & $\emptyset$ & $\alpha_1,\alpha_2,\alpha_3$ & $\alpha_2,\alpha_3$ \\
      $\{ 0,1 \}$ &  $0$     & $\emptyset$ & $\alpha_1,\alpha_2,\alpha_3$ & $\emptyset$ \\
       \hline
       & & & \\[-10pt]
      $\emptyset$ &  $1$     &    -         & $\emptyset$ & $\emptyset$\\
      $\{0\}$     &  $1$     & $\{1\}$     & $\alpha_1,\alpha_2,\alpha_3$ & $\alpha_1,\alpha_2,\alpha_3$ \\
      $\{1\}$     &  $1$     & $\{2\}$     & $\alpha_1,\alpha_2,\alpha_3$ & $\alpha_2,\alpha_3$ \\
      $\{ 0,1 \}$ &  $1$     & $\{1\},\{2\}$     & $\alpha_1,\alpha_2,\alpha_3$ & $\emptyset$ \\
       \hline
       & & & \\[-10pt]
      $\emptyset$ &  $2$     & -     & $\emptyset$ & $\emptyset$\\
      $\{0\}$     &  $2$     & $\{1,2\}$     & $\alpha_2,\alpha_3$ & $\alpha_2,\alpha_3$ \\
      $\{1\}$     &  $2$     & -     & $\emptyset$ & $\emptyset$  \\
      $\{ 0,1 \}$ &  $2$     & $\{1,2\}$     & $\alpha_1,\alpha_2,\alpha_3$ & $\alpha_1$ \\
      \hline
      \end{tabular}
    \end{center}
  \end{example}

\subsection{Reinterpreting the coefficients}
\label{sec:reinterp-coeffs}
Next we reinterpret $\topPIE_\ell(T)$ in order to prove \Cref{thm:combinatorial-interp}.
We show, for a given maximal chain~$\cM$ in~$P$ and a set~$E\subseteq \{1,\dots,n\}$, that there is a unique interlacing pair~$(\cC,\cD)$ and a unique set~$T\subseteq\{0,\dots,n-1\}$ such that
\begin{equation}
  \IRank(\cC,\cD) = E\quad \text{and}\quad (\cC,\cD,\cM) \in \topPIE_{\#E}(T)\,.
  \label{eq:uniquecond}
\end{equation}
For a set $E\subseteq\{1,\dots,n\}$, let $I_E,J_E \subseteq \{1,\dots,n\}$ denote the set of ranks where intervals of consecutive elements outside and inside of~$E$ end, respectively.
In symbols,
\[
  I_E = \big\{i \in \{1,\dots,n\} \mid i \not\in E, i+1\in E \big\}
  \quad\text{and}\quad
  J_E = \big\{i \in \{1,\dots,n\} \mid i \in E, i+1\not\in E \big\}\,.
\]
The next lemma now follows immediately from unpacking definitions.

\begin{lemma}\label{lem:IRank-characterization}
  Let $E \subseteq \{1,\dots,n\}$.
  An interlacing pair $(\cC,\cD)$ satisfies $\IRank(\cC,\cD) = E$ if and only if
  \[
    I_E = \Rank(\cC) \setminus \big(\Rank(\cC)\cap\Rank(\cD)\big), \quad J_E = \Rank(\cD) \setminus \big(\Rank(\cC)\cap\Rank(\cD)\big)\,.
  \]
\end{lemma}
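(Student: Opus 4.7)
The plan is to verify both directions by a direct translation between the interval structure of $E$ and the rank data of the interlacing pair $(\cC,\cD)$.

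The starting observation is that $\IRank(\cC,\cD)=\bigcup_{j=1}^k \{\rank(\cC_j)+1,\dots,\rank(\cD_j)\}$ is a union of half-open intervals, and two adjacent summands $j,j+1$ merge into a single maximal run of $E$ precisely when $\rank(\cD_j)=\rank(\cC_{j+1})$. Since $\cD_j\leq\cC_{j+1}$ by interlacing and two comparable elements of equal rank coincide in a graded poset, this forces $\cD_j=\cC_{j+1}$. The same graded-poset fact also implies that an equality $\rank(\cD_l)=\rank(\cC_j)$ can occur only for $l\in\{j-1,j\}$, with $\cD_l=\cC_j$ in either case; a summand is empty exactly when $\cC_j=\cD_j$.

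For the $(\Rightarrow)$ direction, with the conventions $\cD_0:=\hat 0$ and $\cC_{k+1}:=\hat 1$, these observations give $\rank(\cC_j)\in\Rank(\cC)\cap\Rank(\cD)$ iff $\cD_{j-1}=\cC_j$ or $\cD_j=\cC_j$, so $r\in\Rank(\cC)\setminus\Rank(\cD)$ iff $r=\rank(\cC_j)$ with $\cD_{j-1}<\cC_j<\cD_j$ strictly — and this is exactly the condition that $r\notin E$ while $r+1\in E$, i.e.\ $r\in I_E$. The symmetric argument, with the roles of $\cC$ and $\cD$ swapped, yields $\Rank(\cD)\setminus\Rank(\cC)=J_E$. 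The $(\Leftarrow)$ direction follows by reading the same chain of biconditionals backwards: the two set equalities pin down the positions of the starts and ends of the maximal runs of $E$, hence determine $E$ itself.

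The main technical obstacle is the bookkeeping at the boundary ranks $0$ and $n$, where intervals of $E$ may abut the minimum or maximum of $P$; one must reconcile the conventions $\cD_0=\hat 0$ and $\cC_{k+1}=\hat 1$ with the restriction $I_E,J_E\subseteq\{1,\dots,n\}$ from the definition. Once those edge cases are treated uniformly, the proof reduces, as the authors indicate, to unpacking the definitions.
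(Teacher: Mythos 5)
Your approach is the same one the paper gestures at (the paper offers no proof, stating the lemma follows immediately from unpacking definitions), and the structural observations you isolate are exactly the crux: in a graded poset, comparable elements of equal rank coincide, so the only rank coincidences available in an interlacing pair are $\cD_{j-1}=\cC_j$, $\cC_j=\cD_j$, or $\cD_j=\cC_{j+1}$, and $\IRank(\cC,\cD)$ is the disjoint union of the integer intervals $\{\rank(\cC_j)+1,\dots,\rank(\cD_j)\}$, adjacent ones merging precisely when $\cD_j=\cC_{j+1}$.

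The ``boundary bookkeeping'' you flag, however, is not something to be reconciled away with a convention: it is an off-by-one in the stated index range of $I_E$. Take $\cC=\{\hat 0\}$ and $\cD=\multichain{\cD_1}$ with $\rank(\cD_1)=1$. Then $E=\IRank(\cC,\cD)=\{1\}$ and $\Rank(\cC)\setminus\Rank(\cD)=\{0\}$, yet with $I_E$ restricted to $\{1,\dots,n\}$ one gets $I_E=\emptyset$, contradicting the displayed equality. The intended range for $I_E$ is $i\in\{0,\dots,n-1\}$ (note that $J_E$'s range $\{1,\dots,n\}$ is correct as stated); this is consistent with the description of $T=\Rank\big(\cC_{\TT{\cM}{E}}\big)\subseteq\{0,\dots,n-1\}$ in the proof of \Cref{prop:topPIEStat}, where the first of the three constituent sets is exactly $I_E$ with $i$ ranging over $\{0,\dots,n-1\}$. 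With that correction your chain of biconditionals goes through directly and symmetrically: $\rank(\cC_1)\notin E$ is automatic, and no convention $\cD_0:=\hat 0$ is needed. In fact that convention, as you use it, would ``prove'' the lemma only by silently enlarging $\Rank(\cD)$, which the paper defines from $\cD_1,\dots,\cD_k$ alone; the honest fix is to the range of $I_E$, not to $\Rank(\cD)$. Finally, your one-line treatment of the $(\Leftarrow)$ direction is fine: since $(I_E,J_E)$ records the left and right ends of the maximal runs of $E$, the pair determines $E$, so the forward implication applied to $E'=\IRank(\cC,\cD)$ forces $E'=E$.
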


For a maximal chain~$\cM$, a set~$E\subseteq \{1,\dots,n\}$, and a subset $R\subseteq \{0,\dots,n\} \setminus I_E$, set
\begin{equation}\label{eqn:C_R-D_R}
  \cC_R = \{ \cM_i \mid i \in I_E \cup R\}\qquad \text{and}\qquad
  \cD_R = \multichain{\cM_i \mid i \in J_E \sqcup R}\,,
\end{equation}
where $J_E \sqcup R$ is a \emph{multiset} union. For a maximal chain $\cM$ in
$P$, define
\[ 
  \Int_{\cM,E} = \big\{ (\cC,\cD) \mid \cD \in \Int(\cC),\ \IRank(\cC,\cD) = E,\ \cM \in \incdec(\cC, \cD) \big\}\, .
\]
Recall from \Cref{sec:inclusion-exclusion} that the set $\PIE_\ell(S)$ consists of triples $(\cC,\cD,\cM)$ for interlacing pairs $(\cC,\cD)$ such that $\Rank(\cC) = S$ and~$\cM$ is alternating with respect to $(\cC,\cD)$.
Our next step is to use $\cC_R$ and $\cD_R$ to rewrite $\PIE_\ell(S)$.
To that end, we identify the subsets $R \subseteq\{0,\dots,n\}$ for which a given maximal chain $\cM$ is alternating with respect to $(\cC_R,\cD_R)$.

\begin{lemma}
  \label{prop:Tdescription}
  Let~$\cM$ be a maximal chain, and let $E \subseteq \{1,\dots,n\}$.
  Given the monomial $\StatMon(\cM) = u_1 \cdots u_n$, let $\TT{\cM}{E}$ be the set of
  indices $i \in \{0,\dots,n-1\}$ given by
  \[
    \TT{\cM}{E} = \big\{ i \mid i, i+1 \in E,\ u_{i+1} = \aaa\big\} \cup \big\{ i \mid i, i+1 \notin E,\ u_{i+1} = \bbb\big\}\,.
  \]
  Then we have 
  \[
    \Int_{\cM,E} = \big\{ (\cC_R,\cD_R) \mid \TT{\cM}{E} \subseteq R \subseteq \{0,\dots,n\} \setminus I_E \big\}\,.
  \]
\end{lemma}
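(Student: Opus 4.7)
The plan is to reduce the desired equality to a purely local constraint on the label pattern $\StatMon(\cM) = u_1\cdots u_n$. By \Cref{lem:IRank-characterization}, reading $\Rank(\cD)$ as a multiset so that the multiplicity-$2$ entries of $\cD_R$ correspond precisely to $R \cap J_E$, the assignment $R \mapsto (\cC_R,\cD_R)$ is a bijection between subsets $R \subseteq \{0,\dots,n\} \setminus I_E$ and interlacing pairs $(\cC,\cD)$ with $\IRank(\cC,\cD) = E$. It therefore suffices to determine, for each such $R$, whether $\cM \in \incdec(\cC_R,\cD_R)$.

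Next I would unpack the block structure of the interlacing attached to $R$. I partition $E$ into its maximal runs of consecutive integers (the \emph{E-blocks}) and $\{0,\dots,n\} \setminus E$ similarly (the \emph{non-E-blocks}). Cutting each E-block at the elements of $R$ lying strictly inside yields the decreasing intervals $[\cC_l,\cD_l]$ of the interlacing, whose outer endpoints lie in $I_E \cup J_E$; cutting each non-E-block analogously yields the weakly-increasing intervals $[\cD_l,\cC_{l+1}]$, together with the two boundary intervals $[\hat 0,\cC_1]$ and $[\cD_k,\hat 1]$.

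The key observation is that the incdec condition constrains $u_{i+1}$ only at \emph{interior} positions of a sub-block, never at block junctions or break points. Explicitly, a decreasing sub-block $[\cM_a,\cM_b]$ forces $u_{a+2} = \cdots = u_b = \bbb$, and each constrained $i \in \{a+1,\dots,b-1\}$ satisfies $i, i+1 \in E$ and $i \notin R$; a weakly-increasing sub-block forces $u_{i+1} = \aaa$ for every $i$ with $i, i+1 \notin E$ and $i \notin R$. Taking contrapositives, $\cM \in \incdec(\cC_R,\cD_R)$ is equivalent to the statement that every $i \in \TT{\cM}{E}$ lies in $R$, which is exactly $\TT{\cM}{E} \subseteq R$.

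The main delicacy is the bookkeeping at junction and break positions $i \in (I_E \cup J_E \cup R) \cap \{0,\dots,n-1\}$: at each such $i$ the chain $\cM$ sits at the common endpoint of two adjacent but independently constrained sub-blocks, so the corresponding $u_{i+1}$ is genuinely unconstrained. Once this local analysis is carried out -- together with the boundary cases $i = 0$ (where $u_1 = \aaa$ is fixed and $0 \notin \TT{\cM}{E}$ automatically, since $0 \notin E$) and the possibility that a block extends all the way to $\hat 0$ or $\hat 1$ -- the claimed set equality follows immediately from the bijection.
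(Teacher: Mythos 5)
Your proposal is correct and takes essentially the same approach as the paper's proof: reduce via \Cref{lem:IRank-characterization} to identifying which subsets $R$ satisfy $\cM \in \incdec(\cC_R,\cD_R)$, and then translate the increasing/decreasing conditions into local constraints on $\StatMon(\cM) = u_1\cdots u_n$, observing that these constraints apply precisely at positions $i$ with $i,i+1$ on the same side of $E$ and $i \notin R$. Your version is somewhat more detailed than the paper's---which only explicitly works out the direction $\TT{\cM}{E} \not\subseteq R \Rightarrow \cM \notin \incdec(\cC_R,\cD_R)$ and leaves the converse implicit---since you spell out the block decomposition and the unconstrained boundary/junction positions, but the underlying argument is the same.
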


\begin{proof}
  By \Cref{lem:IRank-characterization}, an interlacing pair $(\cC, \cD)$
  satisfies $\IRank(\cC, \cD) = E$ if and only if there exists $R\subseteq
  \{0,\dots, n\}\setminus I_E$ such that $(\cC, \cD)= (\cC_R, \cD_R)$. Thus, it
  suffices to identify those sets~$R$ for which the given maximal chain~$\cM$ is
  alternating with respect to~$(\cC_R,\cD_R)$. That is, $\lambda$
  decreases along all intervals of $\cM$ of the form $[\cC_i,\cD_i]$ and weakly
  increases along intervals of the form $[\hat{0},\cC_1]$, $[\cD_i,\cC_i]$ and
  $[\cD_i,\hat{1}]$, where $\cC=\cC_R$ and $\cD=\cD_R$. Recall that
  $u_{i+1}=\aaa$ if $\lambda(\cM_{i-1}, \cM_{i})\leq \lambda(\cM_{i},
  \cM_{i+1})$ and $u_{i+1}=\bbb$ otherwise. Let $i\in \TT{\cM}{E}$, and assume
  $i\notin R$.
  If $i,i+1 \in E$, then $\lambda(\cM_{i-1}, \cM_{i})\leq
  \lambda(\cM_{i}, \cM_{i+1})$, so $\cM$ is not decreasing along
  $[\cC_i,\cD_i]$.
  The case where $i,i+1 \notin E$ is analogous.
  Hence,~$\cM\in \incdec(\cC_R,\cD_R)$ if and only if $\TT{\cM}{E} \subseteq R$,
  so the lemma follows.
\end{proof}

\begin{example}
  Recall the poset $\cL$ from \Cref{ex:face-poset2}. We give, for each maximal
  chain in $\cL$ and each subset of $\{0,1,2\}$, the unique pair of interlacing
  chains satisfying \Cref{eq:uniquecond}.
  \begin{center}
    \begin{tabular}{|c|c|c|c|}
    \hline
    & & &\\[-10pt]
    & $\hat 0 \underset{\color{magenta}1}{\lessdot} \alpha_1 \underset{\color{magenta}2}{\lessdot} \hat 1$
    & $\hat 0 \underset{\color{magenta}2}{\lessdot} \alpha_2 \underset{\color{magenta}1}{\lessdot} \hat 1$ &
    $\hat 0 \underset{\color{magenta}3}{\lessdot} \alpha_3 \underset{\color{magenta}1}{\lessdot} \hat 1$ \\[5pt]
    \hline
    & & & \\[-10pt]
    $\{\}$        & $\{\},\{\}$ & $\{\alpha_2\},\{\alpha_2\}$ & $\{\alpha_3\},\{\alpha_3\}$ \\
    $\{ 0 \}$     & $\{\hat{0}\},\{\alpha_1\}$ & $\{\hat{0}\},\{\alpha_2\}$ & $\{\hat{0}\},\{\alpha_3\}$ \\
    $\{ 1 \}$     & $\{\alpha_1\},\{\hat{1}\}$ & $\{\alpha_2\},\{\hat{1}\}$ & $\{\alpha_3\},\{\hat{1}\}$ \\
    $\{ 0,1 \}$   & $\{\hat{0}<\alpha_1\},\{\alpha_1<\hat{1}\}$ & $\{\hat{0}\},\{\hat{1}\}$ & $\{\hat{0}\},\{\hat{1}\}$ \\
    \hline
    \end{tabular}
  \end{center}
\end{example}

The next step is to translate \Cref{prop:Tdescription} into a statement about $\PIE_\ell(S)$, which we then use to simplify the description of $\topPIE_\ell(T)$.

\begin{proposition}
\label{prop:PIEdescription}
  Let $S\subseteq\{0,\dots,n-1\}$ and $\ell\geq 0$.
  We have the following decomposition of $\PIE_\ell(S)$ into disjoint subsets
  \[
    \PIE_\ell(S) = \bigcup_{(\cM,E)} \big\{ (\cC,\cD,\cM) \mid (\cC,\cD) \in \Int_{\cM,E} \text{ and } \Rank(\cC) = S \big\}\,,
  \]
  where the disjoint union ranges over all maximal chains~$\cM$ and all subsets $E \subseteq \{1,\dots,n\}$ of cardinality $\#E = \ell$.
  Moreover, 
  \[
    \topPIE_{\ell}(T) = \left\{ \left(\cC_{\TT{\cM}{E}},\cD_{\TT{\cM}{E}},\cM\right) ~\middle|~ \begin{array}{c} \cM \text{ maximal chain},\ E \subseteq \{1,\dots,n\}, \\ \#E = \ell,\ \Rank(\cC_{\TT{\cM}{E}}) = T \end{array} \right\}\,.
  \]
\end{proposition}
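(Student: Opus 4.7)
The plan is to deduce both statements from the fibration $(\cC,\cD,\cM)\mapsto(\cM,\IRank(\cC,\cD))$ together with~\Cref{prop:Tdescription}. For the first statement, any triple $(\cC,\cD,\cM)\in\PIE_\ell(S)$ determines a unique pair $(\cM,E)$ with $E:=\IRank(\cC,\cD)$ and $\#E=\irank(\cC,\cD)=\ell$; unwinding the definition of $\Int_{\cM,E}$, the fiber of this map over $(\cM,E)$ is exactly $\{(\cC,\cD,\cM)\mid(\cC,\cD)\in\Int_{\cM,E},\ \Rank(\cC)=S\}$, giving the displayed disjoint decomposition.

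Combining the first statement with~\Cref{prop:Tdescription} parametrizes each element of $\PIE_\ell(T)$ uniquely as $(\cC_R,\cD_R,\cM)$ where $\cM$ is a maximal chain, $E\subseteq\{1,\dots,n\}$ has $\#E=\ell$ and $I_E\subseteq T$, and $R=T\setminus I_E$ contains $\TT{\cM}{E}$. The second statement then reduces to showing that $(\cC_R,\cD_R,\cM)\in\topPIE_\ell(T)$ if and only if $R=\TT{\cM}{E}$. The forward direction is constructive: if $R\supsetneq\TT{\cM}{E}$, pick $r\in R\setminus\TT{\cM}{E}$ and put $R':=R\setminus\{r\}$, $S:=T\setminus\{r\}\subsetneq T$. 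Since $R'\supseteq\TT{\cM}{E}$, \Cref{prop:Tdescription} gives $(\cC_{R'},\cD_{R'},\cM)\in\PIE_\ell(S)$, and a short check from the definitions of $\cC_R,\cD_R$ (with multiset union for $\cD_R$) and of $\varphi_{S,T}$ yields $\varphi_{S,T}(\cC_{R'},\cD_{R'},\cM)=(\cC_R,\cD_R,\cM)$.

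For the reverse direction, assume $R=\TT{\cM}{E}$ and suppose for contradiction that $(\cC_R,\cD_R,\cM)=\varphi_{S,T}(\cC'',\cD'',\cM)$ for some $(\cC'',\cD'',\cM)\in\PIE_\ell(S)$ with $S\subsetneq T$. Applying the first statement and~\Cref{prop:Tdescription} to this triple yields an $E''$ with $\#E''=\ell$ and an $R''\supseteq\TT{\cM}{E''}$ such that $(\cC'',\cD'')=(\cC_{R''},\cD_{R''})$ with respect to $E''$. The definitions of $I_E$, $J_E$, and $\TT{\cM}{E}$ immediately force $I_E\cap\TT{\cM}{E}=J_E\cap\TT{\cM}{E}=\emptyset$, so under the hypothesis $R=\TT{\cM}{E}$ one obtains the disjointness $R\cap I_E=R\cap J_E=I_E\cap J_E=\emptyset$. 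Using $T\setminus S\subseteq R$ and $(T\setminus S)\cap J_E=\emptyset$, one then computes $\Rank(\cC'')=I_E\cup(R\setminus(T\setminus S))$ and $\Rank(\cD'')=J_E\cup(R\setminus(T\setminus S))$ as sets, so~\Cref{lem:IRank-characterization} gives $I_{E''}=I_E$ and $J_{E''}=J_E$. These data determine $E$ (the number of maximal intervals of $E$ equals $|J_E|$, and $|J_E|-|I_E|$ is $1$ or $0$ according to whether $1\in E$), so $E''=E$ and hence $\TT{\cM}{E''}=R$; but $R''=S\setminus I_{E''}=S\setminus I_E\subsetneq R$ then contradicts $R''\supseteq R$. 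The main obstacle is this last calculation, and the key observation enabling it is the disjointness $R\cap J_E=\emptyset$, which holds precisely because $R=\TT{\cM}{E}$.
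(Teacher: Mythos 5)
Your proof is correct and follows the same structural skeleton as the paper's: the first claim is the fiber decomposition along $(\cC,\cD,\cM)\mapsto(\cM,\IRank(\cC,\cD))$, and the second claim uses \Cref{prop:Tdescription} to parametrize $\PIE_\ell(T)$ by $(\cC_R,\cD_R,\cM)$ and identifies $\topPIE_\ell(T)$ with the case $R=\TT{\cM}{E}$. Where you differ is in the level of detail for the second claim: the paper states the conclusion essentially in a single sentence after invoking \Cref{prop:Tdescription}, implicitly relying on the observation that $\varphi_{S,T}$ preserves $\IRank$ (adding $\cM_r$ to both $\cC$ and $\cD$ inserts a trivial pair $(\cM_r,\cM_r)$ that contributes nothing to $\IRank$), so that the preimage lies in the same $\idInt_{\cM,E}$-fiber with a strictly smaller $R$. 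You instead prove $E''=E$ by explicitly computing $\Rank(\cC'')$ and $\Rank(\cD'')$, verifying the needed disjointnesses $R\cap I_E=R\cap J_E=I_E\cap J_E=\emptyset$, and invoking \Cref{lem:IRank-characterization} to pin down $I_{E''}$, $J_{E''}$, and hence $E''$. Both routes work; the paper's implicit $\IRank$-preservation observation is shorter, while your computation has the virtue of being entirely mechanical and of making transparent exactly where the hypothesis $R=\TT{\cM}{E}$ (equivalently $R\cap J_E=\emptyset$) is used. One small point worth being explicit about: the identity $T\setminus S\subseteq R$ that you assert does require a line of justification (it follows from $T\setminus S\subseteq\Rank(\cC_R)=I_E\cup R$ together with $T\setminus S$ lying in the multiset support $J_E\cup R$ of $\cD_R$, and $I_E\cap J_E=\emptyset$), but the check is straightforward.
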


\begin{proof}
  Let~$\cM$ be a maximal chain, $E\subseteq\{1,\dots,n\}$ of cardinality $\# E = \ell$, and $S \subseteq \{0,\dots,n-1\}$.
  We then write
  \[
    \idInt_{\cM,E}(S) = \big\{ (\cC,\cD,\cM) \mid (\cC,\cD) \in \Int_{\cM,E} \text{ and } \Rank(\cC) = S \big\}\,,
  \]
  and observe that both $\idInt_{\cM,E}(S) \subseteq \PIE_\ell(S)$ and $\idInt_{\cM,E}(S) \cap \idInt_{\cM,E'}(S) = \emptyset$ for different subsets $E$ and $E'$ of $\{1,\dots,n\}$.
  Moreover, for $(\cC,\cD,\cM)\in \PIE_\ell(S)$ with $E = \IRank(\cC,\cD)$, we have that $(\cC,\cD,\cM) \in \idInt_{\cM,E}$.
  This proves the first claim.

  The set $\topPIE_\ell(T)$ comprises the elements in $\PIE_\ell(T)$ that do not appear in $\PIE_\ell(S)$ for any proper subset $S\subsetneq T$.
  From \Cref{prop:Tdescription}, it follows that
  \[
    \idInt_{\cM,E}(S) = \big\{ (\cC_R,\cD_R,\cM) \mid \TT{\cM}{E} \subseteq R \subseteq \{0,\dots,n\} \setminus I_E \text{ and } \Rank(\cC_R) = S\big\}\,.
  \]
  The elements in $\PIE_\ell(T)$ not in $\PIE_\ell(S)$ for $S\subsetneq T$ are, thus, exactly the elements of the form $(\cC_R,\cD_R,\cM)$ for some maximal chain~$\cM$ and some subset $E \subseteq \{1,\dots,n\}$ such that $\Rank(\cC_R) = T$ and $R = \TT{\cM}{E}$.
  This yields the proposed description of~$\topPIE_{\ell}(T)$.
\end{proof}

We finally describe the cardinality of the set $\topPIE_\ell(T)$ in terms of the statistic $\StatMon(\cM,E)$ given in \Cref{sec:mainresults}.
Recall for a subset $T \subseteq \{0,\dots,n-1\}$, the monomial $\StatMon_T = w_0\dots w_{n-1}$ in $\aaa,\bbb$ is given by $w_i = \bbb$ if $i \in T$ and $w_i = \aaa$ if $i \notin T$.

\begin{corollary}
\label{prop:topPIEStat}
  Let $\ell \geq 0$, and let $T \subseteq \{0,1,\dots,n-1\}$.
  Then $\#\topPIE_\ell(T)$ is the number of pairs $(\cM,E)$ of a maximal chain~$\cM$ and a subset $E \subseteq \{1,\dots,n\}$ of cardinality $\#E = \ell$ such that $\StatMon(\cM,E) = \StatMon_T$.
\end{corollary}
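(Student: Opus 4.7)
The plan is to deduce the corollary from \Cref{prop:PIEdescription}, which already exhibits $\topPIE_\ell(T)$ as the image of the assignment $(\cM,E) \mapsto (\cC_{\TT{\cM}{E}}, \cD_{\TT{\cM}{E}}, \cM)$ over pairs $(\cM,E)$ with $\#E = \ell$ and $\Rank(\cC_{\TT{\cM}{E}}) = T$. This assignment is injective: the third entry recovers $\cM$ directly, while $E = \IRank(\cC,\cD)$ is determined by the interlacing pair. Hence, to count $\topPIE_\ell(T)$ as claimed, it suffices to prove that for every pair $(\cM, E)$,
\[
  \Rank(\cC_{\TT{\cM}{E}}) = T \quad \Longleftrightarrow \quad \StatMon(\cM, E) = \StatMon_T.
\]

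Unwinding $\cC_{\TT{\cM}{E}} = \{\cM_i \mid i \in I_E \cup \TT{\cM}{E}\}$ together with the convention $\StatMon_T = w_0\cdots w_{n-1}$ (where $w_i = \bbb$ iff $i \in T$), this equivalence reduces to the set equality
\[
  I_E \cup \TT{\cM}{E} = \big\{i \in \{0,\dots,n-1\} \mid v_{i+1} = \bbb\big\},
\]
where $\StatMon(\cM,E) = v_1\cdots v_n$. I would verify this position by position, through a four-case analysis on the pair $(i \in E,\ i{+}1 \in E)$. If $i \notin E$ and $i{+}1 \in E$, then $i \in I_E$ and the substitution rule forces $v_{i+1} = \bbb$ regardless of $u_{i+1}$. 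If $i,\, i{+}1 \notin E$, neither substitution triggers, so $v_{i+1} = u_{i+1}$, and the condition $i \in \TT{\cM}{E}$ is precisely $u_{i+1} = \bbb$. The remaining two cases ($i, i{+}1 \in E$; and $i \in E$ with $i{+}1 \notin E$) are symmetric: $i \notin I_E$ in both, and one checks directly that the membership condition defining $\TT{\cM}{E}$ matches the output of the substitution rule for $v_{i+1}$.

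The main bookkeeping concern will be the boundary position $i = 0$: since $0 \notin E$ by convention, $u_1 = \aaa$ by definition, and $v_1 = \bbb \Leftrightarrow 1 \in E$ by \Cref{eq:firstindex}, the special first-position rule is consistent with Cases~2 and~3 of the general analysis. Beyond this, the argument is a mechanical unwinding of the definitions of $I_E$, $\TT{\cM}{E}$, and the substitution producing $\StatMon(\cM,E)$ from $\StatMon(\cM)$.
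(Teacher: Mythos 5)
Your argument reproduces the paper's proof: starting from \Cref{prop:PIEdescription}, you observe the parametrizing map $(\cM,E) \mapsto (\cC_{\TT{\cM}{E}}, \cD_{\TT{\cM}{E}}, \cM)$ is injective (a point the paper leaves implicit) and then check position by position that $\Rank(\cC_{\TT{\cM}{E}}) = T$ if and only if $\StatMon(\cM,E) = \StatMon_T$; your four-case split on $(i \in E,\ i{+}1 \in E)$ is exactly the comparison the paper makes between its displayed description of $T$ and its six-case description of $\StatMon(\cM,E)$. The only quibble is cosmetic: at the boundary $i = 0$, the two applicable cases in the order you describe them are the first and second (both with $i \notin E$), not ``Cases~2 and~3.''
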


\begin{proof}
  \Cref{prop:PIEdescription} shows that for every pair $(\cM,E)$, the triple $(\cC_{R},\cD_{R},\cM)$ is contained in $\topPIE_{\#E}(T)$ for $T = \Rank(\cC_{R})$ and $R=\TT{\cM}{E}$.
  The definition of $\cC_{R}$ yields that~$T$ is the set of positions in $\{0,\dots,n-1\}$ given by
  \[
    \big\{ i \mid i \notin E,\ i+1 \in E \big\}
       \cup \big\{ i \mid i, i+1 \in E,\ u_{i+1} = \aaa\big\} 
       \cup \big\{ i \mid i, i+1 \notin E,\ u_{i+1} = \bbb\big\}\,,
  \]
  where $\StatMon(\cM) = \StatMon(\cM,\emptyset) = u_1\cdots u_n$.
   The description of $\StatMon(\cM,E)$ in \Cref{sec:mainresults} can be easily seen to be equivalent to $\StatMon(\cM,E) = v_1\dots v_n$ with
   \begin{equation*}
     \begin{array}{ll}
             v_{i+1} = \aaa &\text{ if } \begin{cases}
                                 i\in E,\quad i+1\notin E, \quad \text{or} \\
                                 i \notin E, \quad i+1\notin E, \quad u_{i+1} = \aaa \quad \text{ or} \\
                                 i\in E, \quad i+1\in E, \quad u_{i+1} = \bbb\,,
                               \end{cases} \\ \\
             v_{i+1} = \bbb &\text{ if } \begin{cases}
                                 i\notin E,\quad i+1\in E, \quad \text{or} \\
                                 i \in E, \quad i+1\in E, \quad u_{i+1} = \aaa \quad \text{ or} \\
                                 i\notin E, \quad i+1\not\in E, \quad u_{i+1} = \bbb
                               \end{cases}
     \end{array}
   \end{equation*}
   for~$i \in \{0,\dots,n-1\}$.
   Comparing these two descriptions of~$T$ and of $\StatMon(\cM,E)$ proves that $\StatMon_T = \StatMon(\cM,E)$.
\end{proof}

\begin{proof}[Proof of~\Cref{thm:combinatorial-interp}]
  Combining \Cref{prop:topPIEStat} and \Cref{thm:toppiecount} shows that the coefficient of $y^\ell \StatMon_T$ in $\extPsi(P;y,\aaa,\bbb)$ equals
  \[
      \#\big\{ (\cM,E) \mid \cM \text{ maximal chain},\ E \subseteq \{1,\dots,n\},\ \#E = \ell,\ \StatMon(\cM,E) =  \StatMon_T \big\}\,.
  \]
  In other words, we have
  \[
    \extPsi(P;y,\aaa,\bbb) = \sum_{(\cM,E)} y^{\#E}\cdot\StatMon(\cM,E)\,,
  \]
  where the sum ranges over all maximal chains~$\cM$ and all subsets $E \subseteq \{1,\dots,n\}$.
\end{proof}

\begin{proof}[Proof of~\Cref{thm:symmetry-of-coeffs}]
  For a given chain $\cM$, exchanging $E\subseteq \{1,\dots,n\}$ with its complement $E^c = \{1,\dots,n\}\setminus E$ also exchanges roles of $\aaa$ and $\bbb$ in $\StatMon(\cM,E) = u_1\dots u_n$ to $\StatMon(\cM,E^c) = u^c_1\dots u^c_n$, meaning that $\{u_i,u_i^c\} = \{\aaa,\bbb\}$ for every position~$i$.
  The result now follows from \Cref{thm:combinatorial-interp}.
\end{proof}

\begin{proof}[Proof of~\Cref{cor:enab0}]
  Setting
  \[
    \extPsi^\circ(P;y,\aaa,\bbb) = \sum_{\cC\text{ chain in } P\setminus\{\hat 0,\hat 1\}} \POIN[\{\hat 0\} \cup \cC]{P}{y} \cdot \wtp_\cC\ \in \N[y]\langle \aaa,\bbb\rangle \,,
  \]
  we show that
  \begin{equation}
    \extPsi^\circ(P;y,\aaa,\bbb) = \iota\big(\extPsi(P;y,\aaa,\bbb)\big)\,.
    \label{eq:iotaextpsi}
  \end{equation}
  All previously given arguments remain valid when only considering those sets $\PIE_\ell(S)$ and $\topPIE_\ell(T)$ that contain~$0$.
  For $T \subseteq \{1,\dots,n-1\}$, we set
  \begin{equation}
    \topPIE^\circ_\ell(T) = \PIE_\ell(T\cup\{0\}) \big\backslash \bigcup_{S\subsetneq T} \varphi_{S,T}\big(\PIE_\ell(S\cup\{0\})\big)\,.
    \label{eq:toppiecirc}
  \end{equation}
  It is immediate that the analogue of \Cref{thm:toppiecount} holds, meaning that
  \begin{equation}
    [y^\ell \mon_T]\ \extPsi^\circ(P;y,\aaa,\bbb) = \# \topPIE^\circ_\ell(T)
    \label{eq:extpsicirccoeff}
  \end{equation}
  for $\mon_T = m_1\cdots m_{n-1}$ with $m_i = \bbb$ if $i \in T$ and $m_i = \aaa$ if $i \notin T$.
  Also the argument for \Cref{prop:topPIEStat} remains valid when considering $\mon_T = m_1\cdots m_{n-1}$ instead of $m_0\cdots m_{n-1}$.
  We get that $\#\topPIE^\circ_\ell(T)$ equals the number of pairs $(\cM,E)$ of a maximal chain~$\cM$ and a subset $E \subseteq \{1,\dots,n\}$ of cardinality $\#E = \ell$ such that $\iota(\StatMon(\cM,E)) = \mon_T$.
  This implies \Cref{eq:iotaextpsi}, finishing the proof of \Cref{cor:enab0}.
\end{proof}

We finish this section with a proof of \Cref{thm:lower-bound-on-coefficients}.
That is, we show that
\begin{equation}\label{eqn:LB}
  [t^k]\ \Num(P;1,t) \geq \binom{n-1}{k}\cdot \POIN{P}{1}\,.
\end{equation}
By \Cref{thm:maglione-voll-conjecture} and \Cref{eq:poinfromnum}, we have
\[
  \Num(P;1,t) = \extPsi^\circ(P;1,1,t)\quad\text{and}\quad
  \Poin(P;1)  = \Num(P;1,0) = \extPsi^\circ(P;1,1,0)
\]
for $\extPsi^\circ(P;y,\aaa,\bbb) = \iota\big(\extPsi(P;y,\aaa,\bbb)\big)$ as given in \Cref{eq:iotaextpsi}.
Together with \Cref{eq:extpsicirccoeff}, \Cref{eqn:LB} is equivalent to 
\begin{equation}
  \sum_T \sum_{\ell\geq 0} \#\topPIE^{\circ}_\ell(T)
  \geq
  \binom{n-1}{k}\cdot\sum_{\ell \geq 0} \#\topPIE^{\circ}_\ell(\emptyset)\,,
  \label{eq:boundingsets}
\end{equation}
where the leftmost sum runs over all subsets $T \subseteq \{1,\dots,n-1\}$ of cardinality~$k$.

\begin{lemma}\label{lem:injective}
  Let $T \subseteq \{1,\dots,n-1\}$. Then 
  \[ 
    \sum_{\ell\geq 0} \#\topPIE^{\circ}_\ell(T) \geq \sum_{\ell\geq 0}\# \topPIE^{\circ}_\ell(\emptyset).
  \]
\end{lemma}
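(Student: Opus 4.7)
The plan is to build, for every maximal chain $\cM$ of $P$, an injection between the two sets
\[
  \Omega_S(\cM) = \big\{ E \subseteq \{1,\dots,n\} : \iota\big(\StatMon(\cM, E)\big) = \StatMon_S \big\}
\]
for $S = \emptyset$ and $S = T$. The analogue of~\Cref{prop:topPIEStat} established in the proof of~\Cref{cor:enab0}, combined with~\Cref{eq:extpsicirccoeff} and summed over $\ell$, gives
\[
  \sum_{\ell \geq 0} \#\topPIE^\circ_\ell(S) \;=\; \sum_{\cM} \#\Omega_S(\cM)\,,
\]
the outer sum ranging over maximal chains $\cM$ of $P$. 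Taking the disjoint union of the local injections will therefore yield the lemma.

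Fix $\cM$ and write $\StatMon(\cM) = u_1 \cdots u_n$. Reading off~\Cref{eq:statmon}, for each $j \in \{2, \dots, n\}$ the letter $v_j$ of $\StatMon(\cM, E) = v_1 \cdots v_n$ depends on the membership in $E$ of a single index: namely $v_j = \bbb \Leftrightarrow j \in E$ when $u_j = \aaa$, and $v_j = \bbb \Leftrightarrow j-1 \notin E$ when $u_j = \bbb$. Identifying $\aaa \leftrightarrow 0$ and $\bbb \leftrightarrow 1$, the assignment $E \mapsto (v_2,\dots,v_n)$ is therefore an $\mathbb{F}_2$-affine map $\mathbb{F}_2^n \to \mathbb{F}_2^{n-1}$. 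Its image is the affine subspace cut out by the relations $v_i + v_{i+1} = 1$ as $i$ ranges over those indices in $\{2,\dots,n-1\}$ with $u_i = \aaa$ and $u_{i+1} = \bbb$ (for such an $i$, the element $i \in E$ controls both $v_i$ and $v_{i+1}$ in opposite directions), and every nonempty fiber of the map has the same cardinality.

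I then observe that the word $\StatMon_\emptyset = \aaa^{n-1}$ violates every relation of the above form, and therefore lies in the image precisely when no such $i$ exists---equivalently, when $u_2 \cdots u_n$ has no adjacent pattern $\aaa\bbb$. In this case the image is all of $\{\aaa,\bbb\}^{n-1}$, so it also contains $\StatMon_T$, giving $\#\Omega_\emptyset(\cM) = \#\Omega_T(\cM)$. A concrete local bijection is then given by $E \mapsto E \triangle D_\cM$ for any fixed $D_\cM \subseteq \{1,\dots,n\}$ whose translation sends a chosen preimage of $\StatMon_\emptyset$ to one of $\StatMon_T$; its existence is guaranteed by the fact that $\StatMon_T - \StatMon_\emptyset$ lies in the image of the linear part of the map. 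When instead $\Omega_\emptyset(\cM) = \emptyset$, the required injection is vacuous. The disjoint union of these local injections over $\cM$ provides the global injection and proves the lemma.

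The main obstacle is the initial conceptual step of recognizing \Cref{eq:statmon} as defining an $\mathbb{F}_2$-affine function of the indicator vector of $E$; once that viewpoint is adopted, the inequality reduces to a routine comparison of affine-subspace sizes and a standard fact about equal fiber cardinalities.
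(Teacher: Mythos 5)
Your proof is correct, but it takes a genuinely different route from the paper. Both arguments ultimately compare the two unions chain-by-chain (the paper's injection also fixes $\cM$, sending $(\{\cM_0\},\multichain{\cM_\ell},\cM)\mapsto(\cC_{\cM,T},\cD_{\cM,T,\ell},\cM)$), but the paper works directly with the triples $(\cC,\cD,\cM)$: it characterizes $\topPIE^{\circ}_\ell(\emptyset)$ via the unique dec-inc decomposition of $\cM$, constructs an explicit interlacing pair, and checks injectivity by hand. You instead pass to the equivalent $(\cM,E)$-description of $\sum_\ell\#\topPIE^\circ_\ell$ obtained in the proof of \Cref{cor:enab0}, fix $\cM$, and observe that $E \mapsto \iota\big(\StatMon(\cM,E)\big)$ is an affine map over $\mathbb{F}_2$ whose fibers are cosets of a fixed subspace; the inequality then drops out of the fact that the target $\aaa^{n-1}$ lies in the image only when the map is surjective. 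One small point worth making explicit: you verify that the relations $v_i + v_{i+1}=1$ (for $i$ with $u_i=\aaa$, $u_{i+1}=\bbb$, $2\le i\le n-1$) are \emph{necessary}, but the claim that they \emph{cut out} the image also needs that they are sufficient. This follows from a routine dimension count: each output $v_j$ depends on exactly one input index, no two consecutive $i$ can both contribute a relation (since $u_{i+1}$ cannot be both $\aaa$ and $\bbb$), so the $k$ relations are independent and the image has dimension exactly $(n-1)-k$. Your approach is slicker once the $(\cM,E)$-description is in hand, and it actually yields the sharper chain-local statement that $\#\Omega_\emptyset(\cM)\in\{0,\#\Omega_T(\cM)\}$; the paper's approach avoids the $\mathbb{F}_2$ reformulation and produces an explicit combinatorial injection at the level of interlacing pairs.
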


\begin{proof}
  It is clearly sufficient to construct an injection
  \[
    \bigcup_{\ell\geq0}\topPIE^{\circ}_\ell(\emptyset) \hookrightarrow
      \bigcup_{\ell\geq0}\topPIE^{\circ}_\ell(T)\,.
  \]
  Fix $\ell\geq0$.
  Then $(\cC,\cD,\cM) \in \topPIE^{\circ}_\ell(\emptyset)$ if and only if $\cC = \{\cM_0\}$, $\cD = \multichain{\cM_\ell}$, and~$\cM$ is decreasing along
  the interval $[\cM_0,\cM_{\ell}]$ and weakly increasing along the interval
  $[\cM_{\ell},\cM_n]$.
  Now map $(\cC,\cD,\cM)$ to $(\cC_{\cM,T},\cD_{\cM,T,\ell},\cM)$.
  Here, $\cC_{\cM,T} = \{ \cM_0 \} \cup \{ \cM_i \mid i \in T \}$,
  \[
    \cD'_{\cM,T,\ell} = \multichain{ \cM_\ell} \sqcup \multichain{ \cM_{i-1} \mid i \in T, i < \ell} \sqcup \multichain{ \cM_{i+1} \mid i \in T, i > \ell}
  \]
  and $\cD_{\cM,T,\ell} = \cD'_{\cM,T,\ell}$ if $\ell \notin T$.
  If $\ell \in T$,
  \[
    \cD_{\cM,T,\ell} = \begin{cases}
            \cD'_{\cM,T,\ell} \sqcup \multichain{\cM_{\ell-1}} &\text{if } \lambda(\cM_{\ell-1},\cM_\ell) > \lambda(\cM_{\ell},\cM_{\ell+1})\,, \\
            \cD'_{\cM,T,\ell} \sqcup \multichain{\cM_{\ell+1}} &\text{if }\lambda(\cM_{\ell-1},\cM_\ell) \leq \lambda(\cM_{\ell},\cM_{\ell+1})\,. \\
          \end{cases}
  \]
  By construction, we have that $\Rank(\cC_{\cM, T}) = \{0\} \cup T$ and moreover that
  \[
    (\cC_{\cM,T},\cD_{\cM,T,\ell},\cM) \in \topPIE^{\circ}_{\ell'}(T)
  \]
  for some parameter~$\ell'$.
  This shows that the map is well-defined.
  It remains to show it is also injective.
  First note that if~$\cM$ is a maximal chain which first decreases and then only weakly increases, then there are exactly two interlacing pairs $(\cC_1,\cD_1),(\cC_2,\cD_2)$ and exactly two values of $\ell_1,\ell_2\geq 0$ such that $(\cC_i,\cD_i,\cM) \in \topPIE^{\circ}_{\ell_i}(\emptyset)$.
  Moreover,~$\ell_1$ and~$\ell_2$ differ by $1$ meaning that $\{\ell_1,\ell_2\} = \{\ell,\ell+1\}$ for some $\ell\geq 0$ and we have
  \[
    \big(\{\cM_0\},\multichain{\cM_\ell},\cM\big) \in \topPIE^{\circ}_\ell(\emptyset),\quad \big(\{\cM_0\},\multichain{\cM_{\ell+1}},\cM\big) \in \topPIE^{\circ}_{\ell+1}(\emptyset)\,.
  \]
  Injectivity follows from $\cD_{\cM,T,\ell} \neq \cD_{\cM,T,\ell+1}$ which is immediate from its definition.
\end{proof}

\begin{proof}[Proof of \Cref{thm:lower-bound-on-coefficients}]
  This follows from \Cref{eq:boundingsets} and \Cref{lem:injective}.
\end{proof}

\section{Extending the $\aaa\bbb$-index}
\label{sec:refinement-of-ber}

In this section we prove~\Cref{thm:refinement-of-ber,,cor:peakenumeration}.
The proof comes from first proving a special case of~\Cref{thm:refinement-of-ber} and then showing how to extend this to the general setting.

\medskip
As before, we fix an $R$-labeled poset~$P$ of rank~$n$ together with an $R$-labeling $\lambda$.
Recall that $\omega(\mon)$ is obtained from the monomial~$\mon$ in $\aaa,\bbb$ by substituting all occurrences of $\aaa\bbb$ with $\aaa\bbb + y\bbb\aaa + y\aaa\bbb + y^2\bbb\aaa$ and then simultaneously substituting all remaining occurrences of $\aaa$ with $\aaa+y\bbb$ and all occurrences of $\bbb$ with $\bbb+y\aaa$.
The main ingredient in the proof of the corollary is the following proposition.
Also recall the definition of $\StatMon(\cM) = \StatMon(\cM,\emptyset)$ given in \Cref{eq:statmon}.

\begin{proposition}
\label{eq:extending-equality}
  Let~$\cM$ be a maximal chain in~$P$.
  Then
  \[
    \omega\big(\StatMon(\cM)\big) = \sum_{E\subseteq\{1,\dots,n\}} y^{\#E}\cdot \StatMon(\cM,E)\,.
  \]
\end{proposition}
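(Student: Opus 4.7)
The approach is to show that both sides factor identically over a common block decomposition of $\StatMon(\cM) = u_1\cdots u_n$. Partition the positions $\{1,\dots,n\}$ by grouping each consecutive pair $(j,j+1)$ with $u_j=\aaa$, $u_{j+1}=\bbb$ into an \emph{$\aaa\bbb$-pair} and leaving every other position as a \emph{singleton}. No position can be both the $\aaa$-half of one pair and the $\bbb$-half of another, so this partition is unique. By the very definition of $\omega$, the left-hand side becomes the noncommutative product, in position order, of the block factors $\omega(\aaa\bbb)=(1{+}y)(\aaa\bbb+y\bbb\aaa)$, $\omega(\aaa)=\aaa+y\bbb$, and $\omega(\bbb)=\bbb+y\aaa$.

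To match this on the right, write $e_j=[j\in E]$ and $\StatMon(\cM,E)=v_1\cdots v_n$. Unpacking the definition of $\StatMon(\cM,E)$ from \Cref{sec:mainresults}, $v_i$ depends on exactly one indicator: on $e_i$ when $u_i=\aaa$, and on $e_{i-1}$ when $u_i=\bbb$. I would then assign each $e$-variable to a block as follows: to the pair $(j,j+1)$ assign $e_j$; to a singleton $\aaa$ at $j$ assign $e_j$; and to a singleton $\bbb$ at $j$ assign $e_{j-1}$ (well-defined since $u_1=\aaa$ forces $j\geq 2$ with $u_{j-1}=\bbb$). The variables left unassigned are precisely the \emph{free} ones, namely those $e_j$ with $u_j=\bbb$ and either $j=n$ or $u_{j+1}=\aaa$; by construction these affect no $v_i$. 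A short count on $\aaa\to\bbb$ versus $\bbb\to\aaa$ transitions in $u_1\cdots u_n$ (using $u_1=\aaa$) shows that the number of free variables equals the number of $\aaa\bbb$-pairs.

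With this bookkeeping, the sum factors into a noncommutative product of block contributions times $(1{+}y)$ for each free variable. Summing $y^{e}\cdot v$ over the associated $e$-variable produces $\aaa\bbb+y\bbb\aaa$ for an $\aaa\bbb$-pair, $\aaa+y\bbb$ for a singleton $\aaa$, and $\bbb+y\aaa$ for a singleton $\bbb$. Pairing each of the $(1{+}y)$ free-variable factors with a distinct $\aaa\bbb$-pair contribution yields $(1{+}y)(\aaa\bbb+y\bbb\aaa)=\omega(\aaa\bbb)$, while the singleton contributions are already $\omega(\aaa)$ and $\omega(\bbb)$. This matches the left-hand side block-by-block.

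The main obstacle is the combinatorial bookkeeping: verifying that (i) the assignment of $e$-variables to blocks is injective with image exactly the non-free variables, and (ii) the number of free variables equals the number of $\aaa\bbb$-pairs, so that the $(1{+}y)$ factors can be absorbed one-to-one into the pair contributions to reconstruct $\omega(\aaa\bbb)$. Once these checks are in place, the noncommutative factorization automatically respects position order because the blocks are consecutive and non-overlapping.
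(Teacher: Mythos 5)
Your proof is correct, and it takes a genuinely different route from the paper's. The paper decomposes $\StatMon(\cM)$ into \emph{maximal} blocks of the form $\aaa\bbb^j$ (the ``decomposing chain''), shows $\omega$ factors over this decomposition (\Cref{lem:dijon-subsets}), and then proves the identity for each $\aaa\bbb^j$ block directly (\Cref{lem:omeggs-over-easy}) by exhibiting an explicit bijection between subsets $E\subseteq\{1,\dots,j+1\}$ and the $2^{j+1}$ terms in the expansion of $(\aaa\bbb+y\bbb\aaa+y\aaa\bbb+y^2\bbb\aaa)(\bbb+y\aaa)^{j-1}$; in particular the ``wraparound'' dependence of $s_1$ on both $e_1$ and $e_{j+1}$ is handled inside that bijection. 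You instead refine to the finest possible decomposition, into singletons and disjoint $\aaa\bbb$-pairs, observe that each $v_i$ depends on exactly one indicator $e_j$, and factor the sum directly; the price is that some $e_j$ are ``free'' and contribute a scalar $(1+y)$ each, and you need the extra counting argument (a transition count from $u_1=\aaa$) that the number of free indicators equals the number of $\aaa\bbb$-pairs, so that each $(1+y)$ can be absorbed to reconstruct $\omega(\aaa\bbb)=(1+y)(\aaa\bbb+y\bbb\aaa)$. Both routes work; the paper's has a cleaner per-block statement that is quoted once and for all, while yours avoids the bijective bookkeeping inside $\aaa\bbb^j$ at the cost of the free-variable count, which is an elegant global observation. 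Your accounting of which $e_j$ are assigned, which are free, and why the assignment is injective is correct, and the noncommutative factorization does respect position order since your blocks are consecutive and non-overlapping, so there is no gap.
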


Before proving the proposition, we show how to deduce the first of the two corollaries.

\begin{proof}[Proof of \Cref{thm:refinement-of-ber}]
  \Cref{thm:combinatorial-interp} gives that
  \[
    \extPsi(P;y,\aaa,\bbb) = \sum_{\cM,E} y^{\#E}\cdot\StatMon(\cM,E)\,,
  \]
  and in particular,
  \[
    \Psi(P;\aaa,\bbb) = \extPsi(P;0,\aaa,\bbb) = \sum_{\cM} \StatMon(\cM)\,.
  \]
  By \Cref{eq:extending-equality}, we have 
  \[
    \extPsi(P;y,\aaa,\bbb) = \sum_{\cM} \omega\big(\StatMon(\cM)\big) = \omega\left(\sum_{\cM} \StatMon(\cM)\right) = \omega\big(\Psi(P;\aaa,\bbb)\big)\,. \qedhere
  \]
\end{proof}

For a maximal chain $\cM$, we have that $\StatMon(\cM) = m_1\dots m_{n}$ starts with $m_1 = \aaa$, compare \Cref{eq:firstindex}.
For this reason, $\StatMon(\cM)$ can be decomposed into a product of monomials of the form $\aaa \bbb^j$ for some $j \geq 0$.
This decomposition of $\StatMon(\cM)$ induces a unique decomposition of $\cM$ into intervals $[X_i,X_{i+1}]$ such that the restriction $\cM_{[X_i,X_{i+1}]}$ of $\cM$ to the interval $[X_i,X_{i+1}]$ satisfies
\[
  \StatMon(\cM_{[X_i,X_{i+1}]}) = \aaa\bbb^j
\]
for some $j\geq 0$.
We call $\{\hat{0} = X_1 < \cdots < X_k = \hat{1}\}$ the \Dfn{decomposing chain} of~$\cM$.
The following lemma shows how we can use decomposing chains to reduce the proof of~\Cref{eq:extending-equality} to a special case.

\begin{lemma}
\label{lem:dijon-subsets}
  For a maximal chain~$\cM$ with decomposing chain $\{\hat{0} = X_1 < \cdots < X_k = \hat{1}\}$ as above, we have
  \[
    \omega\big(\StatMon(\cM)\big) = \prod_{i = 1}^{k - 1} \omega\big(\StatMon(\cM_{[X_i,X_{i+1}]})\big).
  \]
\end{lemma}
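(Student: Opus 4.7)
The plan is to exploit the block structure of $\StatMon(\cM)$ imposed by the decomposing chain, and to observe that the substitution $\omega$ respects this structure block-by-block.

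First, I would unpack the definition of the decomposing chain. By construction, $\StatMon(\cM)$ factors as
\[
  \StatMon(\cM) \,=\, \prod_{i=1}^{k-1}\aaa\bbb^{j_i}
\]
for some non-negative integers $j_1,\dots,j_{k-1}$. Since the restriction $\cM_{[X_i,X_{i+1}]}$ inherits its $R$-labels from $\cM$ and the first letter of any $\StatMon$-monomial is $\aaa$ by convention (compare \Cref{eq:firstindex}), a direct check shows that $\StatMon(\cM_{[X_i,X_{i+1}]}) = \aaa\bbb^{j_i}$ as well. This identifies the right-hand side of the lemma with $\prod_{i=1}^{k-1}\omega(\aaa\bbb^{j_i})$, so it suffices to show that $\omega(\StatMon(\cM))$ equals this product.

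Next, I would verify that $\omega$ factors over this block decomposition. The substitution $\omega$ proceeds in two stages. The crucial observation for the first stage is that the $\aaa\bbb$-substrings cannot cross a block boundary: every block $\aaa\bbb^{j_i}$ begins with the letter $\aaa$, so no pair of consecutive letters spanning two blocks can form the pattern $\aaa\bbb$ (this would require $\bbb$ at the start of a block). Consequently, each block contains at most one $\aaa\bbb$-substring, located at its first two positions whenever $j_i \geq 1$, and the first stage of $\omega$ acts independently within each block. The second stage, which simultaneously substitutes $\aaa \mapsto \aaa + y\bbb$ and $\bbb \mapsto \bbb + y\aaa$, is an algebra homomorphism of $\Z[y]\langle\aaa,\bbb\rangle$ and so distributes over the product automatically. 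Combining both stages yields the desired identity.

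The main (and minor) obstacle is being careful about the meaning of ``occurrences of $\aaa\bbb$'' in the definition of $\omega$; however, since any two $\aaa\bbb$-substrings in a word are automatically non-overlapping (sharing a position would force $\aaa=\bbb$), the set of occurrences is unambiguous, and the block-locality argument above then becomes routine.
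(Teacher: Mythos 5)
Your proof is correct and follows the paper's own argument, which is a one-line invocation of precisely your key observation: no occurrence of $\aaa\bbb$ in $\StatMon(\cM)$ can straddle a block boundary, since each block $\aaa\bbb^{j_i}$ begins with $\aaa$. One caution on phrasing, though. You describe the second stage of $\omega$ as an algebra homomorphism of $\Z[y]\langle\aaa,\bbb\rangle$ applied to the first stage's output. Taken literally this is not how $\omega$ works and would give the wrong answer: a genuine homomorphism would also act on the $\aaa$'s and $\bbb$'s introduced by the first-stage replacement, so already $\omega(\aaa\bbb)$ would come out wrong. What the definition prescribes is that the second stage substitutes only those letters of the \emph{original} monomial that were not absorbed into a marked $\aaa\bbb$-pair, leaving the first stage's output untouched. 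The multiplicativity you want is cleanest to see by viewing $\omega$ as a single left-to-right pass over the word, emitting $\aaa\bbb + y\bbb\aaa + y\aaa\bbb + y^2\bbb\aaa$ for each marked $\aaa\bbb$-pair and $\aaa + y\bbb$ or $\bbb + y\aaa$ for each leftover letter; since your observation shows both the marked pairs and the leftover letters are partitioned by the blocks, $\omega$ factors over the block product. With that minor repair, the argument is exactly the paper's.
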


\begin{proof}
  This follows from the definition of the substitution~$\omega$ and the fact that each factor~$\aaa\bbb$ in $\StatMon(\cM)$ appears in one of the intervals of the decomposition.
\end{proof}

We are now ready to prove~\Cref{eq:extending-equality} on each of the intervals given by the decomposing chain.
From there, we then use \Cref{lem:dijon-subsets} to recover $\omega\big(\StatMon(\cM)\big)$.

\begin{lemma}\label{lem:omeggs-over-easy}
  Suppose $\StatMon(\cM) = \aaa \bbb^j$ for some $j\geq 0$.
  Then
  \[
    \omega\big(\StatMon(\cM)\big) = \sum_{E\subseteq\{1,\dots,j+1\}} y^{\#E}\cdot \StatMon(\cM,E)\,.
  \]
\end{lemma}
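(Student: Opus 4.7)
The plan is to expand both sides explicitly and match them as products of noncommutative monomials. Since $\StatMon(\cM) = \aaa\bbb^j$ contains exactly one occurrence of $\aaa\bbb$ (at positions $1,2$) when $j \ge 1$ and no $\aaa\bbb$ at all when $j = 0$, the substitution $\omega$ is easy to compute directly:
\begin{align*}
\omega(\aaa) &= \aaa + y\bbb, \\
\omega(\aaa\bbb^j) &= \bigl(\aaa\bbb + y\bbb\aaa + y\aaa\bbb + y^2\bbb\aaa\bigr)(\bbb + y\aaa)^{j-1} \quad (j\ge 1).
\end{align*}
A quick calculation shows $\aaa\bbb + y\bbb\aaa + y\aaa\bbb + y^2\bbb\aaa = (1+y)(\aaa\bbb + y\bbb\aaa)$, so for $j \ge 1$ the left-hand side factors as
\[
\omega(\aaa\bbb^j) = (1+y)(\aaa\bbb + y\bbb\aaa)(\bbb + y\aaa)^{j-1}.
\]

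Next I would analyze the right-hand side. For $\StatMon(\cM) = u_1\cdots u_{j+1}$ with $u_1 = \aaa$ and $u_2=\cdots=u_{j+1} = \bbb$, the definition of $\StatMon(\cM, E) = v_1\cdots v_{j+1}$ gives
\begin{align*}
v_1 &= \begin{cases} \aaa & 1\notin E \\ \bbb & 1\in E \end{cases}, & v_i &= \begin{cases} \bbb & i-1\notin E \\ \aaa & i-1\in E \end{cases} \quad (i\ge 2).
\end{align*}
The key observation is that membership of the position $j+1$ in $E$ does not affect any $v_i$ (it would only influence a non-existent $v_{j+2}$). Summing over whether $j+1 \in E$ therefore yields a multiplicative factor $(1+y)$, reducing the RHS to
\[
(1+y)\!\!\sum_{E\subseteq\{1,\dots,j\}}\!\! y^{\#E}\,\StatMon(\cM,E).
\]

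The remaining sum factors cleanly because the element $1 \in E$ affects positions $1$ and $2$ together, while each element $i \in \{2,\dots,j\}$ affects only position $i+1$. Explicitly, summing over $1\in E$ or not produces the first-two-position factor $\aaa\bbb + y\bbb\aaa$, and for each $i\in\{2,\dots,j\}$ summing over $i\in E$ or not produces $\bbb + y\aaa$ at position $i+1$. The result is
\[
\sum_{E\subseteq\{1,\dots,j\}} y^{\#E}\,\StatMon(\cM,E) = (\aaa\bbb + y\bbb\aaa)(\bbb + y\aaa)^{j-1}
\]
when $j \ge 1$, which matches the factorization of $\omega(\aaa\bbb^j)$ obtained above. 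The case $j = 0$ is handled separately by direct comparison: both sides equal $\aaa + y\bbb$.

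The main obstacle is essentially bookkeeping: one must carefully track that the $E$-dependence of each $v_i$ is governed by a single position of $E$, so that the sum factors position-by-position. Once this is set up correctly, the identification of the two sides is immediate from the factorization $\aaa\bbb + y\bbb\aaa + y\aaa\bbb + y^2\bbb\aaa = (1+y)(\aaa\bbb + y\bbb\aaa)$ and the observation that $j+1 \in E$ is ``free.''
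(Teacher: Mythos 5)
Your proof is correct and follows essentially the same strategy as the paper's: expand $\omega(\aaa\bbb^j) = (\aaa\bbb + y\bbb\aaa + y\aaa\bbb + y^2\bbb\aaa)(\bbb + y\aaa)^{j-1}$ and match the $2^{j+1}$ summands with the subsets $E \subseteq \{1,\dots,j+1\}$. Your small reorganization---factoring out $(1+y)$ and observing that $j+1 \in E$ is a \emph{free} choice, so the remaining sum factors position-by-position---makes the bijection a bit more transparent than the paper's direct term-by-term description, but it is the same underlying argument.
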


\begin{proof}
  For $j = 0$, we have $\omega\big(\StatMon(\cM)\big) = \aaa + y\bbb$, as desired.
  For $j\geq 1$, we have
  \begin{equation}
  \label{eqn:cracking-omeggs}
    \omega\big(\StatMon(\cM)\big) = (\aaa\bbb + y\bbb\aaa + y\aaa\bbb + y^2\bbb\aaa)(\bbb + y\aaa)^{j-1}.
  \end{equation}
  There are $4\cdot 2^{j-1} = 2^{j+1}$ terms in the expansion.
  For each $E \subseteq \{1,\dots, j+1\}$, we explicitly describe a unique summand $\mathsf{s}_E$ of the expansion of the right side of \Cref{eqn:cracking-omeggs}.
  Fix $E\subseteq \{1,\dots, j+1\}$ and write $\mathsf{s}_E = s_1\cdots s_j$.
  For $i\in \{2,\dots, j\}$, we set $s_i$ to $y\bbb$ if $i\in E$ and otherwise
  to $\aaa$.
  Lastly, we set 
  \[ 
    s_1 = \begin{cases} 
      \aaa\bbb & \text{if }  1\notin E,\ j+1 \notin E, \\
      y\bbb\aaa & \text{if } 1\in E,\ j+1 \notin E, \\
      y\aaa\bbb & \text{if } 1\notin E,\ j+1 \in E, \\
      y^2\bbb\aaa & \text{if } 1 \in E,\ j+1 \in E. 
    \end{cases} 
  \] 

  It is clear that $\mathsf{s}_E$ is a summand in (the expansion of)
  \Cref{eqn:cracking-omeggs} since~$E$ encodes exactly how to select terms in the~$j$ different factors.
  Moreover, $\mathsf{s}_E = y^{\# E} \cdot \StatMon(\cM, E)$.
\end{proof}

\begin{proof}[Proof of~\Cref{eq:extending-equality}]
  Let $\{\hat{0} = X_1 < \cdots < X_k = \hat{1}\}$ be the decomposing chain for~$\cM$.
  For $i\in \{1,\dots, k-1\}$, set $\mathsf{IR}_i = \{\rank(X_i)+1,\dots,\rank(X_{i+1})\}$.
  If $E_i\subseteq \mathsf{IR}_i$ for each $i\in \{1,\dots, k-1\}$ and $E=E_1\cup \cdots \cup E_{k-1}$, then 
  \[
    \StatMon(\cM,E) = \StatMon(\cM_{[X_1,X_{2}]},E_1)\cdots\StatMon(\cM_{[X_{k-1},X_{k}]},E_{j-1}).
  \]
  Together with \Cref{lem:omeggs-over-easy,,lem:dijon-subsets}, this gives
  \begin{align*}
    \omega\big(\StatMon(\cM)\big) & = \prod_{i = 1}^{j - 1} \omega\big(\StatMon(\cM_{[X_i,X_{i+1}]})\big)\\
    & = \prod_{i = 1}^{j - 1} \left(\sum_{E_i\subseteq\mathsf{IR}_i} y^{\#E_i}\cdot \StatMon\big(\cM_{[X_i,X_{i+1}]},E_i\big) \right)\\
    & = \sum_{E\subseteq\{1,\dots,n\}} y^{\#E}\cdot \StatMon(\cM,E)\,. \qedhere
  \end{align*}
\end{proof}

The proof of the \Cref{cor:peakenumeration} uses a similar product decomposition, but is less involved.
The crucial step is to relate the substitution~$\omega$ with the \Dfn{peak enumerator}
\begin{equation}
\label{eq:thetadef}
  \theta(\cM;y) = \sum_{S} (1+y)^{\# S}\cdot y^{\bout(\cM,S)} \cdot \wt_S
\end{equation}
where the sum ranges over all $\cM$-peak-covering subsets $S \subseteq \{1,\dots,n\}$, where $\wt_S = w_1\dots w_n$ is given in \Cref{eqn:ext-weights}, and where $\bout(\cM,S)$ is the number of positions $i \notin S$ for which $u_i = \bbb$ in $\StatMon(\cM) = u_1\dots u_n$.
In particular, \Cref{cor:peakenumeration} is an immediate consequence of the following proposition.

\begin{proposition}
\label{eq:peakenumeration}
  Let $\cM$ be a maximal chain in~$P$.
  Then
  \[
    \omega\big(\StatMon(\cM)\big) = \theta(\cM;y)\,.
  \]
\end{proposition}

\begin{proof}
  First we check the proposed equality for the three special cases $\StatMon(\cM) \in \{\aaa, \bbb, \aaa\bbb\}$.
\begin{itemize}
  \item[] If $\StatMon(\cM) = \aaa$, we have $\omega(\aaa) = \aaa + y\bbb = \underbrace{(\aaa-\bbb)}_{S = \emptyset} + \underbrace{(1+y) \cdot \bbb}_{S = \{1\}}$.
  \item[] If $\StatMon(\cM) = \bbb$, we have $\omega(\aaa) = y\aaa + \bbb = \underbrace{y(\aaa-\bbb)}_{S = \emptyset} + \underbrace{(1+y) \cdot \bbb}_{S = \{1\}}$.
  \item[] If $\StatMon(\cM) = \aaa\bbb$, we have
  \begin{align*}
    \omega(\aaa\bbb)
    &= (1+y)\aaa\bbb + (y+y^2)\bbb\aaa \\
    &= \underbrace{(1+y) \cdot y \cdot \bbb(\aaa-\bbb)}_{S = \{1\}} + \underbrace{(1+y) \cdot (\aaa-\bbb)\bbb}_{S = \{2\}} + \underbrace{(1+y)^2 \cdot\bbb^2}_{S = \{1,2\}}\,.
  \end{align*}
\end{itemize}
A straightforward computation shows that if $\mon_1$ is a monomial of degree $n$ and $\mon_2$ is any monomial satisfying
\[ \peak(\mon_1) \cup \{n + i \mid \peak(\mon_2)\} = \peak(\mon_1 \mon_2),\]
then $\theta(\mon_1)\thinspace \theta(\mon_2) = \theta(\mon_1\mon_2)$ and $\omega(\mon_1)\thinspace \omega(\mon_2) = \omega(\mon_1\mon_2)$.
By decomposing $\StatMon(\cM)$ into a product of monomials of the special forms listed above (taking care that all successive $\aaa\bbb$ pairs stay in the same monomial), we obtain the result.
\end{proof}

\appendix
\section{Proof of \Cref{conj:schurpos} (by Ricky Ini Liu)}\label{sec:appendix}

In this section, we give a proof of \Cref{conj:schurpos} by relating the map $\omega$ to the internal (Kronecker) coproduct on quasisymmetric and symmetric functions.

\subsection{Internal coproduct}
Given two (ordered) sets of variables $\mathbf x = \{x_1, x_2, \dots\}$ and $\mathbf y = \{y_1, y_2, \dots\}$, denote by $\mathbf{xy} = \{x_1y_1, x_1y_2, \dots, x_2y_1, x_2y_2, \dots\}$ the set of pairwise products $x_iy_j$ in lexicographic order. Then for any quasisymmetric function $f \in \QSym$, we can decompose the power series $f(\mathbf{xy}) \in \QQ[[\mathbf{xy}]] \subseteq \QQ[[\mathbf x]] \otimes \QQ[[\mathbf y]]$ into a sum of the form
\[f(\mathbf{xy}) = \sum_i g_i(\mathbf x)h_i(\mathbf y),\]
where $g_i, h_i \in \QSym$.
The \Dfn{internal coproduct} (or \Dfn{Kronecker coproduct}) of $\QSym$ is the map $\Delta\colon \QSym \to \QSym \otimes \QSym$ given by
\[\Delta(f) = \sum_i g_i \otimes h_i.\]
(See, for instance, \cite{gessel} for more discussion.)

The following result due to Gessel expresses the coproduct in terms of the fundamental basis $F_S$. (Recall that for a permutation $\pi \in S_n$, $\des(\pi) = \{i \mid \pi(i) > \pi(i+1)\}$.)

\begin{proposition}[\!\!{{\cite[Theorem 11]{gessel}}}] \label{prop:qsym-coproduct}
    Let $\pi \in S_n$. Then
    \[\Delta(F_{\des(\pi)}) = \sum_{\tau \sigma = \pi} F_{\des(\sigma)} \otimes F_{\des(\tau)},\]
    where the sum ranges over all pairs of permutations $\sigma,\tau \in S_n$ such that $\tau\sigma = \pi$.
\end{proposition}

The internal coproduct on $\QSym$ restricts to a coproduct on symmetric functions. On a Schur function $s_\lambda$, we have
\[\Delta(s_\lambda) = \sum_{\mu,\nu} g_{\lambda\mu\nu} s_\mu \otimes s_\nu,\]
where $g_{\lambda\mu\nu}$ are the \Dfn{Kronecker coefficients}. One can also define the \Dfn{internal} or \Dfn{Kronecker product} $*$ on symmetric functions in terms of Kronecker coefficients:
\[s_\lambda * s_\mu = \sum_\nu g_{\lambda\mu\nu} s_\nu.\]
It is well known that the Kronecker coefficients $g_{\lambda\mu\nu}$ (which are symmetric in $\lambda$, $\mu$, and $\nu$) are nonnegative integers since they describe the irreducible decomposition of the tensor product of two irreducible representations of $S_n$.

\subsection{Proof of \Cref{conj:schurpos}}
The key lemma is the following description of the map $\omega$ as defined on $\QSym$ in \Cref{subsec:QSYM-connection}. Let $\varphi$ be the linear map on $\QSym$ defined by
\[\varphi(F_S) = \begin{cases} y^k &\text{if $S = \{1, 2, \dots, k\}$ for some $0 \leq k \leq n-1$,}\\ 0&\text{otherwise}.\end{cases}\]
Also recall that for $1 < i < n$, we say $i$ is a \Dfn{peak} of a permutation $\pi \in S_n$ if $\pi(i-1) < \pi(i) > \pi(i+1)$, while $i$ is a \Dfn{valley} if $\pi(i-1) > \pi(i) < \pi(i+1)$.

\begin{lemma} \label{lem:omega-f}
    Let $\pi \in S_n$. Then $\omega = (\mathrm{id} \otimes \varphi) \circ \Delta$, so that
    \[\omega(F_{\des(\pi)}) = \sum_{\tau\sigma = \pi} F_{\des(\sigma)} \cdot \varphi(F_{\des(\tau)}).\]
\end{lemma}
\begin{proof}
    Suppose $\des(\tau) = \{1, \dots, k\}$ for some $0 \leq k \leq n-1$, and let $\sigma = \tau^{-1} \pi$. Each such $\tau$ is determined by the subset $\{\tau(1), \dots, \tau(k)\} \subseteq [n] \setminus \{1\}$, and so the set $E = \{\sigma^{-1}(1), \dots, \sigma^{-1}(k)\}$ can be any of the $2^{n-1}$ subsets of $[n] \setminus \{\pi^{-1}(1)\}$. Then we can describe the transformation \[\omega' \colon \m_{\des(\pi)} \mapsto \sum_{\substack{\tau\sigma = \pi\\\des(\tau) = \{1, \dots, k\}}} y^k \m_{\des(\sigma)}\]
    by considering the effect when each $j \neq \pi^{-1}(1)$ is chosen to be in $E$ or not.
    
    Note that if $i < j$, then $\tau^{-1}(i) > \tau^{-1}(j)$ if and only if $j \in \{\tau(1), \dots, \tau(k)\}$. Then for $\pi(i) < \pi(j)$, we have $\sigma(i) > \sigma(j)$ if and only if $j \in E$.
    \begin{itemize}
        \item Suppose $j$ is a peak of $\pi$. If $j \notin E$, then $j$ is a peak of $\sigma$, while if $j \in E$, then $j$ is a valley of $\sigma$. Thus $\omega'$ replaces each $\mathbf{ab}$ with $\mathbf{ab} + y \mathbf{ba}$.
        \item Suppose $j$ is not a peak of $\pi$ but it is a descent. If $j \notin E$, then $j$ is a descent of $\sigma$, but if $j \in E$, then $j$ is an ascent of $\sigma$. Thus $\omega'$ replaces each remaining $\mathbf{b}$ with $\mathbf b + y\mathbf a$.
        \item Suppose $j$ is not a peak of $\pi$ but $j-1$ is an ascent of $\pi$. If $j \notin E$, then $j-1$ is an ascent of $\sigma$, but if $j \in E$, $j-1$ is a descent of $\sigma$. Thus $\omega'$ replaces each remaining $\mathbf{a}$ with $\mathbf a + y\mathbf b$.
        \item Otherwise, $\pi(j)$ is smaller than both $\pi(j-1)$ and $\pi(j+1)$ (if they exist), and the descents of $\pi$ are unaffected by whether $j \in E$ or not. Thus $\omega'$ multiplies the result by $1+y$ for each $j \neq \pi^{-1}(1)$ that falls in this case. Counting $j = \pi^{-1}(1)$, exactly one such $j$ occurs before, after, and between each peak. Therefore the number of extra factors of $1+y$ equals the number of peaks of $\pi$.
    \end{itemize}
    Comparing to the definition of $\omega$ in \Cref{thm:refinement-of-ber} shows that $\omega' = \omega$ on $\mathbf{ab}$-monomials, from which the result easily follows by \Cref{prop:qsym-coproduct}.
\end{proof}

We are now able to prove the following theorem, which resolves \Cref{conj:schurpos}.
\begin{thm}\label{thm:proof-of-conjecture}
    For any partition $\lambda \vdash n$,
    \[\omega(s_\lambda) = \sum_{k=0}^{n-1} (s_\lambda * s_{(n-k,1^k)}) y^k = \sum_{\mu \vdash n} c^\mu_{\lambda}(y) s_\mu,\] where \[c^\mu_{\lambda}(y) = \sum_{k=0}^{n-1} g_{\lambda,\mu,(n-k,1^k)} y^k \in \NN[y].\]
\end{thm}
\begin{proof}
    When writing the Schur function $s_\nu$ in the fundamental basis, the coefficient of $F_{\{1, 2, \dots, k\}}$ is $1$ if $\nu = (n-k, 1^k)$, and $0$ otherwise. (This is because the unique standard Young tableau of size $n$ with descent set $\{1, 2, \dots, k\}$ has shape $(n-k, 1^k)$, or see \cite[Theorem 3]{gessel}.) It follows from \Cref{lem:omega-f} that
    \begin{align*}
        \omega(s_\lambda) &= (\mathrm{id} \otimes \varphi) \circ \Delta(s_\lambda)\\
        &= \sum_{\mu, \nu \vdash n} g_{\lambda\mu\nu} s_\mu \cdot \varphi(s_\nu)\\
        &= \sum_{\mu \vdash n} \sum_{k=0}^{n-1} g_{\lambda,\mu,(n-k,1^k)}y^k \cdot  s_\mu\\
        &= \sum_{k=0}^{n-1} (s_\lambda * s_{(n-k,1^k)}) y^k.\qedhere
    \end{align*}
\end{proof}

\bibliographystyle{plain}
\bibliography{bibliog}

\end{document}